\def\thesection{\arabic{section}}
\def\theequation{\thesection.\arabic{equation}}
\newcommand{\fa} {\forall}
\newcommand{\ds} {\displaystyle}
\newcommand{\e}{\epsilon}
\newcommand{\pa} {\partial}
\newcommand{\al} {\alpha}
\newcommand{\ba} {\beta}
\newcommand{\de} {\delta}
\newcommand{\ga} {\gamma}
\newcommand{\Om} {\Omega}
\newcommand{\ra} {\rightarrow}
\newcommand{\De} {\Delta}
\newcommand{\la} {\lambda}
\newcommand{\La} {\Lambda}
\newcommand{\noi} {\noindent}
\newcommand{\na} {\nabla}
\newcommand{\oline} {\overline}
\newcommand{\mb} {\mathbb}
\newcommand{\mc} {\mathcal}
\newcommand{\lra} {\longrightarrow}
\newcommand{\ld} {\langle}
\newcommand{\rd} {\rangle}
\def\theequation{\@arabic{\c@section}.\@arabic{\c@equation}}
\def\proof{\noindent{\textbf{Proof. }}}
\def\QED{\hfill {$\square$}\goodbreak \medskip}
\newtheorem{Theorem}{Theorem}[section]
\newtheorem{Lemma}[Theorem]{Lemma}
\newtheorem{Proposition}[Theorem]{Proposition}
\newtheorem{Remark}[Theorem]{Remark}
\newtheorem{Definition}[Theorem]{Definition}
\begin{document}

\title
{Polyharmonic Kirchhoff type equations with singular exponential nonlinearities}

\author{
{\bf Pawan Kumar Mishra\footnote{email:pawanmishra31284@gmail.com,}},\;{\bf   \; Sarika Goyal\footnote{email: sarika1.iitd@gmail.com}}\; and {\bf  K. Sreenadh\footnote{e-mail: sreenadh@gmail.com}}\\
{\small Department of Mathematics}, \\{\small Indian Institute of Technology Delhi}\\
{\small Hauz Khaz}, {\small New Delhi-16, India}\\
 }

\date{}

\maketitle

\begin{abstract}

\noi In this article, we study the existence of
non-negative solutions of the following polyharmonic Kirchhoff type problem with critical singular exponential nolinearity
$$ \quad \left\{
\begin{array}{lr}
 \quad  -M\left(\displaystyle\int_\Omega |\nabla^m u|^{\frac{n}{m}}dx\right)\Delta_{\frac{n}{m}}^{m} u = \frac{f(x,u)}{|x|^\alpha} \; \text{in}\;
\Om{,} \\
 \quad \quad u = \nabla u=\cdot\cdot\cdot= {\nabla}^{m-1} u=0 \quad \text{on} \quad \partial \Om{,}
\end{array}
\right.
$$
where $\Om\subset \mb R^n$ is a bounded domain with smooth boundary, $n\geq 2m\geq 2$ and $f(x,u)$ behaves like $e^{|u|^{\frac{n}{n-m}}}$ as $|u|\ra\infty$. Using mountain pass structure and {the} concentration compactness principle, we show the existence of a nontrivial solution. 
In the later part of the paper, we also discuss the above problem with convex-concave type sign changing nonlinearity. Using {the} Nehari manifold technique, we show the existence and multiplicity of non-negative solutions.
\medskip

\noi \textbf{Key words:} Kirchhoff equation,
Trudinger-Moser-Adams embedding, critical singular exponential growth.

\medskip

\noi \textit{2010 Mathematics Subject Classification:} 35J35, 35J60,
35J92

\end{abstract}

\bigskip
\vfill\eject

\section{Introduction}
\setcounter{equation}{0}
Let $\Omega\subset \mb R^n$ is a bounded domain with smooth boundary, $n,m\in \mathbb N$ with $n\geq 2m\geq 2$. We consider the  following polyharmonic Kirchhoff problem:
$$ \mc{( P)}\quad \left\{
\begin{array}{rllll}
 -M\left(\displaystyle\int_{\Om} |\na^m u|^\frac{n}{m}dx\right)\Delta^m_\frac{n}{m} u&=\frac{f(x,u)}{|x|^\alpha} \; \text{in}\;  \Omega,\\
  u=\nabla u=\cdots \nabla^{m-1}u&=0\; \text{on}\;  \partial \Omega,
\end{array}\right.$$
where
$M:\mb R^+\rightarrow \mb R^+$ is a positive, continuous function satisfying $(m1)-(m4)$ stated below and $f:\oline{\Omega}\times\mb R\rightarrow \mb R$ is a Caratheodory function with critical exponential growth satisfying assumptions $(f1)-(f5)$ as stated in section 2.\\
\noi In section 3, we also consider the following problem with sign changing nonlinearity
$$ (\mathcal P_{\la})\quad \left\{
\begin{array}{rllll}
 \quad  {-M\left(\displaystyle \int_\Omega |\nabla^mu|^\frac{n}{m}dx\right)}\De^{m}_{\frac{n}{m}} u &= \la h(x)|u|^{q-1}u+ u|u|^{p} ~ \frac{e^{|u|^{\ba}}}{|x|^\alpha} \; \text{in}\;
\Om{,} \\
  u=\nabla u=\cdots = {\nabla}^{m-1} u &=0\quad\quad \text{on} \quad \partial \Om{,}
 \end{array}
\right.
$$
 where $1< q<\frac{n}{m}<\frac{2n}{m}< p+2$, $0<\alpha<n$, $\ba\in (1,\frac{n}{n-m}]$ and $\la>0$.
\noi For $u\in C^{m}$, the symbol $\na^m u$, where $m$ is a positive integer, denotes the $m^{\text{th}}$ order gradient of $u$ and is defined as,
$$
\na^{m} u=\begin{cases}
\na\De^{(m-1)/2}u & \mbox{if}\; m\; \mbox{is odd},\\
\De^{\frac{m}{2}}u & \mbox{if}\; m\; \mbox{is even}{,}
\end{cases}
$$
where $\na$ and $\De$ denote the usual gradient and Laplacian operator respectively.
The polyharmonic operator $\De^{m}_{\frac{n}{m}} u$ is defined as
$$
 \quad  \De^{m}_{\frac{n}{m}} u :=
\begin{cases}
-\na\{\De^{j-1}(|\na \De^{j-1}|^{\frac{n}{m}-2}\na\De^{j-1}u)\} & \mbox{if}\;\; m=2j-1\\
\De^{j}(|\De^{j}u|^{\frac{n}{m}-2}\De^{j}u) & \mbox{if}\; m=2j
 \end{cases}
$$
for $j=1,2,3,\cdots$.
We assume the following assumptions on $M(t)$:
\begin{enumerate}
  \item[$(m1)$] There exists $M_0>0$ such that $M(t)\geq M_0$ for all $t\geq 0$ and
  \[ \widehat{M}(t+s)\geq \widehat{M}(t)+\widehat M(s)\;\text{for all}\; s,t\geq 0,\]
  where $\widehat M(t)= \int_{0}^{t} M(s)ds$ is the primitive of $M$ such that $\widehat M(0)=0.$
  \item[$(m2)$] There exist constants $a_1$, $a_2>0$ and $t_0>0$ such that for some $\sigma\in\mb R$
  \[  M(t)\leq a_1 + a_2 t^{\sigma}, \; \text{for all}\; t\geq t_0.\]
  \item[$(m3)$] $\frac{M(t)}{t}$ is nonincreasing for $t>0$.
 \end{enumerate}
The condition $(m1)$ is valid whenever $M(0)=M_0$ and $M$ is nondecreasing. A typical example of a function $M$ satisfying the conditions $(m1)-(m3)$ is $M(t)=M_0+at$, where $a\geq 0$. From $(m3)$, we can easily deduce that
\begin{equation}\label{7a0}
 \frac{m}{n}\widehat M(t)-\frac{m}{2n}M(t)t \;\text{ is nondecreasing for }\; t\ge 0.\end{equation}

\noi The above problems are called non-local because of the presence of the term $M\left(\int_\Om |\na^m u|^\frac{n}{m}dx\right)$. Therefore the equation in $(\mc P)$ and $(\mathcal P_\lambda)$ is no longer a pointwise identity. This phenomenon causes some mathematical difficulties which makes the study
of such class of problem interesting. Basically, the presence of $\int_\Om |\na^m u|^\frac{n}{m} dx$ as the coefficient of $\int_{\Om} |\na^m u|^{\frac{n}{m}-2}\na^m u \na^m \phi~dx$ in the weak formulation requires the strong convergence while taking the limit to obtain the weak solution.

\noi The critical exponent problems with exponential type nonlinearities, motivated by Moser-Trudinger inequality \cite{moser}, in the limiting cases was initially studied by Adimurthi \cite{A} and later by several authors \cite{fmr, panda,PS,DONS}. These problems with singular exponential growth nonlinearities for {Laplacian and $n$-Laplacian} was studied in \cite{AS}, where an interpolation inequality of Hardy and Moser-Trudinger inequality (see \cite{AY} also) is proved for $W^{1,n}_{0}(\Om)$. Subsequently, this inequality is generalized to higher order spaces in \cite{SMA}, known as singular Adam's-Moser inequality:
\begin{Theorem}\label{admo}
 For $0<\alpha<n$ and $\Omega$ be a bounded domain in $\mathbb R^n $. Then for all $0 \leq \nu \leq \beta_{\alpha}=\left(1-\frac{\alpha}{n}\right)\beta_{n,m}$, we have
\begin{align}\label{eqc002}
\displaystyle\sup_{u\in W^{m,\frac{n}{m}}_{0}(\Om),\| u\| \leq 1}\int_{\Om} \frac{e^{\nu |u|^{\frac{n}{n-m}}}}{|x|^\alpha} dx
<\infty{,}
\end{align}
where
$$\ba_{m,n}= \left\{
\begin{array}{lr}
\frac{n}{w_{n-1}}\left(\frac{\pi^{n/2}2^{m}\Gamma(\frac{m+1}{2})}{\Gamma(\frac{n-m+1}{2})}\right)^{\frac{n}{n-m}}\;\mbox{when}\; m \; \mbox{is odd}\\
  \frac{n}{w_{n-1}}\left(\frac{\pi^{n/2}2^{m}\Gamma(\frac{m}{2})}{\Gamma(\frac{n-m}{2})}\right)^{\frac{n}{n-m}}\;\;\;\;\mbox{when}\; m \; \mbox{is even}.
  \end{array}
  \right.
 $$
and $w_{n-1}=$ volume of $\mb S^{n-1}$.
\end{Theorem}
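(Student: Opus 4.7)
The plan is to deduce the singular inequality from the classical (non-singular) Adams inequality for $W^{m,n/m}_0(\Om)$ via a H\"older splitting for the strict subcritical range $\nu < \beta_\alpha$, and then to treat the critical endpoint $\nu = \beta_\alpha$ by a refined local analysis near the singularity at the origin.

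First I would invoke Adams' sharp inequality (the case $\alpha = 0$):
\begin{equation*}
\sup_{u \in W^{m,n/m}_0(\Om),\ \|u\|\leq 1} \int_\Om e^{\beta_{n,m} |u|^{n/(n-m)}}\,dx \leq C|\Om|.
\end{equation*}
For $0 \leq \nu < \beta_\alpha = (1-\alpha/n)\beta_{n,m}$, I would choose $r>1$ with $r\nu < \beta_{n,m}$ and $r'\alpha < n$, where $r'$ is the H\"older conjugate of $r$; such an $r$ exists precisely because $\beta_{n,m}/\nu > n/(n-\alpha)$. H\"older's inequality then gives
\begin{equation*}
\int_\Om \frac{e^{\nu |u|^{n/(n-m)}}}{|x|^\alpha}\,dx \leq \left(\int_\Om e^{r\nu |u|^{n/(n-m)}}\,dx\right)^{1/r} \left(\int_\Om |x|^{-r'\alpha}\,dx\right)^{1/r'},
\end{equation*}
in which the first factor is uniformly bounded on the unit ball of $W^{m,n/m}_0(\Om)$ by classical Adams, and the second is finite because $r'\alpha < n$.

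For the critical endpoint $\nu = \beta_\alpha$ the above H\"older strategy degenerates, since no admissible $r$ exists. I would then split $\Om = B_\delta(0) \cup (\Om \setminus B_\delta(0))$. On the exterior one has $|x|^{-\alpha}\leq \delta^{-\alpha}$ and the classical Adams inequality applies since $\beta_\alpha \leq \beta_{n,m}$. On the interior ball the sharp estimate is obtained by representing $u$ as a Riesz-type convolution of $\na^m u$ against the fundamental solution of the polyharmonic operator $\Delta^m_{n/m}$ and invoking O'Neil's rearrangement inequality for convolutions. This reduces the problem to a one-dimensional estimate on the decreasing rearrangement $u^{\ast}$, and the radial weight $r^{-\alpha}$ can be absorbed into the one-dimensional exponential integral, producing exactly the sharp constant $\beta_\alpha$ along the lines of Adams' original argument but adapted to the singular weight.

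The main obstacle is the endpoint case. Unlike first-order Moser-Trudinger inequalities where Schwarz symmetrization directly reduces matters to the radial setting via Polya-Szego, polyharmonic symmetric rearrangement does \emph{not} preserve the $W^{m,n/m}_0$ norm, so one cannot simply symmetrize. Consequently the sharpness argument must pass through O'Neil's convolution rearrangement, and bookkeeping the sharp constant $\beta_\alpha$ in the presence of the singular weight $|x|^{-\alpha}$ is the delicate point, essentially constituting the refinement in [SMA] over Adams' classical theorem.
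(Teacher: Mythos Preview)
The paper does not give its own proof of this statement; Theorem~\ref{admo} is quoted from \cite{SMA} (Lam--Lu) as a known result and then used as a tool throughout Sections~2 and~3. So there is no ``paper's own proof'' to compare against.

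That said, your outline is essentially the strategy of \cite{SMA} (and, in the first-order case $m=1$, of Adimurthi--Sandeep \cite{AS}). The H\"older splitting for the strict subcritical range $\nu<\beta_\alpha$ is correct and standard; your observation that the admissible window $\nu/\beta_{n,m}<1/r<1-\alpha/n$ collapses exactly at $\nu=\beta_\alpha$ is the right diagnosis of why the endpoint requires more work. For the endpoint, your plan to go through the Riesz-potential representation of $u$ in terms of $\nabla^m u$ and O'Neil's rearrangement inequality is precisely the route taken in \cite{SMA}, and you are right that Schwarz symmetrization is unavailable for $m\geq 2$ since P\'olya--Szeg\H{o} fails for higher-order norms.

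The only point where your sketch is thin is the actual one-dimensional reduction at the endpoint: after O'Neil, one lands on an inequality of Adams--Garsia type with an extra term coming from the weight $|x|^{-\alpha}$, and one must verify that the optimal constant in that one-dimensional lemma is exactly $(1-\alpha/n)\beta_{n,m}$. This is where the work in \cite{SMA} lies, and your proposal acknowledges but does not carry out that computation. If you intend to include a self-contained proof, that step would need to be written out; otherwise, citing \cite{SMA} as the paper does is appropriate.
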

The above inequality is sharp in the sense that if $\nu>\beta_{\alpha}$, then the supremum in \eqref{eqc002} is infinite.
The embedding $W_{0}^{m,\frac{n}{m}}(\Om) \ni u\longmapsto \frac{e^{|u|^{\ba}}}{|x|^{\al}} \in
L^{1}(\Om)$ is compact for all $\ba\in\left(1,\beta_{\al}\right)$
and is continuous for $\ba=\ba_{\al}$. The non-compactness of
the embedding can be shown using a sequence of functions that are obtained from the fundamental solution of $-\De^{m}_{\frac{n}{m}}$. The above embedding in case of the whole space $\mathbb R^n$ is studied {in} \cite{ngf}.
The existence results for quasilinear polyharmonic  problems
with exponential terms on bounded domains is studied in \cite{SMA}.\\

\noi The polyharmonic problems for critical exponents have been studied by many authors, see \cite{flgz,pucciserin,hcgr,YGJ} and references therein. In \cite{pucciserin}, authors have considered the following polyharmonic problems with Sobolev critical exponent in a open ball of $\mathbb R^n$
 \begin{equation*}
 (-\Delta)^Ku=\lambda u+|u|^{s-1}u,\;\; \textrm{in} \; B,\; u=Du=.....D^{K-1}u=0\;\textrm{on}\; \partial B{,}
 \end{equation*}
 where $K-1\in \mathbb N$, $s=\frac{n+2K}{n-2K}$ and $n>2K$. Here authors have shown the existence of nontrivial radial solution for suitable range of $\lambda$. In \cite{flgz}, authors have studied the polyharmonic problems for Sobolev critical growth nonlinearity in a bounded domain and shown the existence of {a} nontrivial solution.
 The existence results for polyharmonic equations with exponential growth nonlinearity have also been discussed by many authors, see \cite{ng, ol, sph, zd} and references therein. \\

\noindent The boundary value problems involving Kirchhoff equations arise in several physical and biological systems. This type of non-local problems were initially observed by Kirchhoff in 1883  in the study of string or membrane vibrations to describe the transversal oscillations of a stretched string by taking into account the subsequent change in string length caused by oscillations.  In the Laplacian case the problem of the above type arises from the theory of thin plates
and describes the deflection of the middle surface of a $p$-power-like elastic isotropic flat plate of uniform thickness, with nonlocal flexural rigidity of
the plate $M(\|u\|^p)$ depending continuously on $\|u\|^p$ of the deflection u and subject to nonlinear source forces.\\

\noi The nonlocal Kirchhoff problems  with Sobolev type critical nonlinearities is initially studied in \cite{ACF}.
In \cite{ACF}, authors considered the following critical Kirchhoff problem
\begin{align*}
-M(\int_\Om |\nabla u|^2dx ) \De u = \la f(x,u) + u^5 \; \text{in}\; \Om, \;\; u>0 \; \text{on}\; \pa \Om,
\end{align*}
where $\Omega \subset \mathbb{R}^3$ is a bounded domain with smooth boundary and $f$ has subcritical growth at $\infty$. Using the mountain-pass lemma and compactness analysis of local Palais-Smale sequences authors showed the existence of solutions for large $\la$. Existence of positive solutions for $p$-Kirchhoff equations with super critical terms is studied in \cite{FG}.  { The critical Kirchhoff problem in bounded domains in $\mathbb{R}^2$ with exponential growth nonlinearity is studied in \cite{gs,Racsam}.\\

\noi The polyharmonic Kirchhoff problems for subcritical exponents are studied in \cite{apuc}. In \cite{apuc}, authors have considered the following Kirchhoff type problem for higher dimensions
\begin{equation*}
M(\|u\|^p)\Delta^K_pu=f(x,u)\;\;\textrm{in}\; \Omega,\;\; D^\alpha u=0 \; \text{on}\; \partial \Omega
\end{equation*} for each multi-index $\alpha$ such that $|\alpha|\leq K-1$ with $f$ having subcritical growth assumptions
 and studied existence of multiple solutions via three
critical points theorem given in \cite{fcppv}.  Here, authors have also extended the results to the $p(x)$-polyharmonic Kirchhoff problems.\\

\noi The multiplicity of positive solutions for elliptic equations was initiated in \cite{TA} and later a global multiplicity result was obtained in \cite{ABC}. Later,  lot of works were devoted to
address the multiplicity results for
quasilinear elliptic problems with positive nonlinearities.
Also,  many authors studied  these multiplicity
results with polynomial type nonlinearity with sign-changing weight
functions using the Nehari manifold and fibering map analysis (see \cite{TA,DP,WU,WU10,COA,KY}). In \cite{CK}, authors studied the existence of multiple positive solution of Kirchhoff type problem with convex-concave polynomial type nonlinearities having subcritical growth by Nehari manifold and fibering map methods. \\

\noindent In the first part of the paper, we discuss the existence result for the polyharmonic Kirchhoff problem  with nonlinearity $f(x,u)$ that has  superlinear growth near zero and exponential growth near $\infty$. To prove our results, we study the first critical level and study the Palais-Smale sequences below this level. Using the concentration compactness principle we show the existence of a subsequence, of Palais-Smale sequences, for which $\{|\na^m u_k|^{\frac{n}{m}-2}\na^m u_k\}$ converges weakly in $ L^{\frac{n}{n-m}}(\Om)$. This combined with singular version of Lion's Lemma on higher integrability is used to show the existence of a nontrivial solution. In the later part of the paper, we discuss existence and multiplicity for sign changing nonlinearity using Nehari manifold and fibering map analysis. { In  the case $M\equiv 1$,  $\alpha=0$,  in \cite{sgana}, authors have considered the critical exponent problem and proved the existence of non-trivial solution. They obtained the solution as weak limit of Palais-Smale sequence below the first critical level. But when $M\not\equiv 1$, weak convergence is not enough to obtain the solution. In order to overcome this, we study the concentration phenomena of Palais-Smale sequences to obtain a convergent subsequence.}

\noi We use the notation $|\na^m u|$ to denote the Euclidean length of the vector $\na^m u$. We will also use the notations $\|u\|_{\frac{n}{m}}$ and $\|u\|$ for the $L^{\frac{n}{m}}(\Om)$  and $W^{m,\frac{n}{m}}(\Om)$ norms of $u$ respectively. Also we have the notation $L^s(\Omega, |x|^{-\alpha}dx)$ for weighted Lebesgue spaces with measure $|x|^{-\alpha}dx$.  The weak convergence is denoted by
$\rightharpoonup$ and $\ra$ denotes strong convergence.

\section{Critical exponent problem with  positive nonlinearity}
\setcounter{equation}{0}
\noi In this section, we consider the problem
$$ \mc{( P)}\quad \left\{
\begin{array}{rllll}
 -M\left(\displaystyle\int_{\Om} |\na^m u|^\frac{n}{m}dx\right)\Delta^m_\frac{n}{m} u&=\frac{f(x,u)}{|x|^\alpha} \; \text{in}\;  \Omega,\\
  u=\nabla u=......\nabla^{m-1}u&=0\; \text{on}\;  \partial \Omega,
\end{array}\right.$$
where $\Omega\subset \mb R^n$ is a bounded domain with smooth boundary {and $n\geq 2m\geq 2, n,m\in \mathbb N$.}
The nonlinearity $f(x,t)=h(x,t) e^{|t|^{n/n-m}}$, where $h(x,t)$ satisfies
\begin{enumerate}
\item[$(f1)$] $h\in C^1(\overline{\Om}\times \mb R)$, $h(x,0)=0,$ for all $t\le 0$, $h(x,t)>0,$ \text{for all} $t>0.$
\item[$(f2)$] For any $\e>0,$ $\ds \lim_{t\ra \infty}\sup_{x\in \overline{\Om}} h(x,t) e^{-\e |t|^{n/n-m} }=0$, $\ds\lim_{t\ra \infty}\inf_{x\in \overline{\Om}} h(x,t) e^{\e|t|^{n/n-m}}=\infty.$
\item[$(f3)$] There exist positive  constants $t_0$, $K_0>0$ such that
\[  F(x,t)\le K_0 f(x,t)\;\mbox{for all}\; (x,t)\in \Om\times[t_0,+\infty),\]
{where $F(x,t)=\int_0^t f(x,s)ds$ is the $t$-primitive of $f(x,t)$.\\}
\item[$(f4)$] For each $\ds x\in \Omega, \frac{f(x,t)}{t^{\frac{2n-m}{m}}}$ is increasing for $t>0$ and $\ds\lim_{t\rightarrow 0^+} \frac{f(x,t)}{t^{\frac{2n-m}{m}}}=0.$
\item[$(f5)$] $\ds \lim_{t\rightarrow \infty} t h(x,t)=\infty.$
\end{enumerate}
\noi Assumption $(f3)$ implies that $\ds  F(x,t)\ge F(x,t_0)e^{K_0(t-t_0)}$, for all $(x,t)\in \Omega \times [t_0, \infty)$ which is a reasonable condition for function behaving as $e^{b |t|^{n/n-m}}$ at $\infty.$ Moreover from $(f3)$ it follows that for each $\theta>0, $ there exists $R_\theta>0$ satisfying
\begin{equation}\label{n7a0}
\theta F(x,t)\le t f(x,t)\; \text{for all}\; (x,t)\in \Om \times [R_\theta, \infty).
\end{equation}
We also have that condition $(f4)$ implies that for $\mu \in [0,\frac{2n-m}{m})$,
\begin{equation}\label{n1}
\lim_{t\rightarrow 0^+} \frac{f(x,t)}{t^\mu}=0, \; \text{uniformly in }\; x\in \Om.
\end{equation}
\noi In this section we define the variational setting for the problem and study the existence of mountain pass solution of $(\mathcal P)$.
\noi Generally, the main difficulty encountered in non-local Kirchhoff problems is the competition between the growths of $M$ and $f$. {Here we generalize the result of \cite{sph} to the polyharmonic Kirchhoff equation with singular exponential growth using singular Adams-Moser inequality \eqref{eqc002}.}
\begin{Definition}
We say that $u\in W^{m,\frac{n}{m}}_{0}(\Om)$ is a weak solution of
$(\mathcal P)$ if for all $\phi \in W^{m,\frac{n}{m}}_{0}(\Om)$, we have
\begin{align*}
M\left(\int_\Omega |\nabla^m u|^{\frac{n}{m}}dx\right)\int_{\Om}|{\na}^{m} u|^{\frac{n}{m}-2}{\na}^{m} u {\na}^{m} \phi ~dx =  \int_{\Om}\frac{f(x,u)}{|x|^\alpha}\phi ~dx.
\end{align*}
\end{Definition}

\noi The energy functional associated with the problem $(\mathcal P)$ is $\mathcal J :W^{m,\frac{n}{m}}_{0}(\Om) \lra \mb{R}$ defined as
\begin{equation}\label{eq1}
\mathcal J(u)= \frac{m}{n}\widehat M\left(\int_\Omega |\nabla^m u|^{\frac{n}{m}}dx\right) -  \int_{\Om}\frac{F(x,u)}{|x|^\alpha}~ dx{,}
\end{equation}
The functional $\mathcal J$ is differentiable on $W^{m,\frac{n}{m}}_{0}(\Om)$ and for all $ \phi\in W^{m,\frac{n}{m}}_{0}(\Om)$,
\[\langle \mathcal J'(u),\phi\rangle= M\left(\int_\Omega |\nabla^m u|^{\frac{n}{m}}dx\right)\int_{\Om}|{\na}^{m} u|^{\frac{n}{m}-2}{\na}^{m} u {\na}^{m} \phi ~dx -  \int_{\Om}\frac{f(x,u)}{|x|^\alpha}\phi ~dx.\]
Now we state the existence result for the problem $(\mathcal P)$ in the following {Theorem}.
\begin{Theorem}\label{main}
If $f$ satisfies $(m1)-(m5)$ and $(f1)-(f5)$ then the problem $(\mathcal P)$ admits a nontrivial solution.
\end{Theorem}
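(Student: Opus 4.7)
The plan is to apply the mountain pass theorem to the functional $\mathcal J$ and then analyse a Palais-Smale sequence at the mountain pass level $c$, extracting a subsequence that converges strongly enough to yield a nontrivial weak solution.

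First I would verify the mountain pass geometry. Near the origin, combining (f4) with \eqref{n1} gives $F(x,t)=o(t^{2n/m})$ as $t\to 0^+$, so the subcritical singular Adams-Moser embedding yields $\int_\Omega F(x,u)|x|^{-\alpha}\,dx = o(\|u\|^{n/m})$; together with the lower bound $\tfrac{m}{n}\widehat M(\|u\|^{n/m})\geq \tfrac{mM_0}{n}\|u\|^{n/m}$ from (m1), this delivers a strict positive infimum of $\mathcal J$ on a small sphere. For the second geometric condition I would use (f3), which integrates to an exponential lower bound on $F$, against the polynomial upper estimate on $\widehat M$ from (m2), to conclude that $\mathcal J(tu_0)\to -\infty$ for any fixed $u_0\geq 0$ with $u_0\not\equiv 0$. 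Next I would bound the mountain pass level from above by testing $\mathcal J$ on a family of normalized Adams-Moser concentrating functions placed at the origin (so that the singular weight is fully exploited) and use the growth hypothesis (f5) to obtain $c < \tfrac{m}{n}\widehat M(r_\alpha)$, where $r_\alpha$ is the critical threshold built from $\beta_\alpha$ and $\beta_{n,m}$ above which the supremum in \eqref{eqc002} ceases to be attained.

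The mountain pass theorem then furnishes a sequence $\{u_k\}$ with $\mathcal J(u_k)\to c$ and $\mathcal J'(u_k)\to 0$. Boundedness in $W^{m,n/m}_0(\Omega)$ follows by estimating $\mathcal J(u_k)-\tfrac{1}{\theta}\langle \mathcal J'(u_k),u_k\rangle$ for $\theta$ sufficiently large: the Kirchhoff contribution is controlled from below by \eqref{7a0}, while the contribution of the nonlinearity is nonnegative outside $\{|u_k|\leq R_\theta\}$ thanks to \eqref{n7a0}. Up to a subsequence, $u_k\rightharpoonup u$ in $W^{m,n/m}_0(\Omega)$, $u_k\to u$ almost everywhere, and $\|u_k\|^{n/m}\to \tau$ for some $\tau\geq 0$; since $\tau=0$ would force $\mathcal J(u_k)\to 0\neq c$, we have $\tau>0$ and hence $M(\|u_k\|^{n/m})\to M(\tau)>0$.

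The main obstacle, made genuinely harder by the nonlocal coefficient $M(\|u_k\|^{n/m})$ than in the local case, is the passage to the limit in the equation: one needs $\int_\Omega f(x,u_k)(u_k-u)|x|^{-\alpha}\,dx \to 0$, from which $\langle \mathcal J'(u_k),u_k-u\rangle\to 0$ together with $M(\|u_k\|^{n/m})\to M(\tau)>0$ will give $\int_\Omega|\nabla^m u_k|^{n/m-2}\nabla^m u_k\cdot\nabla^m(u_k-u)\,dx\to 0$. Here I would invoke a singular Lions-type concentration-compactness lemma: the sharp upper bound on $c$ obtained in the previous step precisely forbids concentration of the measure $|\nabla^m u_k|^{n/m}$ at any single point, and in its absence one derives higher integrability of $e^{|u_k|^{n/(n-m)}}|x|^{-\alpha}$ in $L^q$ for some $q>1$, which feeds into Vitali's theorem and yields the desired $L^1$ convergence of the nonlinearity. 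The $(S_+)$-type monotonicity of the $\tfrac{n}{m}$-polyharmonic operator then upgrades weak convergence to strong convergence $u_k\to u$ in $W^{m,n/m}_0(\Omega)$, permitting passage to the limit in $\mathcal J'(u_k)\to 0$ to conclude $\mathcal J'(u)=0$. Continuity of $\mathcal J$ along this strongly convergent subsequence finally gives $\mathcal J(u)=c>0$, which forces $u\not\equiv 0$.
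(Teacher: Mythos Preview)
Your outline is sound through the mountain--pass geometry, the boundedness of Palais--Smale sequences, and the level estimate $c_*<\frac{m}{n}\widehat M\bigl(\beta_\alpha^{(n-m)/m}\bigr)$, but the passage to the limit has a genuine gap. The assertion that this level bound ``precisely forbids concentration of $|\nabla^m u_k|^{n/m}$ at any single point'' is not justified: with only $c_*<\frac{m}{n}\widehat M\bigl(\beta_\alpha^{(n-m)/m}\bigr)$ one cannot exclude atoms of the limiting measure when the weak limit $u_0$ is nonzero (the equality $c_*=\frac{m}{n}\widehat M(\tau)-\int_\Om F(x,u_0)|x|^{-\alpha}dx$ only controls $\tau$ if the $F$--integral vanishes). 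The paper does not try to rule out concentration. Instead, Lemma~\ref{IC2} \emph{allows} a finite concentration set $\mc A_\sigma$ and, working on its complement, establishes $\nabla^m u_k\to\nabla^m u_0$ almost everywhere and the weak convergence $|\nabla^m u_k|^{\frac nm-2}\nabla^m u_k\rightharpoonup|\nabla^m u_0|^{\frac nm-2}\nabla^m u_0$ in $L^{n/(n-m)}$. This a.e.\ gradient convergence is a \emph{hypothesis} of the Lions higher--integrability lemma (Lemma~\ref{pllion}); you invoke the latter without having secured the former.

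A second missing ingredient is the Nehari comparison $c_*\le b$ together with Lemma~\ref{lem7.3}, used in Claim~1 of the proof to obtain $M(\|u_0\|^{n/m})\|u_0\|^{n/m}\ge\int_\Om f(x,u_0)u_0|x|^{-\alpha}dx$ and hence $\mathcal J(u_0)\ge 0$. It is this nonnegativity, combined with the level bound and $(m1)$, that produces the key estimate $\rho_0^{n/m}<\beta_\alpha^{(n-m)/m}+\|u_0\|^{n/m}$; only then does Lemma~\ref{pllion}, applied with the \emph{nonzero} weak limit $v_0=u_0/\rho_0$, give uniform $L^q$ bounds on $e^{|u_k|^{n/(n-m)}}|x|^{-\alpha}$ for some $q>1$, after which your $(S_+)$ step goes through. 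Without Lemma~\ref{IC2} and the Nehari step your route from the level bound to strong convergence does not close; the direct ``no concentration below threshold'' mechanism you describe would need the stronger inequality $c_*<\frac{m}{2n}M_0\beta_\alpha^{(n-m)/m}$ (compare Lemma~\ref{cpt}), which the Adams--function test does not deliver for general $M$.
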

\noi We prove this Theorem by mountain pass Lemma. In the following Lemmas we study the mountain pass structure of the functional $\mathcal J$.
\begin{Lemma} There exists $R_0>0$ and $\eta>0$ such that $\mathcal J(u)\geq \eta$ for all $\|u\|=R_0$.
\end{Lemma}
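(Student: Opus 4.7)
The plan is to exploit the positivity lower bound $M(t) \ge M_0$ from hypothesis $(m1)$ together with two types of estimates on the nonlinear term: a polynomial bound near zero coming from $(f4)$ and the singular Adams--Moser inequality of Theorem \ref{admo} for the exponential part. From $(m1)$ one immediately gets $\widehat M(\|u\|^{n/m})\ge M_0\|u\|^{n/m}$, so
\[
\mathcal J(u)\ \ge\ \frac{m}{n}M_0\|u\|^{n/m}-\int_\Omega\frac{F(x,u)}{|x|^\alpha}\,dx,
\]
and the task reduces to showing that the integral on the right is dominated, for small $\|u\|$, by strictly higher-order powers of $\|u\|$.

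Next I would combine $(f2)$ and $(f4)$ to produce a pointwise bound of the form
\[
F(x,t)\ \le\ \epsilon\, |t|^{2n/m}+C_\epsilon\, |t|^{q}\,e^{(1+\epsilon)|t|^{n/(n-m)}}\qquad\forall\,(x,t)\in\overline\Omega\times\mathbb R,
\]
for some $q>n/m$ of my choice. The linear-growth near zero from $(f4)$ (after integration) gives the $|t|^{2n/m}$ term; the exponential growth assumption in $(f2)$, which encodes that $f$ behaves like $h(x,t)e^{|t|^{n/(n-m)}}$, gives the second term after absorbing a subcritical polynomial factor. Both constants $\epsilon$ and $q$ will be chosen at the end.

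The two resulting integrals are then controlled separately by Hölder's inequality against the weight $|x|^{-\alpha}$. For the polynomial piece, I pick $r>1$ with $\alpha r'<n$ and use the continuous embedding $W^{m,n/m}_0(\Omega)\hookrightarrow L^{s}(\Omega)$ for every finite $s$ (consequence of Adams--Moser) to conclude
\[
\int_\Omega\frac{|u|^{2n/m}}{|x|^\alpha}\,dx\ \le\ C\|u\|^{2n/m}.
\]
For the exponential piece I apply Hölder with conjugate exponents $(s,s')$ to separate $|u|^{q}$ from the exponential, bound the $L^{qs}$-norm by $C\|u\|^{q}$, and then write $u=\|u\|\,v$ with $\|v\|=1$ so that $(1+\epsilon)s'|u|^{n/(n-m)}=(1+\epsilon)s'\|u\|^{n/(n-m)}|v|^{n/(n-m)}$. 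Theorem \ref{admo} guarantees a uniform bound on the exponential integral provided $(1+\epsilon)s'\|u\|^{n/(n-m)}\le\beta_\alpha$, which holds as soon as $\|u\|=R_0$ is sufficiently small. This yields
\[
\int_\Omega\frac{|u|^q e^{(1+\epsilon)|u|^{n/(n-m)}}}{|x|^\alpha}\,dx\ \le\ C\|u\|^{q}.
\]
Putting the pieces together gives $\mathcal J(u)\ge \tfrac{m}{n}M_0\|u\|^{n/m}-C_1\|u\|^{2n/m}-C_2\|u\|^q$; since $2n/m>n/m$ and I can take $q>n/m$, choosing $R_0$ small enough produces the desired $\eta>0$.

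The main delicate point is the simultaneous smallness requirement: $R_0$ must be small enough that (i) the singular Adams--Moser supremum in Theorem \ref{admo} is applicable with exponent $\le\beta_\alpha$ after factoring out $\|u\|^{n/(n-m)}$, and (ii) the higher-order terms $\|u\|^{2n/m}$ and $\|u\|^{q}$ are genuinely dominated by the leading $\|u\|^{n/m}$. Calibrating the Hölder exponents $r,s$ so that simultaneously $\alpha r'<n$, $\alpha s_1'<n$ (where the weight is also split in the exponential estimate if needed), and $(1+\epsilon)s'R_0^{n/(n-m)}\le\beta_\alpha$ is the only nontrivial bookkeeping in the argument.
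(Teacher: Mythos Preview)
Your argument is correct and follows essentially the same strategy as the paper: bound $\widehat M$ from below via $(m1)$, split $F(x,t)$ into a small polynomial part near zero and an exponential tail, control the polynomial part by the weighted Sobolev embedding $W_0^{m,n/m}\hookrightarrow L^s(\Omega,|x|^{-\alpha}dx)$, and control the exponential part by H\"older plus the singular Adams--Moser inequality after normalizing $u/\|u\|$. The only cosmetic differences are that the paper writes the polynomial piece as $\epsilon|t|^{n/m}$ (invoking $(f1)$--$(f3)$) rather than your $\epsilon|t|^{2n/m}$ from $(f4)$, and uses $e^{|t|^{n/(n-m)}}$ directly instead of $e^{(1+\epsilon)|t|^{n/(n-m)}}$; both variants lead to the same conclusion once $R_0$ and $\epsilon$ are chosen small.
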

\begin{proof} From the assumptions, $(f1)-(f3)$, for $\e>0$, $r>\frac{n}{m}$, there exists $C>0$ such that
\[|F(x,t)| \le \e |t|^\frac{n}{m} + C |t|^r e^{|t|^{n/n-m}},\;\; \text{for all}\; (x,t)\in \Omega \times \mb R.\]
Now,
using continuous imbedding of $W_0^{m, \frac{n}{m}}(\Omega)\hookrightarrow L^s(\Omega; |x|^{-\alpha}dx)$ for all $s\in [1, \infty)$, we get
\begin{equation}\label{ione}
\int_\Om \frac{|u|^\frac{n}{m}}{|x|^\alpha} dx\leq C_1\|u\|^{\frac{n}{m}}\; \text{for all} \; u\in W_{0}^{m,\frac{n}{m}}.
\end{equation}
Similarly, using \eqref{ione}, we get
\begin{align*}
\int_\Om {|u|^r e^{|u|^{n/n-m}}}{|x|^{-\alpha}} dx\leq C \left(\int_\Om {e^{p|u|^{n/n-m}}}{|x|^{-\alpha}}\right)^\frac{1}{p}
\leq C\left(\int_\Om {e^{p\left(\|u\| \frac{|u|}{\|u\|}\right)^{\frac{n}{n-m}}}}{|x|^{-\alpha}}\right)^\frac{1}{p}{.}\\
\end{align*}
Now choose $\|u\|=R_0>0$ such that $p\|u\|^{\frac{n}{n-m}}<\beta_{\alpha}$, and using Theorem \ref{admo},
we get
\begin{equation}\label{itwo}
\int_\Om {|u|^r e^{|u|^{n/n-m}}}{|x|^{-\alpha}} dx\leq C_2\|u\|^r{.}
\end{equation}
Also from \eqref{ione} and \eqref{itwo}, we have
\begin{align*}
\int_\Om \frac{F(x,u)}{|x|^\alpha} dx \le \e\int_\Om {|u|^\frac{n}{m}}{|x|^{-\alpha}} dx +C \int_\Om {|u|^r e^{|u|^{n/n-1}}}{|x|^{-\alpha}} dx
\le \e C_1 \|u\|^\frac{n}{m} + C_2 \|u\|^{r}{.}
\end{align*}
Hence
\[\mathcal J(u) \ge \|u\|^\frac{n}{m} \left(M_0\frac{m}{n}-\e C_1 - C_2 \|u\|^{r-\frac{n}{m}}\right).\]
Since $r>\frac{n}{m},$ we can choose $\e$ and $R_0$ such  that $J(u)\ge \tau$ for some $\tau$ on $\|u\|=R_0$.\QED
\end{proof}
\begin{Lemma}
There exists $e\in W_0^{m, \frac{n}{m}}(\Omega)$ such that $\mathcal{J}(e)<0$.
\end{Lemma}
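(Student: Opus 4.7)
The plan is to choose some fixed nontrivial nonnegative test function $u_0 \in W_0^{m,n/m}(\Omega)$ (say, $u_0 \in C_c^\infty(\Omega)$ with $u_0 \geq 0$, $u_0 \not\equiv 0$) and show that $\mathcal{J}(t u_0) \to -\infty$ as $t \to \infty$; then we may take $e = t_\ast u_0$ for any sufficiently large $t_\ast$.

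To control the Kirchhoff part from above, I would use hypothesis $(m2)$: there exist constants such that $M(s) \leq a_1 + a_2 s^\sigma$ for $s \geq t_0$. Integrating, $\widehat M(s) \leq C_1 + C_2 s + C_3 s^{\sigma+1}$ for all $s \geq 0$, so
\begin{equation*}
\frac{m}{n}\widehat M\!\left(\|t u_0\|^{n/m}\right) \leq C_1 + C_2 t^{n/m}\|u_0\|^{n/m} + C_3 t^{(n/m)(\sigma+1)} \|u_0\|^{(n/m)(\sigma+1)}.
\end{equation*}
Thus the Kirchhoff term grows only polynomially in $t$.

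For the nonlinearity, I would extract an exponential lower bound. The assumption $(f3)$ yields, upon integrating the inequality $F(x,t)^{-1}f(x,t) \geq 1/K_0$ on $[t_0, t]$,
\begin{equation*}
F(x,t) \geq F(x,t_0) \, e^{(t-t_0)/K_0}\quad \text{for all } t \geq t_0,
\end{equation*}
and by $(f1)$--$(f2)$, $F(x,t_0) \geq c_0 > 0$ uniformly on any compact subset where $u_0$ is bounded away from $0$. Choose a measurable set $A \subset \Omega$ and $\delta > 0$ with $u_0 \geq \delta$ on $A$ and $|A|>0$; then for $t$ so large that $t\delta \geq t_0$,
\begin{equation*}
\int_\Omega \frac{F(x,tu_0)}{|x|^\alpha}\,dx \;\geq\; \int_A \frac{F(x,t\delta)}{|x|^\alpha}\,dx \;\geq\; c_0 \, e^{(t\delta - t_0)/K_0} \int_A \frac{dx}{|x|^\alpha},
\end{equation*}
which grows exponentially in $t$.

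Combining these two bounds, the negative exponential term dominates the polynomial Kirchhoff term, so $\mathcal{J}(tu_0) \to -\infty$ as $t\to\infty$. Fixing $t_\ast$ large enough that both $\mathcal{J}(t_\ast u_0) < 0$ and (if one wants the full mountain-pass geometry) $\|t_\ast u_0\| > R_0$ from the previous lemma, the function $e := t_\ast u_0$ proves the claim. The only delicate step is verifying that the exponential growth of $F$ really beats the polynomial upper bound on $\widehat M$; this hinges on $(f3)$, and is the reason that assumption (rather than only $(f2)$) is imposed.
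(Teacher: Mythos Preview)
Your proof is correct and follows essentially the same strategy as the paper: bound $\widehat M$ from above polynomially via $(m2)$, bound $F$ from below via $(f3)$, and conclude that the latter dominates along the ray $t\mapsto t u_0$. The only difference is in the second step: the paper extracts from $(f3)$ (through its consequence $\theta F(x,t)\le t f(x,t)$ for $t\ge R_\theta$) the polynomial lower bound $F(x,t)\ge C_1 t^{\theta}-C_2$ with $\theta$ arbitrary, and then picks $\theta>\max\{\tfrac{n}{m},\tfrac{n}{m}(\sigma+1)\}$; you instead use the sharper exponential lower bound $F(x,t)\ge F(x,t_0)e^{(t-t_0)/K_0}$ directly. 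The paper's version is slightly cleaner in that the polynomial bound $F(x,t)\ge C_1 t^{\theta}-C_2$ holds for all $t\ge 0$, so one integrates over all of $\Omega$ without isolating a sublevel set $A$ where $u_0\ge\delta$; your version trades that convenience for a stronger bound that makes the domination argument more robust (and in particular trivially covers the case $\sigma=-1$, which the paper treats separately and which your write-up glosses over).

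\medskip
\noindent One small point of care: your upper bound $\widehat M(s)\le C_1+C_2 s+C_3 s^{\sigma+1}$ is literally ill-posed when $\sigma=-1$ (the paper gets $b_0+a_1 t+a_2\ln t$ in that case). Since you compare against an exponential, this does not affect the conclusion, but it would be cleaner to note the logarithmic alternative explicitly.
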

\begin{proof}
\noi From equation $\eqref{n7a0}$, for $\theta>0$, there exist $C_1$, $C_2>0$ such that
 \begin{equation*}
 F(x,t)\geq C_1 t^{\theta}- C_2\;\mbox{for all}\; (x,t)\in \Om\times[0,+\infty){.}
 \end{equation*}
 Now from assumption $(m2)$, for all $t\geq t_0$
\begin{equation}\label{n2}
\widehat M(t)\leq\left\{
 \begin{array}{lr}
 a_0+a_1t+\frac{a_2}{\sigma+1} t^{\sigma+1}\; \mbox{if}\; \sigma\ne -1,\\
 b_0+ a_1 t+a_2 \ln t\quad\quad\mbox{if}\; \sigma=-1,
 \end{array}
 \right.
\end{equation}
where $a_0= \widehat M(t_0) -a_1t_0-a_2 \frac{t_0^{\sigma+1}}{(\sigma+1)}$ and $b_0= \widehat M(t_0) -a_1 t_0-a_2\ln t_0$.
Now, choose a function $\phi_0 \in W^{m,\frac{n}{m}}_{0}(\Om)$ with $\phi_0\geq 0$ and $\|\phi_0\|=1$. Then from \eqref{eq1} and \eqref{n2}, for all $t\geq t_0$, we obtain
\begin{equation*}
\mathcal J(t\phi_0)\leq \left\{
\begin{array}{lr}
\frac{m}{n}a_0+\frac{m }{n}a_1 t^\frac{n}{m}+\frac{m}{n}\frac{a_2}{\sigma+1} t^{\frac{n}{m}(\sigma+1)}- C_1 t^{\theta}\displaystyle\int_\Omega \frac{|\phi_0|^{\theta}}{|x|^\alpha}dx+C_2\displaystyle\int_\Omega\frac{1}{|x|^\alpha}dx,\; \mbox{if}\; \sigma\ne -1,\\
\frac{ m}{n}b_0+ \frac{m}{n}a_1 t^\frac{n}{m} +a_2 \ln t - C_1 t^{\theta}\displaystyle\int_\Omega \frac{|\phi_0|^{\theta}}{|x|^\alpha}dx+C_2\displaystyle\int_\Omega\frac{1}{|x|^\alpha}dx\;\;\;\quad\quad\mbox{if}\; \sigma=-1{.}
\end{array}
\right.
\end{equation*}
Now choose $\theta>\max\{\frac{n}{m},\frac{n}{m}(\sigma+1)\}$ we conclude that $\mathcal J(t \phi_0)\ra -\infty$ as $t\ra +\infty$.
Therefore, there exists  $e\in W_0^{m, \frac{n}{m}}(\Omega)$ such that $\mathcal{J}(e)<0$.\QED
\end{proof}
\subsection{Analysis of Palais-Smale sequence}
\noi In this section we study the first critical level of $\mathcal J$ and study the convergence of Palais--Smale sequences below this level.
\begin{Lemma}\label{bps}
Every Palais-Smale sequence of $\mathcal J$ is bounded in $W^{m,\frac{n}{m}}_{0}(\Om)$.
\end{Lemma}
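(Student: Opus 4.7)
The plan is to run the classical Ambrosetti--Rabinowitz test-function trick, adapted to the Kirchhoff setting via the elementary identity \eqref{7a0}. Let $\{u_k\}\subset W^{m,n/m}_0(\Om)$ be a Palais--Smale sequence, so that $\mathcal J(u_k)\to c$ and $\mathcal J'(u_k)\to 0$ in the dual. I fix a parameter $\theta>\frac{2n}{m}$ (small enough that $\frac{1}{\theta}<\frac{m}{2n}$ but still available as an exponent in \eqref{n7a0}) and look at the combination
\begin{equation*}
\mathcal J(u_k)-\tfrac{1}{\theta}\langle\mathcal J'(u_k),u_k\rangle
= \Bigl[\tfrac{m}{n}\widehat M(\|u_k\|^{n/m})-\tfrac{1}{\theta}M(\|u_k\|^{n/m})\|u_k\|^{n/m}\Bigr] + \int_\Om\tfrac{1}{|x|^{\al}}\Bigl[\tfrac{1}{\theta}f(x,u_k)u_k-F(x,u_k)\Bigr]dx,
\end{equation*}
whose right-hand side is $c+o(1)+o(1)\|u_k\|$.

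For the first bracket, I decompose
\begin{equation*}
\tfrac{m}{n}\widehat M(t)-\tfrac{1}{\theta}M(t)t=\Bigl[\tfrac{m}{n}\widehat M(t)-\tfrac{m}{2n}M(t)t\Bigr]+\Bigl(\tfrac{m}{2n}-\tfrac{1}{\theta}\Bigr)M(t)t.
\end{equation*}
By \eqref{7a0} the bracketed term is nondecreasing in $t$ and vanishes at $t=0$, hence is nonnegative, while by $(m1)$ the second summand is at least $(\frac{m}{2n}-\frac{1}{\theta})M_0\,t$. Applied at $t=\|u_k\|^{n/m}$ this provides the coercive lower bound $(\frac{m}{2n}-\frac{1}{\theta})M_0\|u_k\|^{n/m}$.

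For the second bracket, I invoke \eqref{n7a0}: for the chosen $\theta$, there is $R_\theta>0$ with $\frac{1}{\theta}tf(x,t)-F(x,t)\ge 0$ whenever $t\ge R_\theta$; on $\{0\le u_k\le R_\theta\}$ the integrand is bounded (by $(f1)$ and continuity), and on $\{u_k\le 0\}$ it vanishes by $(f1)$. Thus the integral term is at least $-C\int_\Om|x|^{-\al}dx$, which is finite because $0<\al<n$. Combining these two bounds yields
\begin{equation*}
\Bigl(\tfrac{m}{2n}-\tfrac{1}{\theta}\Bigr)M_0\,\|u_k\|^{n/m}\le c+o(1)+o(1)\|u_k\|+C\int_\Om|x|^{-\al}dx,
\end{equation*}
and since $n/m\ge 2>1$, this forces $\{\|u_k\|\}$ to be bounded.

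The only delicate point in this plan is the splitting of the Kirchhoff term so that the combination $\frac{m}{n}\widehat M-\frac{1}{\theta}M\cdot t$ yields a strictly positive multiple of $\|u_k\|^{n/m}$; this is precisely what \eqref{7a0} together with $(m1)$ is designed to deliver. The singular weight and exponential nonlinearity enter only mildly, through the integrability $|x|^{-\al}\in L^1(\Om)$ and the AR-type consequence \eqref{n7a0} of $(f3)$, so no Adams--Moser estimate is needed at this stage.
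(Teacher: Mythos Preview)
Your proof is correct and follows essentially the same route as the paper's: form $\mathcal J(u_k)-\tfrac{1}{\theta}\langle\mathcal J'(u_k),u_k\rangle$ with $\theta>\tfrac{2n}{m}$, control the Kirchhoff bracket via \eqref{7a0}, and handle the nonlinear term through the AR-type consequence \eqref{n7a0} of $(f3)$. If anything, you are more careful than the paper in explicitly isolating the bounded correction on $\{0\le u_k\le R_\theta\}$ and in invoking $M(t)\ge M_0$ to extract the coercive $\|u_k\|^{n/m}$ term; the paper simply writes the chain of inequalities and absorbs these details.
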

{\proof} Let $\{u_k\}\subset W^{m,\frac{n}{m}}_{0}(\Om)$ be a Palais-Smale sequence for $\mathcal J$ at level $c$, that is
\begin{equation}\label{n7a1}
\frac{m}{n}\widehat M(\|u_k\|^\frac{n}{m})-\int_\Om \frac{F(x,u_k)}{|x|^\alpha}dx \rightarrow c
\end{equation}
and for all $\phi \in W^{m,\frac{n}{m}}_{0}(\Om)$
\begin{equation}\label{n7a2}
 \left|M(\|u_k\|^\frac{n}{m})\int_\Om |\na^m u_k|^{\frac{n}{m}-2} \na^m u_k \na^m \phi~ dx -\int_\Om \frac{f(x,u_k)}{|x|^\alpha} \phi~ dx\right| \le \e_k \|\phi\|{,}
\end{equation}
where $\e_k\rightarrow 0$ as $k\rightarrow \infty.$
From \eqref{7a0}, \eqref{n7a0}, \eqref{n7a1} and \eqref{n7a2}, we obtain
 \begin{align*}
c+ \epsilon_k\|u_k\|&\ge \frac{m}{n}\widehat M(\|u_k\|^\frac{n}{m})-\frac{1}{\theta} M(\|u_k\|^\frac{n}{m}) \|u_k\|^\frac{n}{m} -\int_\Om \frac{F(x,u_k)-\frac{1}{\theta}f(x,u_k)u_k}{|x|^\alpha}dx \\
&\geq \left(\frac{m}{2n}-\frac{1}{\theta}\right) M(\|u_k\|^\frac{n}{m}) \|u_k\|^\frac{n}{m}.
\end{align*}
From this and taking $\theta >\frac{2n}{m}$, we obtain the boundedness of the sequence.  \QED
\noindent Now we construct a sequence of functions, using Adam's function \cite{DRA}.
Let $\mb B$ denote the unit ball $\mb B(0,1)$ with center $0$ in $\mb R^{n}$ and $\mb B_{l}:= \mb B(0,l)$.
We have the following results (see \cite{ol}).
For all $l\in(0,1)$, there exists
\begin{align}\label{ec1}
U_{l}\in \{u\in W^{m,\frac{n}{m}}_{0}(\mb B): u|_{\mb B_l}=1\}
\end{align}
such that
\[\|U_l\|^{\frac{n}{m}}= C_{m,\frac{n}{m}}(\mb B_{l}; \mb B)\leq \left(\frac{m\ba_{\alpha}}{n \log(\frac{1}{l})}\right)^{\frac{n-m}{m}}{,}\]
{where} $C_{m,\frac{n}{m}}(K;E)$ denotes the $(m,\frac{n}{m})-$conductor capacity of $K$ in $E$, whenever $E$ is an open set and $K$ a relatively compact subset of $E$. It is defined as
\[C_{m,\frac{n}{m}}(K;E):= \inf\left\{\|u\|^{\frac{n}{m}}: u\in C_{0}^{\infty}(E), u|_{K} =1\right\}.\]

\noi Now, let $x_0\in \Om$, $R\leq R_{0}= d(x_0, \partial \Om)$, where $d$ denotes the distance from $x_0$ to $\partial\Omega$. Then the Adams function $\tilde{A}_r$ is defined as
$$\tilde{A}_{r}(x)=\begin{cases}
\left(\frac{n \log(\frac{R}{r})}{m\ba_{\alpha}}\right)^{\frac{n-m}{n}}\; U_{\frac{r}{R}}(\frac{x-x_0}{R}) & \mbox{if}\;\; |x-x_0|<R\\
0 & \mbox{if}\;\; |x-x_0|\geq R,
\end{cases}
$$
where $0<r<R$ and $U_{l}$ is as in equation \eqref{ec1}. Also it is easy to check that $\|\tilde{A}_{r}\|\leq 1$.

\noi Firstly, Let $\de_k>0$ be such that $\de_k \ra 0$ as $k\ra\infty$. We will choose the exact choice of $\de_k$ later. Now by taking $x_0=0$, $R= \de_k$ and $r=\frac{\de_k}{k}$, we define
$${A}_{k}(x)=\begin{cases}
\left(\frac{n \log k}{m\ba_{\alpha}}\right)^{\frac{n-m}{n}}\; U_{\frac 1k}(\frac{x}{\de_k}), & \mbox{if}\; \;|x|<\de_k\\
 0, &\mbox{if}\; |x|\geq \de_k,
\end{cases}
$$
where $A_{k}=\tilde{A}_{r}$. Then $A_{k}(0)= \left(\frac{n \log k}{m\ba_{\alpha}}\right)^{\frac{n-m}{n}}$ and $\|A_k\|\leq 1$.
\noi Let $\ds \Gamma=\{\gamma\in C([0,1],W_0^{m,\frac{n}{m}}(\Omega)):\gamma(0)=0,\mathcal J(\gamma(1))<0\}$ and define the mountain-pass level
$\ds c_*=\inf_{\gamma\in \Gamma}\max_{t\in[0,1]}\mathcal J(\gamma(t))$. Then we have,
\begin{Lemma}\label{le1}
There exists $k$ with $A_k\in W_{0}^{m,\frac{n}{m}}(\Om)$, $\|A_k\|\leq 1$ such that
$c_*< \frac{m}{n} \widehat M\left(\ba_{\alpha}^{\frac{n-m}{m}}\right)${.}
\end{Lemma}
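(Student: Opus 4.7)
The plan is to bound $c_*$ from above by the maximum of $\mc J$ along the ray $s\mapsto sA_k$. Since $\mc J(tA_k)\ra-\infty$ as $t\ra+\infty$ (exactly as in the preceding lemma, using $(f3)$ and $(m2)$), one may pick $T=T(k)>0$ with $\mc J(TA_k)<0$ and use the path $\gamma(s)=sTA_k$, obtaining $c_*\le \sup_{t\ge 0}\mc J(tA_k)$. The whole task thus reduces to choosing $k$ and the parameter $\de_k$ so that this supremum lies strictly below the target $\frac{m}{n}\widehat M(\ba_\al^{(n-m)/m})$.

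I argue by contradiction. Assume the maximum of $\mc J(tA_k)$, attained at some $t_k>0$, is at least $\frac{m}{n}\widehat M(\ba_\al^{(n-m)/m})$ for every $k$. Since $F\ge 0$ and $\widehat M$ is strictly increasing (by $(m1)$ together with $M\ge M_0>0$), this yields at once
\[t_k^{n/m}\|A_k\|^{n/m}\ge\ba_\al^{(n-m)/m},\]
which combined with $\|A_k\|\le 1$ forces $t_k\ge\ba_\al^{(n-m)/n}$. Differentiating $\mc J(sA_k)$ at $s=t_k$ gives the critical point identity
\[M\bigl(t_k^{n/m}\|A_k\|^{n/m}\bigr)\,t_k^{n/m}\|A_k\|^{n/m}=\int_\Omega\frac{f(x,t_kA_k)\,t_kA_k}{|x|^\al}\,dx,\]
which will drive the contradiction. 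A preliminary observation, using $(m2)$ to bound the left side polynomially in $t_k$ and $(f5)$ to force the right side to grow faster should $t_k\ra\infty$, rules out $t_k$ escaping to infinity; we may therefore assume $t_k$ is uniformly bounded, so that the left side is $O(1)$.

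For the lower bound on the right side I exploit $(f5)$: given any prescribed $C>0$ there exists $R>0$ such that $sf(x,s)\ge Ce^{|s|^{n/(n-m)}}$ for every $s\ge R$. On the inner ball $\mb B_{\de_k/k}$ the Adams function is identically equal to $A_k(0)=(n\log k/m\ba_\al)^{(n-m)/n}$, so for $k$ sufficiently large $t_kA_k$ exceeds $R$ throughout this ball; restricting the integral there and computing explicitly gives
\[\int_\Omega\frac{f(x,t_kA_k)\,t_kA_k}{|x|^\al}\,dx\;\ge\;\frac{C\,\omega_{n-1}}{n-\al}\,\de_k^{\,n-\al}\,k^{\,t_k^{n/(n-m)}n/(m\ba_\al)-(n-\al)}.\]
Because $t_k\ge\ba_\al^{(n-m)/n}$, the exponent of $k$ is at least $n/m-(n-\al)$. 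The main obstacle is the delicate calibration that follows: the parameter $\de_k\ra 0$ must be chosen slowly enough, and one should also use that $C$ in $(f5)$ may be taken dependent on $k$ (since $t_kA_k(0)\ra+\infty$), so that the displayed lower bound blows up as $k\ra\infty$. This contradicts the $O(1)$ bound on the left side and establishes the strict inequality $c_*<\frac{m}{n}\widehat M(\ba_\al^{(n-m)/m})$.
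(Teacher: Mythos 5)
Your proof follows the paper's approach very closely: reduce $c_*$ to a sup along the rays $t\mapsto tA_k$, assume for contradiction the level bound fails for every $k$, derive $t_k\ge\ba_\al^{(n-m)/n}$ from $(m1)$, write the critical point identity $M(t_k^{n/m}\|A_k\|^{n/m})t_k^{n/m}\|A_k\|^{n/m}=\int_\Om t_kA_kf(x,t_kA_k)/|x|^\al\,dx$, and lower bound the right side by restricting to the inner ball $\mb B_{\de_k/k}$ where $A_k$ is the constant $A_k(0)$. Up to that point the argument is the same. The issue is the final step, which you yourself flag as ``the main obstacle'' but do not actually resolve.

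Concretely: your lower bound is $\frac{C\,\omega_{n-1}}{n-\al}\,\de_k^{\,n-\al}\,k^{t_k^{n/(n-m)}n/(m\ba_\al)-(n-\al)}$, and with $t_k^{n/(n-m)}\ge\ba_\al$ the exponent of $k$ is bounded below only by $n/m-(n-\al)$. For $m>1$ and $\al$ small this exponent is negative, so $k^{n/m-(n-\al)}\to 0$ no matter how slowly $\de_k$ decays. Letting the constant $C$ from $(f5)$ grow with $k$ does not rescue this, because $(f5)$ is purely qualitative and gives no rate: you only know that for each fixed $C$ the inequality $sh(x,s)\ge C$ eventually holds, which yields $\text{RHS}\ge C\cdot(\text{something that goes to }0)$, and that need not diverge. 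So the step ``the displayed lower bound blows up as $k\to\infty$'' is asserted rather than proved, and as written it would fail in a regime of parameters the lemma covers.

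What the paper does differently at this point is to keep the factor $A_k(0)\sim(\log k)^{(n-m)/n}$ explicit, make the definite choice $\de_k=(\log k)^{-1/(\eta(n-\al))}$ with $\eta>n/(n-m)$, and track the exponent of $k$ down to a \emph{positive} power $k^\al$ (compare equation \eqref{firstlevel} and the line before it, where the chain $k^{nt_k^{n/(n-m)}/\ba_\al}(\de_k/k)^{n-\al}\ge k^\al\de_k^{n-\al}$ is used). After this reduction the growth $(f5)$ is invoked only to control the bounded factor $t_kh(x_0,t_kA_k(0))$, not to rescue a vanishing power of $k$. So the decisive content of the paper's proof -- the explicit calibration of $\de_k$ together with the careful bookkeeping of which power of $k$ survives -- is exactly the part your argument hand-waves; without it the contradiction is not established. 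You should also note the discrepancy between your exponent $n/m-(n-\al)$ and the paper's $\al$, and verify which normalization of the Adams functions $A_k$ (and of $\|U_l\|$) is in force, since the viability of the whole estimate turns on that factor.
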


\begin{proof}
Suppose for the sake of contradiction that for all $k$, we have
\[c_*\geq  \frac{m}{n}\widehat M\left(\ba_{\alpha}^{\frac{n-m}{m}}\right).\]
Then for all $k$, there exist $t_k>0$ such that
$\mathcal J(t_k A_k)\geq \frac{m}{n}\widehat M\left(\ba_{\alpha}^{\frac{n-m}{m}}\right).$ Therefore

\[\frac{m}{n} \widehat M(\|t_kA_k\|^\frac{n}{m})\geq\frac{m}{n}\widehat M\left(\ba_{\alpha}^{\frac{n-m}{m}}\right){.}
\]
Now using the assumption $(m1)$, we get
\begin{align}\label{11}
t_{k}^{\frac{n}{m}}\geq {\ba_{\alpha}}^{\frac{n-m}{m}}.
\end{align}
Now, since $\frac{d}{dt}\mathcal J(t A_{k})=0$ at $t=t_k$,  it follows that
\begin{align*}
M\left(t_{k}^{\frac{n}{m}}\|A_k\|^{\frac{n}{m}}\right) t_{k}^{\frac{n}{m}}\|A_k\|^\frac{n}{m}=\int_{\Om} t_{k}A_{k} \frac{f(x, t_k A_k)}{|x|^\alpha}dx.
\end{align*}
\noi Using equation \eqref{11}, we estimate
\begin{align*}
\int_{\Om} t_k A_k \frac{f(x,t_k A_k)}{|x|^\alpha} dx &\geq \int_{|x|\leq \frac{\de_k}{k}} t_k A_k \frac{f(x,t_k A_k)}{|x|^\alpha}~ dx\\
&=  t_k A_k(0) h(x_0,t_k A_k(0)) e^{|t_k A_k(0)|^{\frac{n}{n-m}}}\frac{w_{n-1}}{n-\alpha}\left(\frac{\de_k}{k}\right)^{n-\alpha}\\
&\geq t_k A_{k}(0) h(x_0,t_k A_k(0)) k^{n\frac{t_k^{\frac{n}{n-m}}}{\beta_{\alpha}}}\left(\frac{\de_k}{k}\right)^{n-\alpha}\\
 & \geq C t_k A_{k}(0) h(x_0,t_k A_{k}(0)) k^\alpha \de_{k}^{n-\alpha}
 \end{align*}
for some constant $C>0$. Now we choose $\delta_k=(\log k)^\frac{-1}{\eta(n-\alpha)}$ with $\eta>\frac{n}{n-m}$ in the expression for $A_k(0)$ to get
 \begin{equation}\label{firstlevel}
 \int_{\Om} t_k A_k \frac{f(x,t_k A_k)}{|x|^\alpha} dx \geq C \frac{w_{n-1}}{n-\alpha}\left(\frac{n}{\beta_{\alpha}}\right)^{\frac{n-m}{m}} t_k h(x_0,t_k A_{k}(0))(\log k)^{\frac{n-m}{m}-\frac{1}{\eta}}{.}
 \end{equation}
 From $(f5)$, we see that the right hand side of \eqref{firstlevel} tends to $\infty$ as $k\rightarrow \infty.$
Thus $t_{k}^{\frac{n}{m}}\ra\infty$ as $k\ra\infty$ which is a contradiction with a bounded $t_k$. This completes the proof of Lemma.\QED
\end{proof}

\subsection{Existence of Solution}
In this subsection, we study the first critical level and study the Palais--Smale sequences below this level. Using the concentration compactness principle of Lions \cite{li}, we show that the Palais--Smale sequence $\{u_k\}$ below the first critical level satisfies the property that $\{|\nabla^{m}u_k|^{\frac{n}{m}-2}\nabla^m u_k\}$ converges weakly in $L^{\frac{n}{n-m}}(\Om)$ (see Lemma \ref{IC2}). We show that weak limit of Palais-Smale sequence is the required solution of the problem $(\mathcal P)$.
\noi In order to prove that a Palais-Smale sequence converges to a solution of problem ($\mathcal P$) we need  the following  convergence Lemma. We refer to Lemma 2.1 in \cite{fmr} for a  proof.
\begin{Lemma}\label{lc1}
Let $\Om\subset \mb R^n$ be a bounded domain and $f:\overline{\Om}\times \mb R \ra \mb R$ a continuous function. Then for any sequence $\{u_k\}$ in
$L^{1}(\Om)$ such that
\[u_k\ra u\;\mbox{in}\; L^{1}(\Om),\quad \frac{f(x,u_k)}{|x|^\alpha}\in L^{1}(\Om)\; \mbox{and}\; \int_{\Om}\frac{|f(x,u_k)u_k|}{|x|^\alpha}dx\leq C,\]
we have up to a subsequence $\frac{f(x,u_k)}{|x|^\alpha}\ra \frac{f(x,u)}{|x|^\alpha}$ and $\frac{F(x,u_k)}{|x|^\alpha}\ra \frac{F(x,u)}{|x|^\alpha}$ strongly in $L^{1}(\Om)$.\QED
\end{Lemma}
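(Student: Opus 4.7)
The strategy is to apply Vitali's convergence theorem after extracting an a.e.\ convergent subsequence: the conclusion reduces to establishing (i) pointwise a.e.\ convergence of the integrands and (ii) equi-integrability of the sequences $\{f(x,u_k)/|x|^\al\}$ and $\{F(x,u_k)/|x|^\al\}$. To set up (i), I would pass to a subsequence along which $u_k(x)\to u(x)$ a.e.\ in $\Om$ (possible since $u_k\to u$ in $L^1(\Om)$). Continuity of $f$ on $\overline{\Om}\times\mb R$ then yields $f(x,u_k)\to f(x,u)$ and $F(x,u_k)\to F(x,u)$ a.e., and thus the same for the weighted versions. A Fatou argument applied to the nonnegative sequence $|f(x,u_k)u_k|/|x|^\al$ gives $f(x,u)u/|x|^\al\in L^1(\Om)$, and splitting $\Om=\{|u|\le 1\}\cup\{|u|>1\}$ and using $|x|^{-\al}\in L^1(\Om)$ (since $\al<n$ and $\Om$ is bounded) one deduces $f(x,u)/|x|^\al\in L^1(\Om)$, and likewise for $F(x,u)/|x|^\al$.

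The key step is equi-integrability of $\{f(x,u_k)/|x|^\al\}$. Fix $\e>0$ and a measurable $E\subset\Om$, and split at height $M>0$:
\[
\int_E \frac{|f(x,u_k)|}{|x|^\al}\,dx = \int_{E\cap\{|u_k|\le M\}}\frac{|f(x,u_k)|}{|x|^\al}\,dx + \int_{E\cap\{|u_k|>M\}}\frac{|f(x,u_k)|}{|x|^\al}\,dx.
\]
On the high set, I would use the pointwise bound $|f(x,u_k)|\le |u_kf(x,u_k)|/M$ to control the second integral by $C/M<\e/2$ for $M$ large. On the low set, continuity of $f$ on the compact cylinder $\overline{\Om}\times[-M,M]$ gives a uniform bound $K_M$, so the first integral is $\le K_M\int_E|x|^{-\al}\,dx$, which is $<\e/2$ once $|E|$ is sufficiently small (by absolute continuity of the $L^1$ integral of $|x|^{-\al}$). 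Vitali's convergence theorem then produces the desired strong $L^1$ convergence of $f(x,u_k)/|x|^\al$.

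The analogous convergence for $F$ is where I expect the main obstacle, since the hypothesis controls only $fu$ and not $F$ directly, so $|F(x,u_k)|$ is not automatically dominated pointwise by $|u_kf(x,u_k)|$. The low-height contribution $\{|u_k|\le M\}$ is handled exactly as above, using continuity of $F(x,t)=\int_0^tf(x,s)\,ds$ on $\overline{\Om}\times[-M,M]$ to obtain a uniform bound $MK_M$. For the tail $\{|u_k|>M\}$, I would follow \cite{fmr} by writing $F(x,u_k)=F(x,M\,\mathrm{sign}\,u_k)+\int_{M\,\mathrm{sign}\,u_k}^{u_k}f(x,s)\,ds$ and invoking the pointwise estimate $|f(x,s)|\le|sf(x,s)|/M$ valid on this range; combined with the a priori control $\int|f(x,u_k)u_k|/|x|^\al\le C$, this delivers the tail equi-integrability estimate. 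A final application of Vitali's theorem then closes the argument and yields the strong $L^1$ convergence of $F(x,u_k)/|x|^\al$.
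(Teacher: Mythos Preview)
Your argument for the $f$-part is correct and is exactly the approach of Lemma~2.1 in \cite{fmr}, which is all the paper cites for a proof: Vitali's theorem with the truncation split $\{|u_k|\le M\}\cup\{|u_k|>M\}$, the tail bound $|f(x,u_k)|\le |u_kf(x,u_k)|/M$, and the compact-cylinder bound on the low set combined with $|x|^{-\al}\in L^1(\Om)$.

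The $F$-part, however, has a genuine gap. After writing $F(x,u_k)=F(x,M\,\mathrm{sign}\,u_k)+\int_{M\,\mathrm{sign}\,u_k}^{u_k} f(x,s)\,ds$ and using $|f(x,s)|\le |sf(x,s)|/M$ for $|s|>M$, you obtain the bound $\frac{1}{M}\int_{M}^{|u_k|} |s\,f(x,\pm s)|\,ds$. This is an integral in the \emph{$s$-variable} and is \emph{not} controlled by the hypothesis $\int_\Om |u_kf(x,u_k)|\,|x|^{-\al}\,dx\le C$, which only involves the endpoint value $s=u_k(x)$. In fact, as stated for an arbitrary continuous $f$, the $F$-conclusion is false: with $\Om=(0,1)$, $\al=0$, $f\equiv 1$ and $u_k=k\,\chi_{(0,1/k)}$, one has $u_k\to 0$ in $L^1$, $\int|f(u_k)u_k|=1$, yet $F(t)=t$ gives $\int|F(u_k)-F(0)|=1\not\to 0$.

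What rescues the $F$-convergence in the paper's setting is not the decomposition you wrote, but the structural hypothesis $(f3)$: $F(x,t)\le K_0\,f(x,t)$ for $t\ge t_0$. With this, on $\{|u_k|>M\}$ (take $M\ge t_0$) one has directly $|F(x,u_k)|\le K_0|f(x,u_k)|\le \frac{K_0}{M}|u_kf(x,u_k)|$, and the equi-integrability follows from the given $L^1$ bound. So either invoke $(f3)$ explicitly for the $F$-claim, or restrict the lemma to the $f$-part (which is all that \cite{fmr} actually proves) and handle $F$ separately where it is used.
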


\noi Now we need the following Lemma, inspired by \cite{do6}, to show that weak limit of a Palais--Smale sequence is a weak solution of $(\mathcal P)$.

\begin{Lemma}\label{IC2}
 For any Palais--Smale sequence $\{u_k\}$, there exists a subsequence
 denoted by $\{u_k\}$ and $u\in W^{m,\frac{n}{m}}_{0}(\Om)$ such that
\[\frac{f(x,u_k)}{|x|^\alpha}\ra \frac{f(x,u)}{|x|^\alpha}\;\mbox{in}\;L^{1}(\Om)\]
\begin{align}\label{ee3}
|\na^m u_k|^{\frac{n}{m}-2}\na^m u_k \rightharpoonup |\na^m u|^{\frac{n}{m}-2}\na^m u\;\; & \mbox{weakly in}\;\; L^{\frac{n}{n-m}}(\Om) \;\text{if}\; m \; \text{is even},\\
|\na^m u_k|^{\frac{n}{m}-2}\na^m u_k \rightharpoonup |\na^m
u|^{\frac{n}{m}-2}\na^m u\; & \mbox{ weakly in}\;\;
(L^{\frac{n}{n-m}}(\Om))^n \;\text{if}\; m \;\text{is odd}.\nonumber
\end{align}
\end{Lemma}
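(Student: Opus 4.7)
The plan is to first extract the standard convergences from boundedness, then dispatch the $L^1$-convergence of the nonlinear term via Lemma \ref{lc1}, and finally tackle the weak convergence of the nonlinear gradient term by upgrading to almost-everywhere convergence of $\na^m u_k$.

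By Lemma \ref{bps}, $\{u_k\}$ is bounded in $W^{m,\frac{n}{m}}_0(\Om)$, so, passing to a subsequence, $u_k\rightharpoonup u$ weakly in $W^{m,\frac{n}{m}}_0(\Om)$, $u_k\to u$ in $L^q(\Om,|x|^{-\al}dx)$ for every $q\in[1,\infty)$ (by the compact part of the singular Adams embedding noted just after Theorem \ref{admo}), and $u_k\to u$ a.e. in $\Om$. Taking $\phi=u_k$ in \eqref{n7a2} and using that $M(\|u_k\|^{n/m})\|u_k\|^{n/m}$ is uniformly bounded (continuity of $M$ and boundedness of $\|u_k\|$) yields
\[
\int_\Om \frac{f(x,u_k)u_k}{|x|^\al}\,dx\le C.
\]
By $(f1)$ the integrand is nonnegative, so this is exactly the hypothesis of Lemma \ref{lc1}, giving $f(x,u_k)/|x|^\al\to f(x,u)/|x|^\al$ strongly in $L^1(\Om)$.

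For the gradient statement, note that $\bigl(\tfrac{n}{m}-1\bigr)\cdot\tfrac{n}{n-m}=\tfrac{n}{m}$, so
\[
\int_\Om \bigl||\na^m u_k|^{n/m-2}\na^m u_k\bigr|^{\frac{n}{n-m}}\,dx=\|u_k\|^{n/m}\le C.
\]
Thus a further subsequence satisfies $|\na^m u_k|^{n/m-2}\na^m u_k\rightharpoonup \zeta$ weakly in $L^{n/(n-m)}(\Om)$ (componentwise for odd $m$), and the real task is to identify $\zeta=|\na^m u|^{n/m-2}\na^m u$. Since the map $\xi\mapsto|\xi|^{n/m-2}\xi$ is continuous, a.e.\ convergence $\na^m u_k\to\na^m u$, together with the uniform $L^{n/(n-m)}$ bound and Vitali's theorem, would identify $\zeta$ as claimed.

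The main obstacle is therefore the a.e.\ convergence of $\na^m u_k$, which I would obtain following the strategy of do Ó \cite{do6} adapted to the polyharmonic singular setting. Concretely, using the $L^1$-convergence established above to absorb the right-hand side of \eqref{n7a2} and testing against a truncation of the form $T_\e(u_k-u)$ with $|T_\e|\le\e$, the Palais--Smale condition reduces matters to showing
\[
M(\|u_k\|^{n/m})\int_\Om\bigl(|\na^m u_k|^{n/m-2}\na^m u_k-|\na^m u|^{n/m-2}\na^m u\bigr)\cdot \na^m T_\e(u_k-u)\,dx\to 0.
\]
The lower bound $M(t)\ge M_0>0$ from $(m1)$ keeps the coefficient away from zero, and the singular Adams--Moser inequality \eqref{eqc002} combined with Lions' higher-integrability lemma---valid precisely because one works below the first critical level $\tfrac{m}{n}\widehat M(\ba_\al^{(n-m)/m})$ secured in Lemma \ref{le1}---provides extra integrability of $e^{p|u_k|^{n/(n-m)}}/|x|^\al$ on subdomains avoiding concentration points, controlling the exponential source term. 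The strict monotonicity of $\xi\mapsto|\xi|^{n/m-2}\xi$ then forces $\na^m u_k\to\na^m u$ a.e. in $\Om$, completing the identification and hence the proof.
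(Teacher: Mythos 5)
Your opening steps match the paper: boundedness from Lemma \ref{bps}, the standard subsequential convergences, the uniform bound on $\int_\Om f(x,u_k)u_k|x|^{-\al}dx$ from testing \eqref{n7a2} with $u_k$, and the appeal to Lemma \ref{lc1} for $L^1$-convergence of the nonlinear term. You also correctly identify the crux: one needs $\na^m u_k\to\na^m u$ a.e.\ in $\Om$ in order to identify the weak $L^{n/(n-m)}$-limit of $|\na^m u_k|^{n/m-2}\na^m u_k$.

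The proposed route to a.e.\ convergence, however, contains a genuine gap. You suggest testing against a truncation $T_\e(u_k-u)$ with $|T_\e|\le\e$, as in do \'{O}'s argument for the $N$-Laplacian. This is exactly the step that \emph{fails} for the polyharmonic operator $m\ge 2$: the composition $T_\e(u_k-u)$ lies in $W^{1,n/m}_0(\Om)$ but is not in $W^{m,n/m}_0(\Om)$ in general, because $T_\e$ is only Lipschitz and the higher-order chain rule produces distributional terms supported on level sets. Consequently $T_\e(u_k-u)$ is not an admissible test function in \eqref{n7a2}, and there is no identity $\na^m T_\e(u_k-u)=T_\e'(u_k-u)\na^m(u_k-u)$ to make the proposed cancellation happen. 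This is precisely why the paper replaces truncation in the \emph{range} with localization in \emph{space}: it introduces the concentration measure $\mu$ via $|\na^m u_k|^{n/m}\to\mu$ in $D'(\Om)$, proves that the set $\mc A_\sigma$ of $\sigma$-atoms is finite, shows (Assertion 1) that $f(x,u_k)u_k|x|^{-\al}$ enjoys uniform $L^q$-integrability on compacta away from $\mc A_\sigma$ via the singular Adams inequality, and then (Assertion 2) tests \eqref{n7a2} with $\psi_\e u_k$ and $-\psi_\e u$ where $\psi_\e$ is a smooth spatial cutoff vanishing near $\mc A_\sigma$; the Leibniz expansion of $\na^m(\psi_\e\,\cdot\,)$ together with the interpolation inequality handles the lower-order commutator terms, and strict monotonicity of $\xi\mapsto|\xi|^{n/m-2}\xi$ then gives $\na^m u_k\to\na^m u$ a.e.\ on $\Om_{\e_0}$, and hence a.e.\ in $\Om$ since $\e_0$ is arbitrary.

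Two smaller points: (i) the paper's Lemma \ref{IC2} holds for \emph{any} Palais--Smale sequence and its proof does not invoke the first critical level from Lemma \ref{le1} nor Lions' higher-integrability Lemma \ref{pllion} — those are only needed later in the proof of Theorem \ref{main} to upgrade weak to strong convergence in norm; your sketch misattributes them to this lemma. (ii) Identifying the weak limit $\zeta$ once a.e.\ convergence and the $L^{n/(n-m)}$ bound are in hand is a standard fact about a.e.-convergent sequences bounded in a reflexive $L^p$ space; invoking Vitali is not the right tool since $L^1$-convergence is neither needed nor available here.
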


\begin{proof}
From Lemma \ref{bps}, we obtain that  $\{u_k\}$ is bounded in $W^{m,\frac{n}{m}}_{0}(\Om)$. Consequently,  up to a subsequence
$u_k\rightharpoonup u\;\mbox{ weakly in}\; W^{m,\frac{n}{m}}_{0}(\Om)$, $u_k\ra u\;\mbox{strongly in}\; L^{q}(\Om)\;\mbox{for all}\; q\in[1,\infty)$ and $
u_{k}(x)\ra u(x)\; \mbox{a.e in}\;\Om$. Then using the fact that $\{u_k\}$ is a bounded sequence together with equations \eqref{n7a1}, \eqref{n7a2}  and by Lemma \ref{lc1}, we obtain $\frac{f(x,u_k)}{|x|^\alpha}\ra \frac{f(x,u)}{|x|^\alpha}$ in $L^{1}(\Om)$.\\

\noi Claim 1: $\{u_k\}$ has a subsequence such that \eqref{ee3} holds.
\noi Indeed, since $\left\{|\na^m u_k|^{\frac{n}{m}-2}\na^m u_k\right\}$ is bounded in $L^{\frac{n}{n-m}}(\Om)$. Then, without loss of generality, we may assume that
\[|\na^m u_k|^{\frac{n}{m}} \lra \mu\;\mbox{in}\; D^{\prime}(\Om)\;\mbox{and} \]
\[|\na^m u_k|^{\frac{n}{m}-2}\na^{m}u_k \rightharpoonup \nu \; \mbox{weakly in}\; L^{\frac{n}{n-m}}(\Om),\]
\noi where $\mu$ is a non-negative regular measure and $D^{\prime}(\Om)$ are the distributions on $\Om$.

\noi Let $\sigma>0$ and $\mc A_{\sigma}= \{x\in\overline{\Om}: \fa r>0, \mu(B_{r}(x)\cap \overline{\Om})\geq \sigma\}$. We claim that $A_{\sigma}$ is a finite set.
Suppose by contradiction that there exists a sequence of distinct points $(x_s)$ in $\mc A_{\sigma}$. Since for all $r>0$,
$\mu(B_{r}(x)\cap \overline{\Om})\geq \sigma$, we have that $\mu(\{x_s\})\geq \sigma$. This implies that $\mu(\mc A_{\sigma})= +\infty$, however
\[\mu(\mc A_{\sigma})= \lim_{k\ra+\infty} \int_{\mc A_{\sigma}}|\na^{m} u_k|^{\frac{n}{m}} dx \leq C.\]
Thus $\mc A_{\sigma}=\{x_1, x_2,\cdots, x_p\}$.

\noi Let $u\in W^{m,\frac{n}{m}}_{0}(\Omega)$. We know that there exists positive constants $\beta_\alpha$, $C_2$ depending on $n$, $m$ such that
\[\int_{\Omega} \frac{e^{\beta_\alpha(\frac{|u|}{\|\na^m u\|_{{\frac{n}{m}}}})^{\frac{n}{n-m}}}}{|x|^\alpha} dx \leq C_2.\]

\noi {\bf Assertion 1}. If we choose $\sigma>0$ such that
$\sigma^{\frac{m}{n-m}}<\beta_\alpha$, then we have
\[\lim_{k\ra \infty}\int_{K} \frac{f(x,u_k)u_k}{|x|^\alpha}~ dx= \int_{K} \frac{f(x,u)u}{{|x|^\alpha}}~ dx,\]
for any relative compact subset $K$ of $\overline{\Om} \setminus \mc A_{\sigma}$.\\
Indeed, let $x_0\in K$ and
$r_0>0$ be such that $\mu(B_{r_0}(x_0)\cap \oline{\Om})<\sigma$. Consider a function $\phi\in C^{\infty}_{0}(\Om,[0,1])$ such that $\phi\equiv1$ in $B_{\frac{r_0}{2}}(x_0)\cap \overline{\Om}$ and $\phi\equiv0$ in $\overline{\Om}\setminus (B_{r_0}(x_0)\cap\oline{\Om})$.
Thus
\[\lim_{k\ra\infty} \int_{B_{r_0}(x_0)\cap \overline{\Om}}|\na^m u_k|^{\frac{n}{m}} \phi ~dx = \int_{B_{r_0}(x_0)\cap \overline{\Om}}\phi d\mu \leq \mu(B_{r_0}(x_0)\cap \overline{\Om})<\sigma. \]
Therefore for $k\in \mb N$ sufficiently large  and $\e>0$ sufficiently small, we have
\[ \int_{B_{\frac{r_0}{2}}(x_0)\cap \overline{\Om}}|\na^m u_k|^{\frac{n}{m}} ~dx = \int_{B_{\frac{r_0}{2}}(x_0)\cap \overline{\Om}}|\na^m u_k|^{\frac{n}{m}}\phi dx \leq (1-\e)\sigma,\]
which together with Theorem \ref{admo} implies
{\small \begin{align}\label{e9}
\int_{B_{\frac{r_0}{2}}(x_0)\cap \overline{\Om}} \left(\frac{|f(x,u_k)|}{|x|^\alpha}\right)^q dx &=\int_{B_{\frac{r_0}{2}}(x_0)\cap \overline{\Om}}|h(x,u_k)|^{q} \frac{e^{q|u_k|^{\frac{n}{n-m}}}}{|x|^{q\alpha}} dx\notag\\
 &\leq d \int_{B_{\frac{r_0}{2}}(x_0)\cap \overline{\Om}} \frac{e^{(1+\de)q|u_k|^{\frac{n}{n-m}}}}{|x|^{q\alpha}}dx\leq K
\end{align}}
if we choose $q>1$ sufficiently close to $1$ and $\de>0$ is small enough such that
$\frac{(1+\de)q \sigma^{\frac{m}{n-m}}}{\beta_\alpha}<1$. Now we estimate
\[\int_{B_{\frac{r_0}{2}}(x_0)\cap \overline{\Om}}\frac{|f(x,u_k)u_k-f(x,u)u|}{{|x|^\alpha}} ~dx \leq I_1+ I_2\]
where \[I_1 = \int_{B_{\frac{r_0}{2}}(x_0)\cap \overline{\Om}}\frac{|f(x,u_k)- f(x,u)||u|}{|x|^\alpha} dx \;\mbox{and}\; I_2 = \int_{B_{\frac{r_0}{2}}(x_0)\cap \overline{\Om}}\frac{|f(x,u_k)||u_k-u|}{|x|^\alpha}~ dx.\]

\noi Note that, by H\"{o}lder's inequality and \eqref{e9},
\[I_2= \int_{B_{\frac{r_0}{2}}(x_0)\cap \overline{\Om}}\frac{|f(x,u_k)||u_k-u|}{|x|^\alpha}dx \leq K \left(\int_{\Om}|u_k-u|^{q^{\prime}}dx\right)^{1/{q^{\prime}}}\ra 0\; \mbox{as}\; k\ra \infty.\]
Now, we claim that $I_1\ra 0$. Indeed, given $\e>0$, by density we can take $\phi\in C_{0}^{\infty}(\Om)$ such that $\|u-\phi\|_{q^{\prime}}<\e$. Thus,
{\small \begin{align*}
\int_{B_{\frac{r_0}{2}}(x_0)\cap \overline{\Om}}\frac{|f(x,u_k)- f(x,u)||u|}{|x|^\alpha}dx &\leq \int_{B_{\frac{r_0}{2}}(x_0)\cap \overline{\Om}}(\frac{|f(x,u_k)|}{|x|^\alpha}|u-\phi| + \frac{|f(x,u_k)- f(x,u)|}{|x|^\alpha}|\phi|)dx \\
& \quad\quad +\int_{B_{\frac{r_0}{2}}(x_0)\cap \overline{\Om}}\frac{|f(x,u)|}{|x|^\alpha}|\phi-u|dx.
\end{align*}}
Applying H\"{o}lder's inequality and using \eqref{e9} , we have
\[\int_{B_{\frac{r_0}{2}}(x_0)\cap \overline{\Om}}\frac{|f(x,u_k)|}{|x|^\alpha}|u-\phi| ~dx\leq \left(\int_{B_{\frac{r_0}{2}}(x_0)\cap \overline{\Om}}\left(\frac{|f(x,u_k)|}{|x|^\alpha}\right)^q dx\right)^{1/q}\|u-\phi\|_{q^{\prime}}<\e.\]

\noi Using Lemma \ref{lc1}, we get
\[\int_{B_{\frac{r_0}{2}}(x_0)\cap \overline{\Om}}\frac{|f(x,u_k)- f(x,u)|}{|x|^\alpha}|\phi|~ dx \leq \|\phi\|_{\infty}\int_{B_{\frac{r_0}{2}}(x_0)\cap \overline{\Om}}
\frac{|f(x,u_k)- f(x,u)|}{|x|^\alpha} dx\ra 0.\]
Also from equation \eqref{e9}, we have
\[\int_{B_{\frac{r_0}{2}}(x_0)\cap \overline{\Om}}\frac{|f(x,u)|}{|x|^\alpha}|\phi- u| dx\ra 0\]
and hence the claim. Now to conclude Assertion 1 we use that $K$ is  compact and we repeat the same procedure over a finite covering of balls.

\noi {\bf Assertion 2.} Let $\e_0>0$ be fixed and small enough such that $B_{\e_{0}}(x_i)\cap B_{\e_{0}}(x_j)=\emptyset$ if $i\ne j$ and $\Om_{\e_{0}}= \{x\in \overline{\Om}: \|x-x_j\|\geq \e_0, j=1, 2,\cdots p\}$. Then
\[\int_{\Om_{\e_{0}}}(|\na^{m}u_k|^{\frac{n}{m}-2}\na^m u_k - |\na^{m}u|^{\frac{n}{m}-2}\na^m u)(\na^{m}u_{k}-\na^{m}u)~dx \ra 0.\]
Indeed, let $0<\e<\e_0$ and $\phi\in C^{\infty}_{0}(\mb R^{n}, [0,1])$ be such that $\phi\equiv 1$ in $B_{1/2}(0)$ and
 $\phi=0$ in $\overline{\Om}\setminus B_{1}(0)$. Taking
 \[\psi_{\e}(x)= 1-\sum_{j=1}^{p}\phi\left(\frac{x-x_j}{\e}\right),\]
we have $0\leq \psi_{\e}\leq 1$, $\psi_{\e}\equiv 1$ in $\overline{\Om}_{\e}=\overline{\Om}\setminus \cup_{j=1}^{p}B(x_j,{\e})$, $\psi_{\e}\equiv 0$ in $\cup_{j=1}^{p}B(x_{j},\frac{\e}{2})$ and $\psi_{\e}u_{k}$ is bounded sequence in
$W^{m,\frac{n}{m}}_{0}(\Om)$, for each $\e$. Using \eqref{n7a2} with $v=\psi_{\e}u_{k}$, we have
\[M(\|u_k\|^\frac{n}{m})\int_{\Om} |\na^{m}u_k|^{\frac{n}{m}-2}\na^m u_k \na^{m}(\psi_{\e}u_{k})~dx - \int_{\Om} \frac{f(x,u_k)}{|x|^\alpha}\psi_{\e}u_{k} dx \leq \e_{k}\|\psi_{\e}u_k\|,\]
which implies that
\begin{align}\label{e1}
M(\|u_k\|^\frac{n}{m})\int_{\Om} |\na^{m}u_k|^{\frac{n}{m}}& \psi_{\e}dx+ \sum_{l=1}^{m} \binom{m}{l} \int_{\Om} |\na^{m}u_k|^{\frac{n}{m}-2}\na^m u_k\na^{l}\psi_{\e}\na^{m-l}u_{k}dx\notag\\
&- \int_{\Om} \frac{f(x,u_k)}{|x|^\alpha}\psi_{\e}u_{k} dx \leq \e_{k}\|\psi_{\e}u_k\|.
\end{align}

\noi Now using \eqref{n7a2} with $v=-\psi_{\e}u$, we have
\[M(\|u_k\|^\frac{n}{m})\int_{\Om} |\na^{m}u_k|^{\frac{n}{m}-2}\na^m u_k \na^{m}(- \psi_{\e}u) ~dx + \int_{\Om} \frac{f(x,u_k)}{|x|^\alpha}\psi_{\e}u ~dx \leq \e_{k}\|\psi_{\e}u\|.\]
Hence
\begin{align}\label{e2}
-M(\|u_k\|^\frac{n}{m})\int_{\Om} |\na^{m}u_k|^{\frac{n}{m}-2}&\na^m u_k \na^{m}u \psi_{\e} ~dx - \sum_{l=1}^{m}\binom{m}{l} \int_{\Om} |\na^{m}u_k|^{\frac{n}{m}-2}\na^m u_k\na^{l}\psi_{\e}\na^{m-l}u ~dx\notag\\
& +  \int_{\Om} \frac{f(x,u_k)}{|x|^\alpha}\psi_{\e}u ~dx \leq \e_{k}\|\psi_{\e}u\|.
\end{align}

\noi Using the strict convexity of function $t\mapsto |t|^{n}$, we obtain that
\[0\leq M(\|u_k\|^\frac{n}{m})(|\na^{m}u_k|^{\frac{n}{m}-2}\na^m u_k - |\na^{m}u|^{\frac{n}{m}-2}\na^m u)(\na^{m}u_{k}-\na^{m}u)\]
and consequently
\begin{align*}
0&\leq M(\|u_k\|^\frac{n}{m})\int_{\overline{\Om}_{\e_{0}} }(|\na^{m}u_k|^{\frac{n}{m}-2}\na^m u_k - |\na^{m}u|^{\frac{n}{m}-2}\na^m u)(\na^{m}u_{k}-\na^{m}u)~dx\\
&\leq M(\|u_k\|^\frac{n}{m})\int_{\Om}(|\na^{m}u_k|^{\frac{n}{m}-2}\na^m u_k - |\na^{m}u|^{\frac{n}{m}-2}\na^m u)(\na^{m}u_{k}-\na^{m}u)~dx
\end{align*}

\noi which can be written as
\begin{align}\label{e3}
0 \leq M(\|u_k\|^\frac{n}{m})\int_{\Om}[|\na^{m}u_k|^{\frac{n}{m}}\psi_{\e}-& |\na^{m}u_k|^{\frac{n}{m}-2}\psi_{\e}\na^m u_k\na^{m}u\notag\\
&- |\na^{m}u|^{\frac{n}{m}-2}\psi_{\e}\na^m u\na^{m}u_{k}+ |\na^{m}u|^{\frac{n}{m}}\psi_{\e}]~dx{.}
\end{align}

\noi From \eqref{e1}, \eqref{e2} and \eqref{e3}, we obtain
\begin{align}\label{e5}
0&\leq M(\|u_k\|^\frac{n}{m}) \sum_{l=1}^{m}\binom{m}{l} \int_{\Om} |\na^{m}u_k|^{\frac{n}{m}-2}\na^m u_k\na^{l}\psi_{\e}\na^{m-l}(u -u_k)~dx
+ \int_{\Om} \frac{f(x,u_k)}{|x|^\alpha}\psi_{\e}(u_k-u) ~dx\notag\\
&\quad\quad +M(\|u_k\|^\frac{n}{m})\int_{\Om}|\na^{m}u|^{\frac{n}{m}-2}\na^m u(\na^{m}u- \na^{m}u_{k})\psi_{\e}dx +\e_{k}(\|\psi_{\e}u_k\|+\|\psi_{\e}u\|).
\end{align}
Now we estimate each integral in \eqref{e5}, separately. For arbitrary $\de>0$, using the interpolation inequality
$ab\leq \de a^{\frac{n}{n-m}}+ C_{\de} b^{\frac nm}$ with $C_{\de}= \de^{1-\frac{n}{m}}$ and boundedness of Palais-Smale sequence, we have for all $0\leq r\leq m-1$ and for any $l$,
{\small \begin{align*}
M(\|u_k\|^\frac{n}{m})\int_{\Om} |\na^{m}u_k|^{\frac{n}{m}-2}&\na^m u_k\na^{l}\psi_{\e}(\na^{r}u -\na^{r}u_k)dx\\&\leq C_M\left(\de\int_{\Om} |\na^{m}u_k|^{\frac{n}{m}}dx+ C_{\de}\int_{\Om}|\na^l \psi_{\e}|^{\frac{n}{m}}|\na^{r}u -\na^{r}u_k|^{\frac{n}{m}}dx\right)\\
&\leq C_M\de K + C_MC_{\de} \left(\int_{\Om}|\na^l \psi_{\e}|^{\frac{n}{m}t}dx\right)^{1/t}\left(\int_{\Om}|\na^{r}u -\na^{r}u_k|^{\frac{n}{m}s}dx\right)^{1/s},
\end{align*}}
\noi where $\frac{1}{s}+\frac{1}{t}=1$. Thus since for all $0\leq r\leq m-1$, $\na^{r}u_k \ra \na^{r}u$ strongly in $L^{\frac{ns}{m}}(\Om)$ and
$\de$ is arbitrary, we obtain that
\begin{align}\label{e6}
\limsup_{k\ra\infty} M(\|u_k\|^\frac{n}{m})\int_{\Om} |\na^{m}u_k|^{\frac{n}{m}-2}\na^m u_k\na^{l}\psi_{\e}(\na^{r}u - \na^{r}u_k)~dx \leq 0\; \mbox{for all}\; 0\leq r\leq m-1{.}
\end{align}
Using that $u_k\rightharpoonup u$ weakly in $W^{m,\frac{n}{m}}_{0}(\Om)$, we obtain
\begin{align}\label{e7}
\int_{\Om} |\na^{m}u|^{\frac{n}{m}-2}\na^m u (\na^{m-l}u -\na^{m-l}u_k)\psi_{\e} ~dx\lra 0\;\mbox{as}\;k\ra \infty.
\end{align}
We claim that $\int_{\Om} \frac{f(x,u_k)(u_k-u)}{|x|^\alpha} \psi_{\e} ~dx \ra 0$ as $k\ra \infty$.
Indeed,
\begin{align*}
\int_{\Om} \psi_{\e} \frac{f(x,u_k)(u_k-u)}{|x|^\alpha}dx  = \int_{\Om} \psi_{\e} \frac{f(x,u_k)u_k}{|x|^\alpha} &-  \int_{\Om} \psi_{\e} \frac{f(x,u)u}{|x|^\alpha} dx\\& +\int_{\Om} \psi_{\e} \frac{f(x,u)u}{|x|^\alpha}dx - \int_{\Om} \psi_{\e} \frac{f(x,u_k)u}{|x|^\alpha} dx{.}
\end{align*}
Applying Assertion 1 with $g(x,u)= \psi_{\e}(x) f(x,u)$ and $K=\overline{\Om}_{\e/2}$, we have that
\begin{align}\label{e8}
 \int_{\Om} \psi_{\e} \frac{f(x,u_k)u_k}{|x|^\alpha} ~dx = \int_{\overline{\Om}_{\frac{\e}{2}}} \psi_{\e} \frac{f(x,u_k)u_k}{|x|^\alpha} ~dx\lra \int_{\overline{\Om}_{\frac{\e}{2}}} \psi_{\e} \frac{f(x,u)u}{|x|^\alpha} dx=  \int_{\Om} \psi_{\e} \frac{f(x,u)u}{|x|^\alpha}~ dx,
\end{align}
as $k\ra\infty$. Hence from \eqref{e5}, \eqref{e6}, \eqref{e7} and \eqref{e8}, we have
\begin{equation}\label{usela}
\limsup_{k\ra\infty}M(\|u_k\|^\frac{n}{m})\sum_{l=1}^{m}\binom{m}{l} \int_{\Om} |\na^{m}u_k|^{\frac{n}{m}-2}\na^m u_k \na^l\psi_{\e}(\na^{m-l}u -\na^{m-l}u_k)~dx =0.
\end{equation}

\noi Now, using $\langle J^{\prime}(u_k), \psi_{\e}(u_k- u)\rangle\lra 0$, we conclude the Assertion 2.
Finally using Assertion 2, since $\e_{0}$ is arbitrary we obtain that
\[\na^{m}u_k(x) \ra \na^m u(x)\; \mbox{almost everywhere in }\; \Om,\]
which together with the fact that the sequence $\left\{|\na^m u_k|^{\frac{n}{m}-2}\na^m u_k\right\}$ is bounded in $L^{\frac{n}{n-m}}(\Om)$, implies
\[|\na^m u_k|^{\frac{n}{m}-2}\na^m u_k \rightharpoonup |\na^m u|^{\frac{n}{m}-2}\na^m u\;\mbox{is weakly convergent in}\; L^{\frac{n}{n-m}}(\Om),\]
up to a subsequence. Thus we conclude the proof of Lemma.\QED
\end{proof}


\noi Now we define the Nehari manifold associated to the functional $\mathcal J$, as
\[ \mc{N}:=\{0\not\equiv u\in W_0^{m,\frac{n}{m}}(\Omega):\langle \mathcal J^{\prime}(u),u \rangle=0\}\]
and let $\ds b:=\inf_{u\in \mc{N}} \mathcal J(u)$. Then we need the following Lemma to compare  $c_*$ and $b$.
From the fact that $\frac{f(x,t)}{t^{\frac{2n}{m}-1}}$ increasing we deduce the following
\begin{Lemma}\label{lem7.3}
If condition $(f1)$ holds, then for each $x\in\Omega$, $ sf(x,s)-\frac{2n}{m}F(x,s)$ is increasing for $s\ge0$. In particular $sf(x,s)-\frac{2n}{m}F(x,s)\geq 0$ for all $(x,s)\in \Omega\times [0,\infty)$.
\end{Lemma}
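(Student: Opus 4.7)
The plan is to exploit the monotonicity encoded in $(f4)$ (the reference to ``$(f1)$'' in the statement appears to be a typo, as $(f1)$ alone contains no monotonicity information). Set $\phi(x,t) := f(x,t)/t^{(2n-m)/m}$, which by $(f4)$ is non-decreasing in $t>0$ for each fixed $x\in\Omega$, and define
\begin{equation*}
g(x,s) := sf(x,s) - \frac{2n}{m}F(x,s).
\end{equation*}
I will first establish the pointwise bound $g(x,s)\geq 0$ via an integral representation, and then the monotonicity of $g(x,\cdot)$ on $[0,\infty)$ by a monotone-average comparison.

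The key manipulation is to substitute $f(x,t) = \phi(x,t)\,t^{(2n-m)/m}$, which gives
\begin{equation*}
sf(x,s) = \phi(x,s)\,s^{2n/m},\qquad F(x,s) = \int_0^s \phi(x,t)\,t^{(2n-m)/m}\,dt,
\end{equation*}
and combined with the elementary identity $s^{2n/m} = \tfrac{2n}{m}\int_0^s t^{(2n-m)/m}\,dt$ rewrites
\begin{equation*}
g(x,s) \;=\; \frac{2n}{m}\int_0^s \bigl[\phi(x,s)-\phi(x,t)\bigr]\,t^{(2n-m)/m}\,dt \;\geq\; 0,
\end{equation*}
since the integrand is non-negative by $(f4)$.

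For the monotonicity, I fix $0\leq s_1 < s_2$ and write
\begin{equation*}
g(x,s_2)-g(x,s_1) \;=\; \phi(x,s_2)\,s_2^{2n/m} - \phi(x,s_1)\,s_1^{2n/m} - \frac{2n}{m}\int_{s_1}^{s_2}\phi(x,t)\,t^{(2n-m)/m}\,dt,
\end{equation*}
and bound the integral above via $\phi(x,t)\leq\phi(x,s_2)$ on $[s_1,s_2]$ together with $\tfrac{2n}{m}\int_{s_1}^{s_2}t^{(2n-m)/m}\,dt = s_2^{2n/m}-s_1^{2n/m}$. Cancellation of the $\phi(x,s_2)\,s_2^{2n/m}$ terms leaves
\begin{equation*}
g(x,s_2)-g(x,s_1) \;\geq\; s_1^{2n/m}\bigl[\phi(x,s_2)-\phi(x,s_1)\bigr] \;\geq\; 0,
\end{equation*}
again by $(f4)$.

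No serious obstacle is expected; the argument is purely algebraic. Since $f$ is $C^1$ in $s$ by $(f1)$, one could alternatively differentiate to obtain $g_s(x,s) = sf_s(x,s) - \tfrac{2n-m}{m}f(x,s)\geq 0$, which is exactly $(f4)$ rephrased via the product rule; but the integral approach above is cleaner and requires no smoothness.
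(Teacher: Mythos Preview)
Your proof is correct and aligns with the paper's approach: the paper does not write out a proof but simply prefaces the lemma with ``From the fact that $\frac{f(x,t)}{t^{\frac{2n}{m}-1}}$ increasing we deduce the following,'' i.e., it too relies on $(f4)$ (confirming your suspicion that the reference to $(f1)$ is a typo). Your integral representation and monotone-average comparison supply exactly the details the paper omits, and your remark about the differentiation alternative is also valid since $(f1)$ gives $f\in C^1$.
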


\begin{Lemma}:
If (i) $\frac{m(t)}{t}$ is nonincreasing for $t> 0$ (ii) for each $x\in\Omega, \frac{f(x,t)}{t^{\frac{2n-m}{m}}}$ is increasing for $t>0$ hold.
Then $c_*\leq b $.
\end{Lemma}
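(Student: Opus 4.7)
The plan is, for an arbitrary $u\in\mathcal N$, to construct an explicit mountain-pass path passing through $u$ along which $\mathcal J$ attains its maximum precisely at $u$; taking the infimum over $u\in\mathcal N$ then yields $c_*\leq b$.

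\textbf{Fiber analysis.} For $u\in\mathcal N$, $u\not\equiv 0$, set $g(t):=\mathcal J(tu)$ for $t>0$. Differentiating,
\[
g'(t)=M\bigl(t^{n/m}\|u\|^{n/m}\bigr)\,t^{n/m-1}\|u\|^{n/m}-\int_\Omega\frac{f(x,tu)\,u}{|x|^\alpha}\,dx,
\]
and $g'(1)=0$ since $u\in\mathcal N$. The key observation is that, after dividing by $t^{(2n-m)/m}$, both summands become monotone in $t$ of opposite type. Indeed, $f(x,s)=0$ for $s\le 0$ by $(f1)$, so only $\{u>0\}$ contributes and I may rewrite
\[
\frac{g'(t)}{t^{(2n-m)/m}}=\|u\|^{2n/m}\cdot\frac{M\bigl(t^{n/m}\|u\|^{n/m}\bigr)}{t^{n/m}\|u\|^{n/m}}-\int_{\{u>0\}}\frac{f(x,tu)}{(tu)^{(2n-m)/m}}\cdot\frac{u^{2n/m}}{|x|^\alpha}\,dx.
\]
Hypothesis (i) (i.e.\ $(m3)$) makes the first summand nonincreasing in $t$, while (ii) (the monotonicity part of $(f4)$) makes the integrand pointwise strictly increasing in $t$ on $\{u>0\}$. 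The set $\{u>0\}$ has positive measure because $\langle\mathcal J'(u),u\rangle=0$ together with $M\ge M_0>0$ forces $\int_\Omega f(x,u)u/|x|^\alpha\,dx>0$. Combining these monotonicities with $g'(1)=0$ yields $g'(t)>0$ on $(0,1)$ and $g'(t)<0$ on $(1,\infty)$, so $t=1$ is the unique global maximizer of $g$ on $(0,\infty)$ and $\max_{t\ge 0}\mathcal J(tu)=\mathcal J(u)$.

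\textbf{Path construction.} I next need $T>1$ with $\mathcal J(Tu)<0$. The argument from the second mountain-pass lemma applies verbatim with $tu$ in place of $t\phi_0$: $(f3)$ gives $F(x,s)\ge C_1 s^\theta-C_2$ for arbitrarily large $\theta$, $(m2)$ bounds $\widehat M(t^{n/m}\|u\|^{n/m})$ by a polynomial in $t$, and choosing $\theta>\max\{n/m,\,n(\sigma+1)/m\}$ together with $\int_\Omega|u|^\theta/|x|^\alpha\,dx>0$ forces $\mathcal J(tu)\to-\infty$ as $t\to+\infty$. Fix such a $T$ and define $\gamma(s):=sTu$ for $s\in[0,1]$. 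Then $\gamma(0)=0$, $\mathcal J(\gamma(1))=\mathcal J(Tu)<0$, so $\gamma\in\Gamma$. By the previous step,
\[
\max_{s\in[0,1]}\mathcal J(\gamma(s))=\max_{t\in[0,T]}\mathcal J(tu)=\mathcal J(u),
\]
whence $c_*\le\mathcal J(u)$. Taking the infimum over $u\in\mathcal N$ gives $c_*\le b$ (the case $\mathcal N=\emptyset$ is vacuous, $b=+\infty$).

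\textbf{Main obstacle.} The only real subtlety is the decomposition of $g'(t)/t^{(2n-m)/m}$ in the fiber analysis: one must arrange the expression so that $(m3)$ controls exactly the $M$-part (via $M(s)/s$ evaluated at $s=t^{n/m}\|u\|^{n/m}$) and $(f4)$ controls exactly the $f$-part (via $f(x,s)/s^{(2n-m)/m}$ evaluated at $s=tu$), with the two monotonicities pushing in opposite directions across $t=1$. Once this bookkeeping is done, the conclusion is immediate from the standard deformation of the Nehari point along its fiber to a mountain-pass path.
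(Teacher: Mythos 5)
Your proof is correct and follows essentially the same route as the paper's: you analyze the fiber $t\mapsto\mathcal J(tu)$, use $(m3)$ and $(f4)$ to show $t=1$ is the global maximizer when $u\in\mathcal N$, then deform the ray $[0,T]u$ into an admissible mountain-pass path. The paper factors $h'(t)$ as $\|u\|^{n/m}t^{(2n-m)/m}$ times a bracket that is explicitly zero at $t=1$ (by subtracting the $t=1$ values using $\langle\mathcal J'(u),u\rangle=0$), while you instead observe that $g'(t)/t^{(2n-m)/m}$ is a sum of a nonincreasing and a strictly decreasing function, vanishing at $t=1$; these are equivalent presentations of the same monotonicity argument, and you additionally make explicit two small points the paper leaves implicit, namely that $\{u>0\}$ has positive measure and that the $\mathcal J(tu)\to-\infty$ estimate from the second geometry lemma carries over from $\phi_0$ to an arbitrary nonzero $u$.
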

\proof Let $u\in \mc{N}$, define $h:(0,+\infty)\rightarrow \mb R$ by $ h(t)=\mathcal J(tu)$. Then using $\langle \mathcal J^\prime(u), u\rangle=0$, we get

 {\small
\begin{align*} h^{\prime}(t)=\|u\|^{\frac{n}{m}} t^{\frac{2n-m}{m}}\left[\left(\frac{M(t^\frac{n}{m}\|u\|^\frac{n}{m})}{t^\frac{n}{m}\|u\|^\frac{n}{m}}-\frac{M(\|u\|^\frac{n}{m})}{\|u\|^\frac{n}{m}}\right)
 +\int_{\Om}\left(\frac{f(u)}{|x|^\alpha (u)^{\frac{2n-m}{m}}}-\frac{f(tu)}{|x|^\alpha(tu)^{\frac{2n-m}{m}}}\right)u^{\frac{2n}{m}}dx\right].\end{align*} }
So $h^{\prime}(1)=0$, $h^{\prime}(t)\geq 0$ for $0<t<1$ and $h^{\prime}(t)<0$ for $t>1$. Hence $\mathcal J(u)=\ds \max_{t\geq0}\mathcal J(tu)$. Now define $g:[0,1]\rightarrow W_0^{m,\frac{n}{m}}(\Omega)$ as  $g(t)=(t_0 u)t$, where $t_0$ is such that $\mathcal J(t_0 u)<0$. We have $g\in \Gamma$ and therefore
\[ c_*\leq\max_{t\in[0,1]}\mathcal J(g(t))\leq \max_{t\geq 0}\mathcal J(tu)=\mathcal J(u).\]
  Since $u\in \mc N$ is arbitrary, $c_*\leq b$ and the proof is complete.\QED

\noi We recall the following singular version of higher integrability Lemma due to Lions \cite{li}. Proof follows from Theorem \ref{admo} and Brezis-Lieb Lemma. For $\alpha=0$, details of the proof can be seen in \cite{sgana}.
\begin{Lemma}\label{pllion}
Let $\{u_k : \|u_k\|=1\}$ be a sequence in $W^{m,\frac{n}{m}}_{0}(\Om)$
converging weakly to a non-zero $u$ and $\na^mu_k\rightarrow \na^mu$ converges pointwise almost every where in $\Omega$.Then for every $p$ such that
$1<p<(1-\frac{\alpha}{n})(1-\|u\|^\frac{n}{m})^{\frac{-m}{n-m}}$,\[\sup_{k}\int_{\Om} \frac{e^{p \beta_{\alpha}|u_k|^{\frac{n}{n-m}}}}{|x|^\alpha}dx< \infty.\]
\end{Lemma}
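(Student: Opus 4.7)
The plan is to combine the Brezis-Lieb lemma with the sharp singular Adams-Moser inequality of Theorem~\ref{admo}, first replacing the weak limit $u$ by a smooth bounded approximation so that the piece of the exponential coming from $u$ itself collapses to a harmless multiplicative constant. Since $\na^m u_k\to\na^m u$ a.e.\ in $L^{n/m}(\Om)$ with $\|u_k\|=1$, Brezis-Lieb applied to $\na^m u_k$ gives
\[\|u_k-u\|^{n/m}=1-\|u\|^{n/m}+o(1),\]
so that $\|u_k-u\|^{n/(n-m)}\to A:=(1-\|u\|^{n/m})^{m/(n-m)}$. The hypothesis $p<(1-\alpha/n)A^{-1}<A^{-1}$ then supplies the strict slack $pA<1$ that the rest of the argument will consume.

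Next, I would fix $\e>0$ so small that $p(1+\e)A<1$ and pick $\varphi\in C^\infty_c(\Om)$ with $\|u-\varphi\|$ so small that, by the triangle inequality combined with the above estimate, $\|u_k-\varphi\|^{n/(n-m)}\le A+\eta$ holds for every $k$ large and some $\eta>0$ still fulfilling $p(1+\e)(A+\eta)\le 1$. Applying the pointwise convexity inequality $(a+b)^{q}\le (1+\e)a^{q}+C_\e b^{q}$ with $q=n/(n-m)>1$ to $u_k=(u_k-\varphi)+\varphi$ yields
\[|u_k|^{n/(n-m)}\le(1+\e)\,|u_k-\varphi|^{n/(n-m)}+C_\e\,\|\varphi\|_\infty^{n/(n-m)},\]
and exponentiating gives
\[\int_{\Om}\frac{e^{p\beta_\alpha|u_k|^{n/(n-m)}}}{|x|^\alpha}\,dx\le e^{p\beta_\alpha C_\e\|\varphi\|_\infty^{n/(n-m)}}\int_{\Om}\frac{e^{p\beta_\alpha(1+\e)|u_k-\varphi|^{n/(n-m)}}}{|x|^\alpha}\,dx.\]
The first factor is a fixed finite constant because $\varphi\in L^\infty(\Om)$. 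For the remaining integral, letting $v_k:=(u_k-\varphi)/\|u_k-\varphi\|$ (a unit element of $W^{m,n/m}_0(\Om)$), the exponent reads $\big(p(1+\e)\|u_k-\varphi\|^{n/(n-m)}\big)\beta_\alpha|v_k|^{n/(n-m)}\le\beta_\alpha|v_k|^{n/(n-m)}$ by construction, so Theorem~\ref{admo} furnishes a bound uniform in $k$.

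The main obstacle I expect is precisely this pointwise splitting: the constant $C_\e$ in $(a+b)^q\le(1+\e)a^q+C_\e b^q$ blows up as $\e\to 0^+$, so one cannot simply keep $u$ in place and handle the $C_\e|u|^{n/(n-m)}$ piece via a Moser-Adams bound on $u$ — a generic weak limit $u\in W^{m,n/m}_0(\Om)$ need not satisfy $\int e^{\lambda|u|^{n/(n-m)}}/|x|^\alpha\,dx<\infty$ for arbitrarily large $\lambda$. Swapping $u$ for a bounded $\varphi\in C^\infty_c(\Om)$ converts the offending exponential into a true constant at the cost of a triangle-inequality error in $\|u_k-\varphi\|$, and the correct order of choices — first $\e$, then $\varphi$ (hence $C_\e$ and $\|\varphi\|_\infty$), then $k$ large — is what lets the Brezis-Lieb tail and this approximation error both fit inside the slack $p(1+\e)A<1$ given by the hypothesis on $p$.
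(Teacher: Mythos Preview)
Your argument is correct and follows exactly the route the paper indicates: the paper's own proof is the single sentence ``Proof follows from Theorem~\ref{admo} and Brezis--Lieb Lemma,'' with details deferred to \cite{sgana}, and your Brezis--Lieb plus smooth-approximation plus singular Adams--Moser scheme is precisely the standard way to fill this in. One small remark: your argument only consumes the inequality $pA<1$, i.e.\ $p<(1-\|u\|^{n/m})^{-m/(n-m)}$, and never uses the extra factor $(1-\alpha/n)$ in the stated hypothesis --- so you in fact prove a slightly stronger statement than the lemma as written, which is harmless here; also note that the finitely many indices $k$ excluded by ``$k$ large'' are handled by the fact that $\int_\Om e^{\lambda|v|^{n/(n-m)}}|x|^{-\alpha}\,dx<\infty$ for every fixed $v\in W^{m,n/m}_0(\Om)$ and every $\lambda>0$ (the same approximation trick applied to a single function).
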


\noi {\bf Proof of Theorem \ref{main}:} Let $\{u_k\}$ be a Palais-Smale sequence at level $c_*$. That is $\mathcal J(u_k)\rightarrow c_*$ and $\mathcal J^\prime (u_k) \rightarrow 0$. Then by Lemma 2.4 and Lemma 2.7, there exists $u_0\in W^{m,\frac{n}{m}}_{0}(\Om)$ such that $u_k \rightharpoonup u_0$ weakly in $W^{m,\frac{n}{m}}_{0}(\Om)$, $\na^m u_k (x) \rightarrow \na^m u_0(x)$ a.e. in $\Om$.
As $\{u_k\}$ is bounded, so up to a subsequence $\|u_k\|\rightarrow \rho_0>0$. Moreover, condition $\mathcal J^{\prime}(u_k)\rightarrow 0$ and Lemma \ref{IC2} implies that
 \begin{equation}\label{n7a12}
 M(\rho_0^\frac{n}{m})\int_\Omega |\nabla^m u_0|^{\frac{n}{m}-2}\nabla^m u_0\nabla^mv~ dx=\int_\Om \frac{f(x,u_0)}{{|x|^\alpha}}v ~dx\;  \text{for all} \; v\in W_0^{m,\frac{n}{m}}(\Omega).
 \end{equation}
 Now we claim that $u_0$ is the required nontrivial solution.\\
\noi \textbf{Claim 1:} $\ds M(\|u_0\|^\frac{n}{m})\|u_0\|^\frac{n}{m}\geq \int_\Om \frac{f(x,u_0)u_0}{{|x|^\alpha}} dx$.\\
\proof Suppose by contradiction that $M(\|u_0\|^\frac{n}{m})\|u_0\|^\frac{n}{m}< \int_\Om \frac{f(x,u_0)u_0}{|x|^\alpha} ~dx$. That is,\\
 $\langle \mathcal J^{\prime}(u_0),u_0\rangle<0.$ Using \eqref{n1} and Sobolev imbedding, we can see that $\langle \mathcal J^{\prime}(tu_0),u_0\rangle>0$ for t sufficiently small. Thus there exist $\sigma\in(0,1)$ such that $\langle \mathcal J^{\prime}(\sigma u_0),u_0\rangle=0$. That is,  $\sigma u_0\in \mc N$. Thus according to  Lemma \ref{lem7.3},
 \begin{align*}
 c_*&\leq b \leq \mathcal J(\sigma u_0)=\mathcal J(\sigma u_0)-\frac{m}{2n}\langle \mathcal J^{\prime}(\sigma u_0),\sigma u_0\rangle\\
 &=\frac{\widehat M(\|\sigma u_0\|^\frac{n}{m})}{\frac{n}{m}}-\frac{M(\|\sigma u_0\|^\frac{n}{m})\|\sigma u_0\|^\frac{n}{m}}{\frac{2n}{m}}+\frac{m}{2n}\int_\Om \frac{(f(x,\sigma u_0)\sigma u_0-\frac{2n}{m}F(x,
 \sigma u_0))}{|x|^\alpha}\\
 &<\frac{m}{n}\widehat M(\| u_0\|^\frac{n}{m})-\frac{m}{2n}M(\|u_0\|^\frac{n}{m})\| u_0\|^\frac{n}{m}+\frac{m}{2n}\int_\Om \frac{(f(x, u_0)u_0-\frac{2n}{m}F(x,u_0))}{|x|^\alpha}dx
 \end{align*}
 By lower semicontinuity of norm and Fatou's Lemma, we get
 \begin{align*}
 c_*&<  \liminf_{k\rightarrow\infty}\frac{m}{n}\widehat M(\|u_k\|^\frac{n}{m})-\frac{m}{2n}M(\|u_k\|^{\frac{n}{m}})\| u_k\|^\frac{n}{m}\\
 &\quad+\liminf_{k\rightarrow\infty}\frac{m}{2n}\int_\Om \frac{\left(f(x, u_k)u_k-\frac{2n}{m}F(x,u_k)\right)}{|x|^\alpha}dx\\
 &\leq \lim_{k\rightarrow\infty}\left(\mathcal J(u_k)-\frac{m}{2n}\langle \mathcal J^{\prime}(u_k),u_k\rangle\right)=c_*,
 \end{align*}
 which is a contradiction and the claim 2 is proved.\\
\noi \textbf{Claim 2:} $J(u_0)=c_*$.\\
\proof Using $ \int_\Om \frac{F(x,u_k)}{|x|^\alpha}dx\rightarrow \int_\Om \frac{F(x,u_0)}{|x|^\alpha}dx$ and lower semicontinuity of norm we have\\
 $\mathcal J(u_0)\leq c_*$. We are going to show that the case $\mathcal J(u_0)<c_*$ can not occur.\\
 Indeed,  if $\mathcal J(u_0)<c_*$ then $\|u_0\|^\frac{n}{m}<\rho_0^\frac{n}{m}.$ Moreover,
 \begin{equation}\label{7a3}
   \frac{m}{n}\widehat M(\rho_0^\frac{n}{m})=\lim_{k\rightarrow\infty}\frac{m}{n}\widehat M(\|u_k\|^\frac{n}{m})=c_*+\int_\Om \frac{F(x,u_0)}{|x|^\alpha}dx,
 \end{equation}

  which implies  $\ds \rho_0^\frac{n}{m} = \widehat M^{-1}\left(\frac{n}{m}c_*+\frac{n}{m}\int_\Om \frac{F(x,u_0)}{|x|^\alpha}dx\right)$.\\
  \noi Next defining $v_k=\frac{u_k}{\|u_k\|}$ and $v_0=\frac{u_0}{\rho_0}$, we have $v_k\rightharpoonup v_0$ in $W_0^{m,\frac{n}{m}}(\Omega)$ and $\|v_0\|<1$. Thus by Lemma \ref{pllion},
\begin{equation}\label{7a4}
 \sup_{k\in \mb N}\int_\Omega \frac{e^{p|v_k|^{\frac{n}{n-m}}}}{|x|^\alpha}~dx<\infty\;\;  \text{for all}\;  1<p<\left(1-\frac{\alpha}{n}\right)\beta_{\alpha}{\left(1-\|v_0\|^\frac{n}{m}\right)^\frac{-m}{n-m}}.
 \end{equation}
\noi On the other hand, by Assertion 2, \eqref{7a0} and Lemma \ref{lem7.3}, we have
\[  \mathcal J(u_0)\ge \frac{m}{n}\widehat M(\|u_0\|^\frac{n}{m})-\frac{2m}{n}M(\|u_0\|^\frac{n}{m})\|u_0\|^\frac{n}{m}+\frac{m}{2n}\int_\Om \frac{( f(x,u_0)u_0-\frac{2n}{m} F(x,u_0)}{|x|^\alpha}.\]
So, $\mathcal J(u_0)\geq 0$. Using this together with Lemma \ref{le1} and the equality,
$\frac{n}{m}(c_*-J(u_0))=\widehat M(\rho_{0}^\frac{n}{m})-\widehat M(\|u_0\|^\frac{n}{m})$
we get
$\widehat M(\rho_{0}^\frac{n}{m}) \le \frac{n}{m} c_*+\widehat M(\|u_0\|^\frac{n}{m})< \widehat M(\beta_{\alpha}^\frac{n-m}{m})+\widehat M(\|u_0\|^\frac{n}{m})$
and therefore by $(m1)$
\begin{equation}\label{7a6}
\rho_{0}^\frac{n}{m}< \widehat M^{-1}\left(\widehat M(\beta_{\alpha}^\frac{n-m}{m})+\widehat M(\|u_0\|^\frac{n}{m})\right)\le \beta_{\alpha}^\frac{n-m}{m}+\|u_0\|^\frac{n}{m}.
\end{equation}
Since $\rho_{0}^\frac{n}{m}(1-\|v_0\|^\frac{n}{m})=\rho_{0}^\frac{n}{m}-\|u_0\|^\frac{n}{m}$, from \eqref{7a6} it follows that
\[ \rho_{0}^\frac{n}{m}< \frac{\beta_{\alpha}^\frac{n-m}{m}}{1-\|v_0\|^\frac{n}{m}}.\]
Thus, there exists $\nu>0$ such that $ \|u_k\|^{\frac{n}{n-m}} < \nu < \frac{\beta_{\alpha}}{(1-\|v_0\|^\frac{n}{m})^{\frac{m}{n-m}}}$ for $k$ large. We can choose $q>1$ close to $1$ such that $q \|u_k\|^{\frac{n}{n-m}} \le  \nu < \frac{\beta_{\alpha}}{(1-\|v_0\|^\frac{n}{m})^{\frac{m}{n-m}}}$ and using \eqref{7a4}, we conclude that for $k$ large
\[ \int_\Om \frac{e^{q |u_k|^{\frac{n}{n-m}}}}{|x|^\alpha} dx \le \int_\Om \frac{e^{\nu |v_k|^{\frac{n}{n-m}}}}{|x|^\alpha}dx \le C.\]
Now by standard calculations, using H\"{o}lder's inequality and weak convergence of  $\{u_k\}$ to $u_0$, we get $\int_\Om \frac{f(x,u_k) (u_k-u_0)}{|x|^\alpha}dx \rightarrow 0$ as $k\rightarrow \infty$. Since $\langle \mathcal J^\prime (u_k), u_k-u_0\rangle \rightarrow 0$, it follows that
\begin{equation} \label{na7new}
M(\|u_k\|^\frac{n}{m}) \int_{\Om} |\na^m u_k|^{\frac{n}{m}-2} \na^m u_k (\na^m u_k-\na^m u_0) \rightarrow 0.
\end{equation}
\noi On the other hand, using $u_k\rightharpoonup u_0$ weakly and boundedness of $\{u_k\}$, we get
\begin{equation}\label{na8new}
M(\|u_k\|^\frac{n}{m}) \int_{\Om} |\na^m u_0|^{\frac{n}{m}-2} \na^m u_0 (\na^m u_k - \na^m u_0)\ra 0\; \mbox{as}\; k\ra\infty.
\end{equation}
Subtracting \eqref{na8new} from \eqref{na7new}, we get
\[M(\|u_k\|^\frac{n}{m}) \int_\Om (|\na^m u_k|^{\frac{n}{m}-2}\na^m u_k - |\na^m u_0|^{\frac{n}{m}-2} \na^m u_0)\cdot (\na^m u_k -\na^m u_0) \rightarrow 0 \]
as $k\rightarrow \infty$. Now using this and inequality by using
\begin{align*}
|a-b|^l\leq 2^{l-2}(|a|^{l-2}a-|b|^{l-2}b)(a-b)\;\mbox{for all}\; a,b\in \mb R^n\;\mbox{and}\; l\geq 2,
\end{align*}
 with $a=\na^m u_k$ and $b=\na^m u_0$, we get
\[M(\|u_k\|^\frac{n}{m})\int_{\Om}|\na^m u_k- \na^m u_0|^\frac{n}{m} \ra 0\;\mbox{as}\; k\ra\infty.\]
Since $M(t)\ge M_0$, we obtain $u_k\ra u$ strongly in $W^{m,\frac{n}{m}}_{0}(\Om)$ and hence $\|u_k\| \rightarrow \|u_0\|$.\\
Therefore, $J(u_0)=c_*$ and hence the claim.\\
\textbf{Claim 3:} $u_0\not\equiv 0$. Suppose not. Then from claim 2, \\
$\ds\lim_{k\ra \infty}\widehat M(\|u_k\|^{\frac{n}{m}})= \frac{n}{m}\left(c+ \int_{\Om} \frac{F(x,u)}{|x|^\alpha}dx\right)= \frac{nc}{m}$ and $c\ne 0$ which is a contradiction. Hence $u\not\equiv 0$. Now By Assertion 3 and \eqref{7a3} we can see that $\widehat M(\rho_{0}^\frac{n}{m})=\widehat M(\|u_0\|^\frac{n}{m})$ which shows that $\rho_{0}^\frac{n}{m}=\|u_0\|^\frac{n}{m}$. Hence by \eqref{n7a12} we have
\[  M(\|u_0\|^\frac{n}{m}) \int_\Om |\na^m u_0|^{\frac{n}{m}-2}\na^m u_0 \na^m v~ dx =\int_\Om \frac{f(x,u_0)}{|x|^\alpha} v ~dx, \; \text{for all} \; v\in W^{m,\frac{n}{m}}_{0}(\Om).\] Thus, $u_0$ is a solution of $(\mc P)$. \QED
\section{Multiplicity results for convex-concave problem}
\setcounter{equation}{0} Let $\Om \subset\mb R^n$, $n\geq 2m\geq 2$,
be a bounded domain with smooth boundary. We consider the following
quasilinear polyharmonic equation in $\Om$:
$$ (\mathcal P_{\la})\quad \left\{
\begin{array}{lr}
 \quad  {-M\left(\displaystyle \int_\Omega |\nabla^mu|^\frac{n}{m}dx\right)}\De^{m}_{\frac{n}{m}} u = \la h(x)|u|^{q-1}u+ u|u|^{p} ~ \frac{e^{|u|^{\ba}}}{|x|^\alpha} \; \text{in}\;
\Om{,}\\
 \quad \quad u=\nabla u=\cdot\cdot\cdot= {\nabla}^{m-1} u =0\quad\quad \text{on} \quad \partial \Om{,}
 \end{array}
\right.
$$
 where $1< q<\frac{n}{m}<\frac{2n}{m}< p+2$, $0<\alpha<n$, $\ba\in (1,\frac{n}{n-m}]$ and $\la>0$.
We assume the following:
 \begin{enumerate}
\item[$(m4)$] $M(t)=a+bt$ with $M(t)\geq M_0$ for all $t\geq0{.}$
\item[$(H)$]$h^{\pm}\not\equiv 0$ and $\ds h\in L^{\frac{k}{k-1}}(\Om)\cap L^{\ga}(\Om, |x|^\frac{\alpha}{k-1}~dx)$, where $\gamma=\frac{n}{n-mq-m}$ and $k=\frac{p+2+\beta}{q}$.

\end{enumerate}

\begin{Definition}
We say that $u\in W^{m,\frac{n}{m}}_{0}(\Om)$ is a weak solution of
$(\mathcal P_{\la})$, if for all $\phi \in W^{m,\frac{n}{m}}_{0}(\Om)$, we have
\begin{align*}
M\left(\displaystyle \int_\Omega |\nabla^m u|^\frac{n}{m}dx\right)\int_{\Om}|{\na}^{m} u|^{\frac{n}{m}-2}{\na}^{m} u {\na}^{m} \phi ~dx = \la \int_{\Om}
h(x)|u|^{q-2}u\phi ~dx+\int_{\Om}\frac{f(u)}{|x|^\alpha}\phi ~dx.
\end{align*}
\end{Definition}

\noi The Euler functional associated with the problem $(\mc P_{\la})$ is $J_{\la} :W^{m,\frac{n}{m}}_{0}(\Om) \lra \mb{R}$ defined as
\begin{equation*}
J_{\la}(u)= \frac{m}{n}\widehat M\left(\int_{\Om}|\nabla^{m}u|^{\frac{n}{m}} ~dx\right) - \frac{\la}{q}\int_{\Om} h(x)|u|^{q} ~dx- \int_{\Om}
\frac{F(u)}{|x|^\alpha} ~dx,
\end{equation*}
where $f(u)=u|u|^{p}e^{|u|^{\ba}}$ and $\ds F(u)=\int_{0}^{u} f(s)
ds$.


\noi {In this part {of the paper}, we show the following multiplicity result:
\begin{Theorem}\label{tp2}
For $\ba\in (1,\frac{n}{n-m})$, $h$ satisfying $(H)$ and $\la\in(0,\la_0)$, $(\mc P_{\la})$ admits atleast two solutions.
\end{Theorem}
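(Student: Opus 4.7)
\textbf{Proof plan for Theorem~\ref{tp2}.} The plan is to use the Nehari manifold and fibering map technique adapted to the Kirchhoff framework. I would first introduce
\[
 \mathcal N_\la=\{u\in W^{m,\frac{n}{m}}_{0}(\Om)\setminus\{0\}:\langle J_\la'(u),u\rangle=0\},
\]
and the fibering map $\phi_u(t)=J_\la(tu)$ for $t>0$. Using $M(t)=a+bt$, one computes
\[
 \phi_u'(t)=at^{\frac{n}{m}-1}\|u\|^{\frac{n}{m}}+bt^{\frac{2n}{m}-1}\|u\|^{\frac{2n}{m}}-\la t^{q-1}\!\int_\Om h|u|^q\,dx-\!\int_\Om\frac{f(tu)u}{|x|^\al}\,dx,
\]
which, together with the Moser--Adams inequality, the growth assumption $\beta<\frac{n}{n-m}$ and hypothesis $(H)$ (which gives $h|u|^q\in L^1(\Om)$ via the weighted H\"older inequality with exponent $\gamma$), makes $\phi_u\in C^2$ well-defined. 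The standard splitting $\mathcal N_\la=\mathcal N_\la^+\cup\mathcal N_\la^0\cup\mathcal N_\la^-$ according to the sign of $\phi_u''(1)$ would then be carried out.

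The first main step is to produce $\la_0>0$ such that $\mathcal N_\la^0=\emptyset$ for all $\la\in(0,\la_0)$. The argument is by contradiction: assuming $u\in\mathcal N_\la^0$, one uses the two identities $\phi_u'(1)=\phi_u''(1)=0$ together with the ordering $q<\frac{n}{m}<\frac{2n}{m}<p+2$ to isolate $\|u\|$ from below by a constant independent of $\la$, and then derives an upper bound on $\la \int_\Om h|u|^q$ through Moser--Adams that contradicts the lower bound unless $\la$ exceeds a computable $\la_0$. As a by-product one obtains that $\mathcal N_\la^\pm$ are nonempty (functions with $\int h|u|^q>0$ generate fiber maps with the classical two-critical-point shape for small $\la$) and that $J_\la$ is coercive and bounded below on $\mathcal N_\la$.

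Next I would set $\theta_\la^\pm:=\inf_{\mathcal N_\la^\pm}J_\la$ and show $\theta_\la^+<0\le\theta_\la^-$. For the first minimization, picking $u\in\mathcal N_\la^+$ close to the zero of $\phi_u$ coming from the concave $q$-term gives $J_\la(u)<0$; Ekeland's variational principle on $\mathcal N_\la^+$ yields a minimizing sequence $\{u_k\}$ that is Palais--Smale for $J_\la$ on $W^{m,\frac{n}{m}}_0(\Om)$ (since $\mathcal N_\la^0=\emptyset$, there is no Lagrange multiplier obstruction). By coercivity $\{u_k\}$ is bounded, and a weak limit $u^{(1)}$ exists. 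Because $\beta<\frac{n}{n-m}$, the embedding $u\mapsto e^{|u|^\beta}/|x|^\al$ is compact into $L^1(\Om)$ for bounded sequences, and the argument of Lemma~\ref{IC2} gives $|\nabla^m u_k|^{\frac{n}{m}-2}\nabla^m u_k\rightharpoonup|\nabla^m u^{(1)}|^{\frac{n}{m}-2}\nabla^m u^{(1)}$; combined with the inequality $|a-b|^{n/m}\le 2^{n/m-2}(|a|^{n/m-2}a-|b|^{n/m-2}b)\cdot(a-b)$ and $M(t)\ge M_0$, this upgrades to strong convergence $u_k\to u^{(1)}$ in $W^{m,\frac{n}{m}}_0(\Om)$, so $u^{(1)}\in\mathcal N_\la^+$ realises $\theta_\la^+$ and is a weak solution.

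For the second solution I would repeat the procedure on $\mathcal N_\la^-$, obtaining a minimizing Palais--Smale sequence $\{v_k\}$ for $J_\la$ at the level $\theta_\la^-$. The main obstacle is passing to the limit here, because on $\mathcal N_\la^-$ the norms $\|v_k\|$ are bounded away from $0$ by a fixed constant (from the defining inequality of $\mathcal N_\la^-$), and exponential concentration is the usual threat. However the condition $\beta<\frac{n}{n-m}$ is strictly subcritical, so by the singular Adams--Moser inequality~\eqref{eqc002} applied with $\nu$ slightly larger than $\beta\|v_k\|^{\frac{n}{n-m}}/\beta_{n,m}$, the functions $e^{|v_k|^\beta}/|x|^\al$ are equi-integrable; hence $\int_\Om f(v_k)(v_k-v^{(2)})/|x|^\al\to 0$ and the same monotonicity argument yields strong convergence $v_k\to v^{(2)}$. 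Since $v^{(2)}\in\mathcal N_\la^-$ and $\mathcal N_\la^+\cap\mathcal N_\la^-=\emptyset$, the two solutions are distinct, completing the proof. The genuinely delicate point, as always in this setup, is the interplay between the Kirchhoff factor $M(\|u\|^{n/m})$ and the exponential nonlinearity when passing to the limit, which is handled precisely because $\beta$ lies strictly below the critical Adams exponent.
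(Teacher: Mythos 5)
Your proposal is correct and follows essentially the same route as the paper: Nehari manifold split into $\mathcal N_\lambda^{\pm}$ and $\mathcal N_\lambda^0$ via the fibering map, a smallness condition on $\lambda$ forcing $\mathcal N_\lambda^0$ to be trivial, coercivity of $J_\lambda$ on $\mathcal N_\lambda$, Ekeland's variational principle to produce Palais--Smale minimizing sequences on each half, and compactness of the exponential term for the strictly subcritical exponent $\beta<\frac{n}{n-m}$ to upgrade weak to strong convergence (the paper does this in Lemmas~\ref{le3}, \ref{le5}, \ref{th1}, \ref{le31}--\ref{le33}, Proposition~\ref{pro1}, and Lemmas~\ref{zle34}, \ref{zle03}). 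The only cosmetic difference is that you propose running Ekeland directly on $\mathcal N_\lambda^+$, whereas the paper runs it on all of $\mathcal N_\lambda$ and then shows the minimizer lands in $\mathcal N_\lambda^+$ -- the paper's variant avoids having to discuss completeness of $\mathcal N_\lambda^+$, but the conclusion is the same.
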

\noi Moreover, we have the following existence result for critical case:
\begin{Theorem}\label{tp1}
 Let $\ba=\frac{n}{n-m}$ and $h$ satisfies $(H)$. Then there exists $\la^0$ such that for $\la\in(0,\la^0)$, $(\mc P_{\la})$ admits a solution $u_{\la}$,
which is a local minimizer of $J_{\la}$ on $W_{0}^{m,\frac{n}{m}}(\Om)$.
\end{Theorem}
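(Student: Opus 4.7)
The plan is to use the Nehari manifold technique combined with fibering map analysis. For each $u\in W^{m,n/m}_0(\Omega)\setminus\{0\}$ introduce the fibering map $\phi_u(t)=J_\lambda(tu)$, $t>0$, and the Nehari manifold
$\mathcal N_\la=\{u\neq 0:\phi_u'(1)=0\}$, which splits into $\mathcal N_\la^{\pm,0}$ according to the sign of $\phi_u''(1)$. Using $M(t)=a+bt$ together with $\phi_u'(1)=0$, a direct computation gives
\begin{equation*}
\phi_u''(1)=a\bigl(\tfrac{n}{m}-q\bigr)\|u\|^{\frac{n}{m}}+b\bigl(\tfrac{2n}{m}-q\bigr)\|u\|^{\frac{2n}{m}}-\int_\Om\frac{[(p+2-q)|u|^{p+2}+\ba|u|^{p+\ba+2}]e^{|u|^\ba}}{|x|^\al}dx.
\end{equation*}
Since $1<q<n/m<2n/m$, the polynomial part is dominant for small $\|u\|$, so $\phi_u$ has a strict local minimum close to the origin (giving points of $\mathcal N_\la^+$) and, by the coerciveness in $t$ of the positive part, a subsequent local maximum (giving points of $\mathcal N_\la^-$).

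First I would show that there exists $\la^0>0$ such that $\mathcal N_\la^0=\emptyset$ for $\la\in(0,\la^0)$. The argument is standard: if $u\in\mathcal N_\la^0$, then $\phi_u'(1)=\phi_u''(1)=0$ together with the singular Adams-Moser inequality (Theorem \ref{admo}) applied to $u/\|u\|$ yields a uniform lower bound $\|u\|\ge c_0>0$ and in turn a lower bound $\la\ge\la^0$. As a consequence, $\mathcal N_\la=\mathcal N_\la^+\sqcup\mathcal N_\la^-$ and each piece is a $C^1$-submanifold of $W^{m,n/m}_0(\Om)$. A short estimate with the continuous embeddings into weighted Lebesgue spaces shows $J_\la$ is coercive and bounded below on $\mathcal N_\la$, and choosing an appropriate test function with $\int_\Om h|u|^q>0$ gives $\theta_\la:=\inf_{\mathcal N_\la^+}J_\la<0$ (as $J_\la(tu)<0$ for small $t$ along such directions).

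Next I would apply Ekeland's variational principle on $\mathcal N_\la^+$ to produce a minimizing sequence $\{u_k\}\subset\mathcal N_\la^+$ with $J_\la(u_k)\to\theta_\la<0$ and $J_\la'(u_k)\to 0$ in $(W^{m,n/m}_0(\Om))^{\ast}$; the fact that $\mathcal N_\la^0=\emptyset$ is precisely what kills the Lagrange multiplier. Boundedness of $\{u_k\}$ is proved as in Lemma \ref{bps}, yielding a weak limit $u_\la$ and, up to a subsequence, $u_k\to u_\la$ in every $L^s(\Om)$. Since $\theta_\la<0$, the limit $u_\la$ is nontrivial.

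The hard part, and the reason the range of $\la$ must shrink in the critical case $\ba=\frac{n}{n-m}$, is the passage to the limit in the exponential term. Here I would proceed as in the proof of Theorem \ref{main}: apply the higher-integrability Lemma \ref{pllion} to the renormalized sequence $v_k=u_k/\|u_k\|$, for which $v_0=u_\la/\rho_0$ with $\rho_0=\lim\|u_k\|$ satisfies $\|v_0\|<1$. Choosing $\la^0$ small enough forces $\rho_0^{n/(n-m)}<\ba_\al/(1-\|v_0\|^{n/m})^{m/(n-m)}$, which gives an $L^q$-bound, for some $q>1$, on $e^{|u_k|^{n/(n-m)}}/|x|^\al$ and hence, via Lemma \ref{lc1} and H\"older, the convergence $\int_\Om\frac{f(u_k)(u_k-u_\la)}{|x|^\al}dx\to 0$. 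Combined with $\la\int h|u_k|^{q-2}u_k(u_k-u_\la)\to 0$ and $\langle J_\la'(u_k),u_k-u_\la\rangle\to 0$, the same convexity trick as in Lemma \ref{IC2} upgrades weak convergence to strong convergence in $W^{m,n/m}_0(\Om)$. Thus $u_\la\in\mathcal N_\la^+$ attains $\theta_\la$ and is a critical point of $J_\la$. Finally, because $\phi_{u_\la}''(1)>0$, the standard deformation argument of Brown-Zhang promotes $u_\la$ from an infimum on $\mathcal N_\la^+$ to a local minimizer of $J_\la$ on the whole space $W^{m,n/m}_0(\Om)$, which is the claim of Theorem \ref{tp1}.
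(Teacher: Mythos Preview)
Your overall framework---Nehari manifold, fibering maps, Ekeland's principle, showing $\mc N_\la^0=\{0\}$ for small $\la$, and $\theta_\la<0$---matches the paper's Section~3.1 almost verbatim. The divergence is in the compactness step, and there your plan has a real gap.

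The paper does \emph{not} use Lemma~\ref{pllion} here. Instead it proves a dedicated concentration--compactness result, Lemma~\ref{cpt}: any Palais--Smale sequence for $J_\la$ at a level
\[
c<\frac{m}{2n}M_0\beta_\al^{\frac{n-m}{m}}-C\la^{\frac{p+2+\ba}{p+2-q+\ba}}
\]
admits a strongly convergent subsequence. The proof localizes the defect measures $\mu_1$ of $|\na^m u_k|^{n/m}$ and $\mu_2$ of $f(u_k)u_k/|x|^\al$, shows that any atom of $\mu_2$ has mass at least $M_0\beta_\al^{(n-m)/m}$, and checks via $J_\la(u_k)-\frac{m}{2n}\langle J_\la'(u_k),u_k\rangle$ together with \eqref{reqla} that a single atom already pushes $c$ above the displayed threshold. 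One then defines $\la_{00}$ so that $\theta_\la$ lies below this threshold for $\la<\la_{00}$ (automatic for small $\la$ since $\theta_\la<0$), and takes $\la^0=\min\{\la_0,\la_{00}\}$.

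Your route through Lemma~\ref{pllion} is not self-contained as written. First, that lemma has as a \emph{hypothesis} that $\na^m v_k\to\na^m v_0$ pointwise a.e.; for the sequence produced by Ekeland this is not free, and in the paper it is obtained only through the concentration--compactness machinery of Lemma~\ref{IC2}. You would therefore need an analogue of Lemma~\ref{IC2} for $J_\la$ before invoking Lemma~\ref{pllion}, and you do not mention this. Second, the sentence ``choosing $\la^0$ small enough forces $\rho_0^{n/(n-m)}<\beta_\al/(1-\|v_0\|^{n/m})^{m/(n-m)}$'' is asserted without argument. It can in fact be extracted from the coercivity estimate in Lemma~\ref{th1} (which, combined with $J_\la(u_k)\le\theta_\la<0$, forces $\|u_k\|^{n/m-q}\le C\la$ and hence $\rho_0\to 0$ as $\la\to 0$), but once you know $\rho_0^{n/(n-m)}<\beta_\al$, Lemma~\ref{pllion} is overkill---Theorem~\ref{admo} alone already bounds $\int_\Om e^{q|u_k|^{n/(n-m)}}/|x|^\al\,dx$. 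Either way, the compactness paragraph needs to be rewritten: either supply the missing a.e.\ gradient convergence and the quantitative bound on $\rho_0$, or replace the whole step by the paper's Lemma~\ref{cpt}.
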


}
\subsection{The Nehari manifold and Fibering map analysis for $(P_{\la})$}
\setcounter{equation}{0} \noi The energy functional $J_{\la}$ is
unbounded on the space $W^{m,\frac{n}{m}}_{0}(\Om)$, but is
bounded below on an appropriate subset $\mc N_\la$ (see below) of
$W^{m,\frac{n}{m}}_{0}(\Om)$ and a minimizer on nonempty decompositions of this set
give rise to the solutions of $(\mathcal P_{\la})$. We introduce the Nehari
manifold $\mc N_\la$ associated to the problem $(\mathcal P_\la)$ as
\begin{equation*}
\mc N_{\la}: = \left\{u\in W^{m,\frac{n}{m}}_{0}(\Om): \ld
J_{\la}^{\prime}(u),u\rd=0 \right\},
\end{equation*}
where $\ld\;,\; \rd$ denotes the duality between
$W^{m,\frac{n}{m}}_{0}(\Om)$ and its dual space. We note that
$\mathcal N_{\la}$ contains every solution of $(\mc P_{\la})$. The
Nehari manifold is closely related to the behavior of the fibering
map $\phi_u: \mb R^+\ra \mb R$  defined as
$\phi_{u}(t)=J_{\la}(tu)$, which was introduced by Drabek and
Pohozaev in \cite{DP}. For $u\in W^{m,\frac{n}{m}}_{0}(\Om)$, we
have
\begin{align*}
\phi_{u}(t) &= \frac{m}{n}\widehat M\left(t^\frac{n}{m} \|u\|^{\frac{n}{m}}\right) - \frac{\la
 t^{q}}{q} \int_{\Om} h(x)|u|^{q} ~dx -\int_{\Om} \frac{F(tu)}{|x|^\alpha} ~dx ,\\
\phi_{u}^{\prime}(t) &= M\left(t^\frac{n}{m} \|u\|^{\frac{n}{m}}\right)t^{\frac{n}{m}-1}\|u\|^{\frac{n}{m}} - {\la t^{q-1}} \int_{\Om} h(x)|u|^{q} ~dx - \int_{\Om}\frac{{f(tu)u}}{|x|^\alpha} ~dx,\\
\phi_{u}^{\prime\prime}(t) &= \left(\frac{n}{m}-1\right) t^{\frac{n}{m}-2} M\left(t^\frac{n}{m}\|u\|^{\frac{n}{m}}\right)\|u\|^\frac{n}{m}+\frac{n}{m}M^\prime \left(t^\frac{n}{m}\|u\|^{\frac{n}{m}}\right)t^{\frac{2n}{m}-1}\|u\|^{\frac{2n}{m}}\\
 &- \la (q-1)t^{q-2} \int_{\Om} h(x)|u|^{q} ~dx - \int_{\Om}\frac{{f^{\prime}(tu)u^2}}{|x|^\alpha}~ dx.
\end{align*}
So, $u\in \mathcal N_{\la}$ if and only if
\begin{equation*}
M\left( \|u\|^{\frac{n}{m}}\right)\|u\|^{\frac{n}{m}} - {\la} \int_{\Om} h(x)|u|^{q} ~dx - \int_{\Om}\frac{{f(u)u}}{|x|^\alpha} ~dx=0.
\end{equation*}
It is easy to see that $tu\in \mathcal N_{\la}$ if and only if $\phi_u^\prime(1)=0$, that is
$\phi_{u}^{\prime}(t)=0$. We divide $\mathcal N_{\la}$ into three parts corresponding to local minima,
local maxima and points of inflection of $\phi_u(t)$ as,
\[
\mathcal N_{\la}^{\pm}= \left\{u\in \mc N_{\la}:
\phi_{u}^{\prime\prime}(1)
\gtrless0\right\} \quad
\mathcal N_{\la}^{0}= \left\{u\in \mc N_{\la}:
\phi_{u}^{\prime\prime}(1) = 0\right\}.
\]

To prove our result, first we prove the following important Lemma:
\begin{Lemma}\label{le3}
Let $\ds \La := \left\{u\in W^{m,\frac{n}{m}}_{0}(\Om),\;\;
 {\|u\|^{\frac{3n}{2m}} \leq \frac{m}{2\sqrt {ab(n-mq)(2n-mq)}}}\int_{\Om}\frac{{f^{\prime}(u){u}^2}}{|x|^\alpha}dx\right\}$. \\
Then there exists $\la_0>0$ such that
for every $\la\in(0,\la_0)$,
\begin{equation}\label{eq8}
I_\la:=\inf_{u\in \La\setminus\{0\}}\left\{\int_{\Om}
\frac{\left(mp+2m-2n +m\ba{|u|^{\ba}}\right)|u|^{p+2} e^{|u|^{\ba}}}{|x|^\alpha}- (2n-mq)
{\la} \int_{\Om} h|u|^{q}dx \right\}>0.
\end{equation}
\end{Lemma}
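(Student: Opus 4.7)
The plan is to split the expression inside $I_\lambda$ into a dominant positive term of order $\|u\|^{3n/(2m)}$ plus a lower-order perturbation of order $\lambda\|u\|^q$, and to rule out degeneracy near zero by establishing a uniform positive lower bound on $\|u\|$ over $\Lambda\setminus\{0\}$. Since $f'(u)u^2 = ((p+1)+\beta|u|^\beta)|u|^{p+2}e^{|u|^\beta}$, for the first (pointwise comparison) step consider the ratio $\rho(t) := \frac{mp+2m-2n+m\beta t}{(p+1)+\beta t}$, $t\ge 0$. A short computation gives $\rho'(t) = \frac{\beta(2n-m)}{((p+1)+\beta t)^2} > 0$, and the hypothesis $p+2 > 2n/m$ yields $\rho(0) = \frac{mp+2m-2n}{p+1} > 0$. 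Integrating the resulting pointwise inequality and applying the defining inequality of $\Lambda$,
\begin{equation*}
\int_\Om \frac{(mp+2m-2n+m\beta|u|^\beta)|u|^{p+2}e^{|u|^\beta}}{|x|^\alpha}\,dx \;\ge\; c_1\|u\|^{3n/(2m)},
\end{equation*}
where $c_1 = \frac{2(mp+2m-2n)\sqrt{ab(n-mq)(2n-mq)}}{m(p+1)} > 0$.

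Next, assumption $(H)$ and H\"older's inequality with exponents $\frac{k}{k-1}$ and $k=(p+2+\beta)/q$, combined with the continuous embedding $W^{m,n/m}_0(\Om)\hookrightarrow L^s(\Om)$ for every $s\ge 1$, yield $(2n-mq)\lambda\int_\Om h|u|^q\,dx \le C_2\lambda\|u\|^q$. To obtain the lower bound on $\|u\|$ on $\Lambda$, apply H\"older to $\int_\Om f'(u)u^2/|x|^\alpha$ with the factor $e^{r|u|^\beta}/|x|^{r\alpha}$ in the second slot; using $\beta\le\frac{n}{n-m}$ and choosing $r>1$ close to $1$ so that $r\alpha<n$ and $r\|u\|^{\beta}$ stays below the critical Adams--Moser threshold $\beta_{r\alpha}$ (uniformly for $\|u\|\le R_0$ with $R_0$ small), Theorem~\ref{admo} gives $\int_\Om f'(u)u^2/|x|^\alpha\,dx\le C\|u\|^{p+2}$. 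The defining inequality of $\Lambda$, with $K=\frac{m}{2\sqrt{ab(n-mq)(2n-mq)}}$, then forces $\|u\|^{p+2-3n/(2m)}\ge \frac{1}{CK}$ whenever $\|u\|\le R_0$. Since $p+2 > 2n/m > 3n/(2m)$, this produces a uniform lower bound $\|u\|\ge c_3>0$ on $\Lambda\setminus\{0\}$.

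Combining the estimates, for every $u\in\Lambda\setminus\{0\}$, using $\|u\|\ge c_3$ together with $q < n/m < 3n/(2m)$,
\begin{equation*}
c_1\|u\|^{3n/(2m)} - C_2\lambda\|u\|^q \;\ge\; \|u\|^{3n/(2m)}\bigl(c_1 - C_2\lambda\,c_3^{q-3n/(2m)}\bigr).
\end{equation*}
Choosing $\lambda_0 := \frac{c_1 c_3^{3n/(2m)-q}}{2C_2}$ yields $I_\lambda\ge \tfrac{c_1}{2}c_3^{3n/(2m)} > 0$ for all $\lambda\in(0,\lambda_0)$. The main technical hurdle is the uniform bound $\int_\Om f'(u)u^2/|x|^\alpha\le C\|u\|^{p+2}$ in the third step: one must verify that the H\"older exponent $r>1$ can be chosen so that the singular Adams--Moser inequality applies with a constant independent of $u\in\Lambda$ with $\|u\|\le R_0$. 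Without this positive lower bound on $\|u\|$, the subcritical perturbation $C_2\lambda\|u\|^q$ could dominate $c_1\|u\|^{3n/(2m)}$ as $\|u\|\to 0^+$ and the infimum $I_\lambda$ could fail to be positive.
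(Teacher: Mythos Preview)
Your proof is correct and shares the decisive ingredient with the paper: both arguments establish the uniform lower bound $\inf_{u\in\Lambda\setminus\{0\}}\|u\|>0$ by exactly the same mechanism (Adams--Moser control of $\int f'(u)u^2/|x|^\alpha$ for small $\|u\|$, then the exponent gap $p+2>3n/(2m)$ against the defining inequality of $\Lambda$). Your explicit monotonicity computation for $\rho(t)=\frac{mp+2m-2n+m\beta t}{(p+1)+\beta t}$ is precisely what the paper hides behind the phrase ``it is easy to check'' in its Step~2.

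Where you diverge is in the closing estimate. The paper, after fixing $C_1=\inf_{\Lambda\setminus\{0\}}\int\frac{(mp+2m-2n+m\beta|u|^\beta)|u|^{p+2}e^{|u|^\beta}}{|x|^\alpha}\,dx>0$, applies a weighted H\"older inequality with exponent $k=(p+2+\beta)/q$ to compare $\int h|u|^q$ \emph{directly} with the main integral via $|u|^{p+2+\beta}\le(mp+2m-2n+m\beta|u|^\beta)|u|^{p+2}e^{|u|^\beta}$, obtaining $\int h|u|^q\le (l/C_1)^{(k-1)/k}\cdot(\text{main integral})$ and hence an explicit $\lambda_0=\frac{1}{2n-mq}(C_1/l)^{(k-1)/k}$. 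You instead route both terms through the norm: the main integral is bounded below by $c_1\|u\|^{3n/(2m)}$ via the $\Lambda$-constraint, the concave term above by $C_2\lambda\|u\|^q$ via Sobolev, and the exponent inequality $q<3n/(2m)$ together with $\|u\|\ge c_3$ finishes. Your route is a bit more elementary and avoids the weighted $L^{k/(k-1)}$ assumption on $h$; the paper's route yields a cleaner explicit formula for $\lambda_0$ in terms of $C_1$ and the weighted norm of $h$, and makes the later Remark (that $I_{\lambda'}>0\Rightarrow I_\lambda>0$ for $\lambda<\lambda'$) immediate.
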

\proof ~
 Step 1: We have \[\ds \inf_{u\in \La\setminus\{0\}} \|u\|
>0.\]

\noi Suppose not. Then there exists a sequence $\{u_k\}
\subset \La\setminus\{0\}$ such that $\|u_k\| \ra 0$
and we have
\begin{align}\label{eq9}
{ \|u_k\|^{\frac{3n}{2m}} \leq  \frac{m}{2\sqrt{ab(n-mq)(2n-mq)}}}\int_{\Om}\frac{{f^{\prime}(u_k){u^2_k}}}{|x|^\alpha}dx
\;\;\; \mbox{for all} \;\;k .
\end{align}
From $f(u)=u|u|^pe^{|u|^{\ba}}$, by H\"{o}lder's and Sobolev
inequalities, we have
\begin{align*}
\int_{\Om} \frac{f^{\prime}(u_k)u_{k}^2}{|x|^\alpha} dx &=
\int_{\Om} \frac{\left(p+1+\ba|u_k|^{\ba}\right)|{u_k}|^{p+2}e^{|u_k|^{\ba}}}{|x|^\alpha} dx\\
&\leq C\int_{\Om}\frac{|{u_k}|^{p+2} e^{(1+\de)|u_k|^{\ba}}}{|x|^\alpha} dx\\
 &\leq C\left(\int_{\Om}\frac{|{u_k}|^{(p+2)t^{\prime}}}{|x|^\alpha} dx
\right)^{\frac{1}{t^{\prime}}}\left( \int_{\Om} \frac{e^{t(1+\de)|u_k|^{\ba}}}{|x|^\alpha}  dx\right)^{\frac{1}{t}}\\
&\leq C' \|u_k\|^{p+2} \left(\sup_{\|w_k\|\leq 1}
\int_{\Om}\frac{e^{t(1+\de)\|u_k\|^{\ba}|w_k|^{\ba}}}{|x|^\alpha}
dx\right)^{\frac{1}{t}}{.}
\end{align*}
Since $\|u_k\|\ra 0$ as $k\ra\infty$, we can choose $\gamma=t(1+\de)\|u_k\|^{\ba}$ such that $\gamma \leq \ba_{\alpha}$.
Hence for every $\ba\leq\frac{n}{n-m}$, using Theorem \ref{admo} and \eqref{eq9}, we get $1\leq
K'\|u_k\|^{p+2-\frac{n}{m}}\ra 0$ as $k\ra \infty$, since $p+2 >\frac{n}{m}$.
This is a contradiction.\\
\noi Step 2: Let $\ds C_1=\inf_{u\in \La\setminus\{0\}}\left\{\int_{\Om}
\frac{\left(mp+2m-2n +m\ba{|u|^{\ba}}\right)|u|^{p+2} e^{|u|^{\ba}}}{|x|^\alpha}dx\right\}$. Then $C_{1}>0$.

\noi From Step 1 and the definition of $\La$, we obtain
\begin{align*}
0&< \inf_{u\in \La\setminus\{0\}}\int_{\Om}
\frac{f^{\prime}(u)u^2}{|x|^\alpha}dx =\inf_{u\in \La\setminus\{0\}}\int_{\Om}\frac{{\left(p+1+\ba{|u|^{\ba}}\right)|u|^{p+2}
e^{|u|^{\ba}}}}{|x|^\alpha} dx.
\end{align*}
Using this it is easy to check that
\begin{align*}
 \inf_{u\in \La\setminus\{0\}}\left\{\int_{\Om}
\frac{\left(mp+2m-2n +m\ba{|u|^{\ba}}\right)|u|^{p+2} e^{|u|^{\ba}}}{|x|^\alpha}dx\right\}>0.
\end{align*}

\noi Step 3: Let
$\la<\left(\frac{1}{2n-mq}\right)(\frac{C_1}{l})^{\frac{(k-1)}{k}}$, where
$l=\int_{\Om}|h(x)|^{\frac{k}{k-1}}|x|^\frac{\alpha}{k-1}dx$ and $k=\frac{p+2+\ba}{q}$. Then (\ref{eq8}) holds.

\noi Using H\"{o}lder's inequality and ${(H)}$ we have,
\begin{align*}
\int_{\Om} h(x)|u|^{q}
&\leq  \left(\int_{\Om}|h(x)|^{\frac{k}{k-1}}|x|^\frac{\alpha}{k-1}dx\right)^{\frac{k-1}{k}} \left(\int_{\Om} \frac{|u|^{qk}}{|x|^\alpha} dx \right)^{\frac{1}{k}}\\
&= l^{\frac{k-1}{k}} \left(\int_{\Om} \frac{|u|^{p+2+\ba}}{|x|^\alpha} dx \right)^{\frac{1}{k}}\\
&\leq l^{\frac{k-1}{k}}
\left(\int_{\Om}\frac{\left(mp+2m-2n +m\ba{|u|^{\ba}}\right)
|u|^{p+2}e^{|u|^{\ba}}}{|x|^\alpha} dx \right)^{\frac{1}{k}}\\
&\leq\left(\frac{l}{C_1}\right)^{\frac{k-1}{k}}
\int_{\Om}\frac{\left(mp+2m-2n +m\ba{|u|^{\ba}}\right) |u|^{p+2}e^{|u|^{\ba}}}{|x|^\alpha}
dx .
\end{align*}
The above inequality combined with step 2 proves the Lemma.\QED

\begin{Remark} From the proof of Lemma \ref{le3}, it is clear that if $I_{\la^\prime}>0$ for some $\la^\prime$, then $I_\la >0$ for all $\la <\la^\prime$. So we can assume that $\la_0$ is the maximum of all $\la$ such that $I_\la>0$ for all $\la\in (0,\la_0)$.
\end{Remark}
\noi Now we discuss the behavior of fibering map with respect to sign of $\int_\Omega h(x)|u|^qdx$ \\
\textbf{Case 1:} $\int_\Omega h(x)|u|^qdx>0$\\
 We define $\psi_{u}: \mb R^{+} \lra
\mb R$ by
\begin{equation*}
\psi_{u}(t)= t^{\frac{n}{m}-q} M\left(t^\frac{n}{m}\|u\|^{\frac{n}{m}}\right) \|u\|^{\frac{n}{m}}-  t^{1-q}\int_{\Om}\frac{f(tu) u}{|x|^\alpha} ~dx.
\end{equation*}
Clearly, for $t>0$, $t$ is a
solution of $\psi_{u}(t)={\la} \int_{\Om} h(x)|u|^{q}~dx$ if and only if $tu\in \mc N_{\la}$. Now
\begin{align*}
\psi_{u}^{\prime}&(t) = \left(\frac{n}{m}-q\right) t^{\frac{n}{m}-1-q} \|u\|^{\frac{n}{m}}M\left(t^\frac{n}{m}\|u\|^{\frac{n}{m}}\right)\\
&+\frac{n}{m}t^{\frac{2n}{m}-1}\|u\|^{\frac{2n}{m}}M^\prime\left(t^\frac{n}{m}\|u\|^{\frac{n}{m}}\right) - t^{1-q}
\int_{\Om}\frac{f^{\prime}(tu)u^2}{|x|^\alpha} dx-(1-q)t^{-q} \int_{\Om}\frac{f(tu)u}{|x|^\alpha} dx \\
&=\left(\frac{n}{m}-q\right) t^{\frac{n}{m}-1-q} a\|u\|^{\frac{n}{m}}+\left(\frac{2n}{m}-q\right) t^{\frac{2n}{m}-1-q} b\|u\|^{\frac{2n}{m}}\\
& - t^{1-q}\int_{\Om}\frac{f^{\prime}(tu)u^2}{|x|^\alpha} dx-(1-q)t^{-q} \int_{\Om}\frac{f(tu)u}{|x|^\alpha} dx{.}
\end{align*}
{
Observe that $\ds \lim_{t\ra 0^{+}}\psi_{u}^{\prime}(t)=\infty$, $\ds \lim_{t\ra \infty}\psi_{u}^{\prime}(t)=-\infty$  and $\ds \lim_{t\ra \infty}\psi_{u}(t)=-\infty$. So it is easy easy to show that there exists $t_*=t_{*}(u)>0$ such that $\psi_{u}^{\prime}(t_*)=0$ and $\max \psi_u(t)=\psi_u(t_*)>0$ . Also for any critical point of $\psi_u$  we have
\begin{equation}\label{3.17}
\left(\frac{n}{m}-q\right)t^\frac{n}{m}a\|u\|^\frac{n}{m}+\left(\frac{2n}{m}-q\right)t^\frac{2n}{m}b\|u\|^\frac{2n}{m}-(1-q)\int_\Omega\frac{f(tu)tu}{|x|^\alpha}dx
-\int_\Omega\frac{f'(tu)(tu)^2}{|x|^\alpha}dx=0
\end{equation}
Now using \eqref{3.17}, we get $\psi_u(t)>0$ for any critical point $t$  of $\psi_u$. Therefore for small values of $\lambda>0, \;t_*{u}\in \La\setminus\{0\}$ and }
\begin{align*}
\psi_{u}(t_*){\geq} \frac{1}{t_{*}^{q}(2n-mq)}\left[ m\int_{\Om}\frac{f^{\prime}(t_{*}u)
(t_*u)^2}{|x|^\alpha}  dx -(2n-m)\int_{\Om}\frac{f(t_*u) t_*u}{|x|^\alpha}  dx \right].
\end{align*}
Note that
\begin{equation}\label{interm}
mf^\prime(s)s^2-(2n-m)f(s)s=(mp+2m-2n+m\ba|s|^{\ba})|s|^{p+2}
e^{|s|^{\ba}}.
\end{equation}
Using Lemma \ref{le3}, we obtain
\begin{align*}
\psi_{u}(t_*)&- {\la} \int_{\Om} h(x)|u|^{q}dx\\& {\geq}\frac{1}{ t_{*}^{q} (2n-mq)}\left(
\int_{\Om}\frac{( mp+2m-2n+m\beta u^\beta )u^{p+2}e^{u^\beta}}{|x|^\alpha}dx-(2n-m)\lambda\int_\Omega h(x)|u|^qdx  \right)\\&>0.
\end{align*}

\noi Then there exist  unique
$t_{1}=t_{1}(u)<t_*$, $t_{2}=t_{2}(u)>t_*$ such that
$\psi_{u}(t_{1})= {\la} \int_{\Om} h(x)|u|^{q}dx = \psi_{u}(t_{2})$ and $t_{1}{u}$,
$t_{2}{u}\in \mc N_{\la}$. Also $\psi_{u}^{\prime}(t_{1})>0$,
$\psi_{u}^\prime(t_{2})<0$ implies $t_{1}u\in \mc N_{\la}^{+}$ and $t_{2}u\in
\mc N_{\la}^{-}$. As $\phi_{u}^{\prime}(t)=t^{q-1}(\psi_{u}(t)- \la
\int_{\Om} h(x)|u|^{q}dx)$, we have $\phi_{u}^{\prime}(t)<0$ for all $t\in [0,t_1)$ and
$\phi_{u}^\prime(t)>0$ for all $t\in (t_1,t_2)$. Thus
$\ds\phi_{u}(t_1)=\min_{0\leq t\leq t_2} \phi_{u}(t)$. Also
$\phi_{u}^{\prime}(t)>0$ for all $t\in [t_*,t_2)$,
$\phi_{u}^\prime(t_2)=0$ and $\phi_{u}^{\prime}(t)<0$ for all $t\in
(t_2,\infty)$ implies that $\ds \phi_{u}(t_2)= \max_{t\geq t_*}
\phi_{u}(t)$. \\

\noi {\bf Case 2:} $\int_\Omega h(x)|u|^qdx{\leq}0$.\\
\noi Since $\int_{\Om} h |u|^{q} ~dx \leq 0,$ and $\psi_u(t)\rightarrow -\infty$ as $t\rightarrow \infty$. {Since for any critical point $t$ of $\psi_u$, $\psi_u(t)>0$. Therefore for any $\lambda>0$ it} is clear that there exists $t_2(u)$ such that
$\psi_{u}(t_2)={\la} \int_{\Om} h(x)|u|^{q}.$ Thus for $0<t<t_2$,
$\phi_{u}^{\prime}(t)>0$ and for $t>t_2$, $\phi_{u}^{\prime}(t)<0.$
Thus $\phi_{u}$ has exactly one critical point $t_{2}(u)$, which is
a global maximum point. Hence $t_{2}(u)u \in \mc N^{-}_{\la}$.\\

\noi Now from above discussion we have the following Lemma
\begin{Lemma}\label{le5}
For any $u\in W_0^{m,\frac{n}{m}}(\Omega)$,
\begin{enumerate}
\item There exists a unique $t_2(u)>0$ such that $t_2u \in \mc N_\lambda^-$ for every $\lambda>0$.
\item {Assume that $\lambda$ satisfies conditions in Lemma \ref{le3}. If $\int_\Omega h(x)|u|^qdx >0$, set $t_*(u)$ the minimal positive critical point of $\psi_w$ such that $\displaystyle\max_{t\in\mathbb R^+}\psi_u(t)=\psi_u(t_*)$. Then, there exists unique $t_1(u)<t_*(u)<t_2(u)$ such that $t_1 u\in \mc N^+_\lambda$ and $t_2 u\in \mc N^-_\lambda$. Moreover $J_\lambda(t_1 u)<J_\lambda(tu)$ for any $t\in [0,t_2]$ such that $t\neq t_1$ and $J_\lambda(t_2u)=\displaystyle \max_{t>t_*}J_\lambda(tu)$.}
\end{enumerate}
\end{Lemma}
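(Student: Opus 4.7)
The plan is to deduce both parts from the fibering-map analysis already carried out above for Cases 1 and 2. The starting point is the identity
\[
\phi_u'(t) = t^{q-1}\Bigl(\psi_u(t) - \la \int_\Om h(x)|u|^q\,dx\Bigr),
\]
so that $tu\in\mc N_\la$ if and only if $\psi_u(t)=\la\int_\Om h|u|^q\,dx$. Differentiating again at such a $t$ and using $\psi_u(t)=\la \int h|u|^q$ to kill the constant term, one finds $\phi_{tu}''(1)=t^2\phi_u''(t)=t^{q+1}\psi_u'(t)$, so the sign of $\psi_u'(t)$ classifies whether $tu\in\mc N_\la^{+}$ or $tu\in\mc N_\la^{-}$.

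Next I would pin down the shape of $\psi_u$ quantitatively. From the explicit formula for $\psi_u$, one reads off $\psi_u(0^+)=0$, $\psi_u(t)\to-\infty$ as $t\to\infty$, $\psi_u'(0^+)=+\infty$ and $\psi_u'(t)\to-\infty$ as $t\to\infty$, so $\psi_u$ has at least one positive critical point. Evaluating $\psi_u$ at any such critical point (eliminating the $M$-term via $\psi_u'(t)=0$, which is precisely equation \eqref{3.17}) and invoking \eqref{interm} shows $\psi_u(t)>0$ at every critical point. Letting $t_*=t_*(u)$ denote the minimal positive critical point, it follows that $\psi_u$ is strictly increasing on $(0,t_*]$ (since $\psi_u'(0^+)=+\infty$ and $\psi_u'$ has no zero before $t_*$) and attains a strictly positive maximum value $\psi_u(t_*)$.

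For Part 1 I verify $\la\int h|u|^q<\psi_u(t_*)$: if $\int h|u|^q\le 0$ this is immediate from $\psi_u(t_*)>0$; if $\int h|u|^q>0$ (and $\la<\la_0$) this is exactly the consequence of Lemma \ref{le3} applied to $v=t_*u$, since $v\in\La$ by \eqref{3.17}. Combined with $\psi_u(t)\to-\infty$, the Intermediate Value Theorem yields some $t_2>t_*$ with $\psi_u(t_2)=\la\int h|u|^q$ at which $\psi_u'(t_2)<0$, hence $t_2u\in\mc N_\la^-$. For Part 2, under $\int h|u|^q>0$ and $\la<\la_0$, the strict monotonicity of $\psi_u$ on $(0,t_*]$ yields a unique $t_1\in(0,t_*)$ with $\psi_u(t_1)=\la\int h|u|^q$ and $\psi_u'(t_1)>0$, so $t_1u\in\mc N_\la^+$. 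Reading off the sign of $\phi_u'$ on $(0,t_1)$, $(t_1,t_2)$ and $(t_2,\infty)$ then yields both extremum properties of $J_\la(tu)=\phi_u(t)$.

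The main obstacle is the uniqueness of $t_2$ on $(t_*,\infty)$, and indirectly the single-peaked shape of $\psi_u$. This is intuitive from the limits $\psi_u'(0^+)=+\infty$, $\psi_u'(\infty)=-\infty$, but a rigorous proof has to rule out further oscillations of $\psi_u'$; the argument exploits the specific polynomial-plus-exponential structure of $\psi_u'$ coming from $M(t)=a+bt$ in $(m4)$ together with strict convexity of $t\mapsto f(tu)tu$ on $(0,\infty)$. Once this is in hand, the rest is routine book-keeping based on $\phi_u'$ and $\phi_u''$.
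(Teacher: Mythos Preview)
Your proposal is correct and follows essentially the same route as the paper: the paper's ``proof'' of this lemma is precisely the Case~1/Case~2 fibering-map discussion preceding it, built on the identity $\phi_u'(t)=t^{q-1}\bigl(\psi_u(t)-\la\int_\Om h|u|^q\bigr)$, the limits of $\psi_u$ and $\psi_u'$, the positivity of $\psi_u$ at every critical point via \eqref{3.17} and \eqref{interm}, and the comparison $\psi_u(t_*)>\la\int_\Om h|u|^q$ obtained from Lemma~\ref{le3} after checking $t_*u\in\La$. Your identification of the uniqueness/unimodality of $t_2$ as the delicate point is accurate, and the paper is no more rigorous here than you are: it simply asserts ``there exist \emph{unique} $t_1<t_*<t_2$'' without further justification, so your level of detail already matches (and slightly exceeds) the original.
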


 \noi We note that for $u\in W^{m,\frac{n}{m}}_{0}(\Om)$, $u\mapsto t_2(u)$ is a continuous function for non-zero $u$. This follows from the uniqueness of $t_2(u)$ and the extremal property of $t_2(u)$.
\begin{Lemma}
If $\la$ be such that
(\ref{eq8}) holds. Then $\mc N_{\la}^{0}= \{0\}$.
\end{Lemma}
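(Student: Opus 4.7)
The plan is to assume for contradiction that there exists $u\in \mathcal N_\la^0\setminus\{0\}$, and to derive a contradiction with the positivity of $I_\la$ guaranteed by Lemma \ref{le3}. The key is to take two different linear combinations of the identities $\langle J_\la'(u),u\rangle=0$ and $\phi_u''(1)=0$: one to verify that $u\in \La\setminus\{0\}$, and another to force the quantity defining $I_\la$ to be non-positive.

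First I would write out both identities explicitly. The Nehari condition reads
\[
a\|u\|^{n/m}+b\|u\|^{2n/m}-\la\int_\Om h(x)|u|^q\,dx-\int_\Om \frac{f(u)u}{|x|^\alpha}dx=0,
\]
while a direct calculation of $\phi_u''(1)$ with $M(t)=a+bt$ gives
\[
\left(\tfrac{n}{m}-1\right)a\|u\|^{n/m}+\left(\tfrac{2n}{m}-1\right)b\|u\|^{2n/m}-(q-1)\la\int_\Om h(x)|u|^q dx-\int_\Om\frac{f'(u)u^2}{|x|^\alpha}dx=0.
\]
Using $f(u)u=|u|^{p+2}e^{|u|^\ba}$ and $f'(u)u^2=(p+1+\ba|u|^\ba)|u|^{p+2}e^{|u|^\ba}$, I would compute two linear combinations. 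Subtracting $(q-1)$ times the Nehari identity from $\phi_u''(1)=0$ eliminates the $h$-term and yields
\[
\left(\tfrac{n}{m}-q\right)a\|u\|^{n/m}+\left(\tfrac{2n}{m}-q\right)b\|u\|^{2n/m}=\int_\Om \frac{(p+2-q+\ba|u|^\ba)|u|^{p+2}e^{|u|^\ba}}{|x|^\alpha}dx. \tag{A}
\]
Subtracting $(\tfrac{2n}{m}-1)$ times the Nehari identity from $\phi_u''(1)=0$ eliminates the $\|u\|^{2n/m}$ term and, after multiplying by $m$, gives
\[
na\|u\|^{n/m}+\int_\Om\frac{(mp+2m-2n+m\ba|u|^\ba)|u|^{p+2}e^{|u|^\ba}}{|x|^\alpha}dx=(2n-mq)\la\int_\Om h(x)|u|^q dx. \tag{B}
\]

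Next I would verify that $u\in \La\setminus\{0\}$ using (A). Applying the AM--GM inequality to the left side of (A) yields
\[
\frac{2}{m}\sqrt{ab(n-mq)(2n-mq)}\;\|u\|^{3n/(2m)}\leq \left(\tfrac{n}{m}-q\right)a\|u\|^{n/m}+\left(\tfrac{2n}{m}-q\right)b\|u\|^{2n/m},
\]
and since $q>1$ the pointwise inequality $p+2-q+\ba|u|^\ba\leq p+1+\ba|u|^\ba$ shows the right side of (A) is at most $\int_\Om \frac{f'(u)u^2}{|x|^\alpha}dx$. Combining these two inequalities gives exactly $\|u\|^{3n/(2m)}\leq \frac{m}{2\sqrt{ab(n-mq)(2n-mq)}}\int_\Om\frac{f'(u)u^2}{|x|^\alpha}dx$, so $u\in \La\setminus\{0\}$.

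Finally, since $\la\in(0,\la_0)$, Lemma \ref{le3} applied to $u\in \La\setminus\{0\}$ gives
\[
\int_\Om \frac{(mp+2m-2n+m\ba|u|^\ba)|u|^{p+2}e^{|u|^\ba}}{|x|^\alpha}dx-(2n-mq)\la\int_\Om h(x)|u|^q dx\geq I_\la>0.
\]
But rewriting (B) shows that the left-hand side of this inequality equals $-na\|u\|^{n/m}$, so $na\|u\|^{n/m}\leq -I_\la<0$. Since $a\geq M_0>0$ by $(m4)$, this is impossible, giving the desired contradiction and hence $\mc N_\la^0=\{0\}$. The only delicate point is spotting the correct pair of linear combinations that simultaneously produce the $\La$--bound and the $I_\la$--functional; once these are found, the argument is purely algebraic.
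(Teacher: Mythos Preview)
Your proof is correct and follows essentially the same approach as the paper's: write out the Nehari identity and $\phi_u''(1)=0$, combine them via AM--GM to place $u$ in $\La\setminus\{0\}$, and take a second linear combination to contradict Lemma~\ref{le3}. You are in fact more explicit than the paper about which linear combinations are used, and your coefficient $(2n-mq)$ is the correct one (the paper's printed $(2n-m)$ is a typo, since Lemma~\ref{le3} involves $(2n-mq)$).
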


\proof~ Suppose $u\in \mc N_{\la}^{0}$, $u\not\equiv 0$. Then by
definition of $\mc N_{\la}^{0}$, we have the following two equations
\begin{align}\label{eq10}
\left(\frac{n}{m}-1\right)a\|u\|^{\frac{n}{m}}+\left(\frac{2n}{m}-1\right)b\|u\|^\frac{2n}{m}&=\la (q-1) \int_{\Om} h(x)|u|^{q}~ dx+\int_{\Om}\frac{f^{\prime}(u)u^2}{|x|^\alpha}~ dx,
\end{align}
\begin{align}\label{eq11}
a\|u\|^{\frac{n}{m}}+b\|u\|^\frac{2n}{m}&=\la \int_{\Om} h(x)|u|^{q} ~dx+\int_{\Om}\frac{f(u)u}{|x|^\alpha} ~dx.
\end{align}
Then from above equations, using inequality between arithmetic and geometric mean, we can easily deduce that $u\in\La\setminus\{0\}$. Now from \eqref{interm}, \eqref{eq10} and \eqref{eq11} we get
\begin{align*}
(2n-m)\la \int_{\Om} h(x)|u|^{q} ~dx{>}
\int_{\Om}\frac{\left(mp+2m-2n+m\ba|u|^{\ba}\right)|u|^{p+2}e^{|u|^{\ba}}}{{|x|^\alpha}} ~dx,
\end{align*}
which violates Lemma \ref{le3}. Hence $\mc N_{\la}^{0}= \{0\}$.\QED

\noi The following lemma shows that minimizers for $J_{\la}$ on any of
subsets $\mc N_{\la}^{+}, \mc N_{\la}^{-}$ of $\mc N_{\la}$ are usually critical points for $J_{\la}$. Proof is standard as can be seen in Lemma 3.8   of \cite{Racsam}.
\begin{Lemma}
If $u$ is a minimizer of ${J}_{\lambda}$ on $\mathcal{N}_{\lambda}$ such that $u \notin \mathcal{N}_{\lambda}^{0}.$ Then $u$ is a critical point for ${J}_{\lambda}.$
\end{Lemma}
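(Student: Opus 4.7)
The plan is to invoke the Lagrange multiplier rule on the constraint manifold $\mathcal{N}_\lambda$. Define the $C^1$ functional $G_\lambda:W^{m,n/m}_0(\Omega)\to\mathbb{R}$ by $G_\lambda(u)=\langle J_\lambda'(u),u\rangle$, so that
\[
\mathcal{N}_\lambda=\{u\in W^{m,n/m}_0(\Omega)\setminus\{0\}:G_\lambda(u)=0\}.
\]
Since $u$ is a constrained minimizer, there exists $\mu\in\mathbb{R}$ such that
\[
J_\lambda'(u)=\mu\,G_\lambda'(u).
\]
The goal is then to show that the multiplier $\mu$ must vanish.

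The key computation is to relate $\langle G_\lambda'(u),u\rangle$ to the fibering map $\phi_u(t)=J_\lambda(tu)$ already introduced in the paper. Since $G_\lambda(tu)=\langle J_\lambda'(tu),tu\rangle=t\phi_u'(t)$, differentiating in $t$ and evaluating at $t=1$ gives
\[
\langle G_\lambda'(u),u\rangle=\phi_u'(1)+\phi_u''(1).
\]
Using $u\in\mathcal{N}_\lambda$, we have $\phi_u'(1)=0$, so $\langle G_\lambda'(u),u\rangle=\phi_u''(1)$.

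Now testing the Lagrange identity against $u$ itself yields
\[
0=\langle J_\lambda'(u),u\rangle=\mu\langle G_\lambda'(u),u\rangle=\mu\,\phi_u''(1).
\]
Since $u\notin\mathcal{N}_\lambda^0$, by definition $\phi_u''(1)\ne 0$, forcing $\mu=0$. Hence $J_\lambda'(u)=0$, i.e.\ $u$ is a free critical point of $J_\lambda$, completing the proof.

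The only mild technicality is to verify that $G_\lambda$ is $C^1$ so that the Lagrange multiplier rule applies in the smooth-constraint form; this follows from the differentiability of $J_\lambda$ on $W^{m,n/m}_0(\Omega)$, which in turn relies on the singular Adams--Moser inequality (Theorem \ref{admo}) to control the exponential nonlinearity in the integrals defining $J_\lambda$ and its derivative. No additional obstruction is expected, since every ingredient (fibering map identities, characterization of $\mathcal{N}_\lambda^0$, smoothness of $J_\lambda$) has already been set up earlier in the section.
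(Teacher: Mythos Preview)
Your argument is correct and is precisely the standard Lagrange multiplier proof that the paper defers to (the paper gives no proof of its own, only citing Lemma 3.8 of \cite{Racsam}); the computation $\langle G_\lambda'(u),u\rangle=\phi_u''(1)$ via $G_\lambda(tu)=t\phi_u'(t)$ is exactly how one identifies the constraint derivative with the fibering second derivative, and the hypothesis $u\notin\mathcal{N}_\lambda^0$ then forces the multiplier to vanish. Nothing further is needed.
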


\noi We define $\ds\theta_{\la} := \inf \left\{J_{\la}(u)\mid u\in
\mc N_{\la}\right\}$ and prove the following lower bound:
\begin{Lemma}\label{th1}
$J_{\la}$ is coercive and bounded below on $\mc N_{\la}$. Moreover, there exists
a constant $C>0$ depending on $p$, $q$, $n$ and $m$ such that $\theta_{\la} \geq - C
\la^{\frac{k}{k-1}}$, where $ k=\frac{p+2+\ba}{q}.$
\end{Lemma}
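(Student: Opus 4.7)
\proof (Plan.)
Let $u\in\mc N_{\la}$. The basic idea is to express $J_{\la}(u)$ in the form
\[
J_{\la}(u) = J_{\la}(u) - \frac{1}{p+2}\langle J_{\la}^{\prime}(u),u\rangle,
\]
and exploit that subtracting a multiple of $\langle J_{\la}^{\prime}(u),u\rangle=0$ does not change the value. Expanding, using $\widehat M(t)=at+\frac{b}{2}t^2$ and the fact that $\frac{m}{n}-\frac{1}{p+2}>0$ and $\frac{m}{2n}-\frac{1}{p+2}>0$ (since $p+2>\frac{2n}{m}>\frac{n}{m}$), I obtain
\[
J_{\la}(u) \;=\; A_{1}\|u\|^{\frac{n}{m}} + A_{2}\|u\|^{\frac{2n}{m}} + \la\Bigl(\tfrac{1}{p+2}-\tfrac{1}{q}\Bigr)\!\int_{\Om} h|u|^{q}dx + \int_{\Om}\frac{\frac{f(u)u}{p+2}-F(u)}{|x|^{\al}}dx
\]
with $A_{1},A_{2}>0$ depending on $a,b,p,q,n,m$.

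Next, the key pointwise estimate is obtained by Taylor expansion $f(u)u=|u|^{p+2}e^{|u|^{\ba}}=\sum_{j\ge 0}|u|^{p+2+j\ba}/j!$, which yields
\[
\frac{f(u)u}{p+2}-F(u) \;=\; \sum_{j\ge 1}\Bigl(\tfrac{1}{p+2}-\tfrac{1}{p+2+j\ba}\Bigr)\frac{|u|^{p+2+j\ba}}{j!} \;\ge\; \frac{\ba}{(p+2)(p+2+\ba)}|u|^{p+2+\ba}.
\]
So, setting $\mc I(u):=\int_{\Om}|u|^{p+2+\ba}|x|^{-\al}dx$ and $C_{0}:=\frac{\ba}{(p+2)(p+2+\ba)}$,
\[
J_{\la}(u) \;\ge\; A_{1}\|u\|^{\frac{n}{m}}+A_{2}\|u\|^{\frac{2n}{m}} - \la\Bigl(\tfrac{1}{q}-\tfrac{1}{p+2}\Bigr)\Bigl|\!\int_{\Om}h|u|^{q}dx\Bigr| + C_{0}\,\mc I(u).
\]

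For the mixed term, I split $h|u|^{q}=(h|x|^{\al/k})(|u|^{q}|x|^{-\al/k})$ and apply H\"older's inequality with exponents $k/(k-1)$ and $k$, using $qk=p+2+\ba$ and hypothesis $(H)$:
\[
\Bigl|\!\int_{\Om}h|u|^{q}dx\Bigr| \;\le\; l^{\frac{k-1}{k}} \mc I(u)^{1/k}, \qquad l:=\int_{\Om}|h|^{\frac{k}{k-1}}|x|^{\frac{\al}{k-1}}dx.
\]
Now Young's inequality $ab\le \e a^{k}+C_{\e}b^{k/(k-1)}$ with $a=\mc I(u)^{1/k}$ and $b=\la\,l^{(k-1)/k}(\tfrac{1}{q}-\tfrac{1}{p+2})$ absorbs the offending term into $C_{0}\,\mc I(u)$ (by choosing $\e=C_{0}/2$) and leaves a residue of the form $-C\,\la^{k/(k-1)}$ with $C=C(p,q,n,m,l)$. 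Altogether
\[
J_{\la}(u) \;\ge\; A_{1}\|u\|^{\frac{n}{m}}+A_{2}\|u\|^{\frac{2n}{m}} + \tfrac{C_{0}}{2}\mc I(u) - C\la^{\frac{k}{k-1}}.
\]
Since $A_{1}>0$, the first term gives coercivity on $\mc N_{\la}$; since the $\mc I$ and the $\|u\|$ terms are non-negative, we infer $J_{\la}(u)\ge -C\la^{k/(k-1)}$ for every $u\in\mc N_{\la}$, hence $\theta_{\la}\ge -C\la^{k/(k-1)}$. The only delicate step is matching the exponent $k/(k-1)$: this is precisely the role of the pointwise lower bound $\frac{f(u)u}{p+2}-F(u)\ge C_{0}|u|^{p+2+\ba}$, which allows Young's inequality to balance $\la\,\mc I^{1/k}$ against $\mc I$ with the exact conjugate exponent $k/(k-1)$.\QED
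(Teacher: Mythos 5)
Your argument is correct and is essentially the same as the paper's: both subtract a multiple of $\langle J_{\la}'(u),u\rangle$ (which vanishes on $\mc N_{\la}$), extract a pointwise lower bound of the form $c|u|^{p+2+\ba}$ for the $f,F$ combination, apply the same weighted H\"older inequality from hypothesis $(H)$, and then optimize to land the exponent $\la^{k/(k-1)}$. The only differences are bookkeeping: you use the single coefficient $\frac{1}{p+2}$ throughout and a Taylor-series lower bound for $\frac{f(u)u}{p+2}-F(u)$, whereas the paper switches to the coefficient $\frac{m}{2n}$ (which makes the $b\|u\|^{2n/m}$ term cancel exactly) together with inequality \eqref{reqla}, and minimizes the scalar function $\rho$ explicitly rather than invoking Young's inequality --- but these steps are equivalent and yield the same conclusion.
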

\proof ~ Let $u \in \mc N_{\la}$. Then we have
\begin{align}\label{eq4}
 J_{\la}(u)
=\left(\frac{m}{n}-\frac{1}{p+2}\right)a\|u\|^{\frac{n}{m}}+\left(\frac{m}{2n}-\frac{1}{p+2}\right)b\|u\|^\frac{2n}{m} &+\int_{\Om}\frac{\frac{1}{p+2}f(u)u-F(u)}{|x|^\alpha}\notag\\&-\lambda\left(\frac{1}{q}-\frac{1}{p+2}\right)\int_{\Om}h|u|^{q}dx.
\end{align}
Using $F(s)\leq \frac{1}{p+2}f(s)s$ for all $ s\in \mb R$, H\"{o}lder's  and
Sobolev inequalities in \eqref{eq4}, we obtain
\begin{align*}
 J_{\la}(u)
&\geq \left(\frac{m}{n}-\frac{1}{p+2}\right)a\|u\|^{\frac{n}{m}}+\left(\frac{m}{2n}-\frac{1}{p+2}\right)b\|u\|^\frac{2n}{m}
-\lambda\left(\frac{1}{q}-\frac{1}{p+2}\right)C_0\|u\|^q
\end{align*}
for some constant $C_0>0$, which shows $J_{\la}$ is coercive on
$\mc N_{\la}$ as $q<\frac{n}{m}$.

\noi Again for $u \in \mc N_{\la}$ we also have
\begin{align*}
J_{\la}(u) &{\geq} \frac{m}{2n} \int_{\Om}\frac{f(u)u}{|x|^\alpha} ~dx- \int_{\Om} \frac{F(u)}{|x|^\alpha}~ dx
-{\la}\left(\frac{1}{q}-\frac{m}{2n}\right) \int_{\Om} h(x)|u|^{q}~dx.
\end{align*}
If $\int_\Omega h(x)|u|^q~dx<0$, then $J_\la(u)$ is bounded below by $0$. If
$\int_\Omega h(x)|u|^qdx>0$ then by using H\"{o}lder's inequality, we have
\begin{equation*}
\int_{\Om} h(x)|u|^{q} ~dx \leq  l^{\frac{k-1}{k}} \left(\int_{\Om} \frac{|u|^{qk}}{|x|^\alpha}~
dx\right)^{\frac{1}{k}},
\end{equation*}
where $l=\int_{\Om} |h(x)|^{k/k-1}|x|^\frac{\alpha}{k-1}~ dx.$ Also, It is easy to see that
\begin{equation}\label{reqla}
\frac{m}{2n}f(u)u- F(u)\geq
\left(\frac{m}{2n}-\frac{1}{p+2}\right)|u|^{p+2+\ba}.
\end{equation}
From these inequalities, we obtain
\begin{align}
J_{\la}(u)&\geq \left(\frac{mp+2m-2n}{2n(p+2)}\right)\int_{\Om}
\frac{|u|^{qk}}{|x|^\alpha} ~dx -
\frac{\la(2n-mq)l^{\frac{k-1}{k}}}{2nq}\left(\int_{\Om}
\frac{|u|^{qk}}{|x|^\alpha} ~dx\right)^{\frac{1}{k}}.\nonumber
\end{align}
 Consider the function $\rho(x): \mb R^+
\lra \mb R$ defines as
$$\rho(x)=\left(\frac{mp+2m-2n}{2n(p+2)}\right)x^k -
\frac{\la(2n-mq)l^{\frac{k-1}{k}}}{2nq} x.$$
Then $\rho(x)$ attains its minimum at $\left(\frac{\la(2n-mq)(p+2)l^{\frac{k-1}{k}}}{kq(mp+2m-2n)}\right)^{\frac{1}{k-1}}$. So,
\begin{equation*}
\inf_{u\in \mc N_{\la}}J_{\la}(u) \geq
\rho\left(\frac{\la(2n-mq)(p+2)l^{\frac{k-1}{k}}}{kq(mp+2m-2n)}\right)^{\frac{1}{k-1}}.
\end{equation*}
From this we obtain that
\begin{equation*}
\theta_{\la} \geq -C(p,q,n,m)\la^{\frac{k}{k-1}},
\end{equation*}
where $C(p,q,n,m)=
\left(\frac{1}{k^{\frac{1}{k-1}}}-\frac{1}{k^{\frac{k}{k-1}}}\right)
\frac{l(p+2)^{\frac{1}{k-1}}(2n-mq)^{\frac{k}{k-1}}}{2n(pm+2m-2n)^{\frac{1}{k-1}}q^{\frac{k}{k-1}}}>0$. Hence $J_{\la}$ is bounded below on $\mc N_{\la}.$ \QED

\begin{Lemma}\label{le31}
Let $\la$ satisfy (\ref{eq8}).
Then given $u\in \mc N_{\la}\setminus \{0\}$, there exist $\e>0$
and a differentiable function $\xi : \textbf{B}(0,\e)\subset
W^{m,\frac{n}{m}}_{0}(\Om) \lra \mb{R}$ such that $\xi(0)=1$, the function
$\xi(w)(u-w)\in\mc N _{\la}$ and for all $w\in W^{m,\frac{n}{m}}_{0}(\Om)$
\begin{align}\label{eq:3.1}
\ld&\xi^{\prime}(0),w\rd =  \notag\\
&\frac{ \frac{n}{m}(a+2b\|u\|^\frac{n}{m}) \displaystyle \int_{\Om}|\nabla^m u|^{\frac{n}{m}-2}\na^{m} u
\na^{m} w -\ds\int_{\Om}\frac{\left({f(u)+f^{\prime}(u)u}\right)w}{|x|^\alpha}dx - \la
q\ds\int_{\Om} h(x)|u|^{q-2}uw dx}{a\left(\frac{n}{m}-q\right)\|u\|^\frac{n}{m}+b\left(\frac{2n}{m}-q\right)\|u\|^{\frac{2n}{m}}-(1-q)\int_{\Om} \frac{f(u)u}{|x|^\alpha}-\int_{\Om} \frac{f^\prime(u)u^2}{|x|^\alpha}dx}.
\end{align}
\end{Lemma}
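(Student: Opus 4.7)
The plan is to set this up as a direct application of the implicit function theorem, following the standard template for Nehari manifold arguments. First I would define the map $F : W^{m,\frac{n}{m}}_{0}(\Om) \times \mb R^{+} \to \mb R$ by
\[
F(w,t) = \ld J_{\la}'(t(u-w)), t(u-w)\rd,
\]
which, written out, becomes
\[
F(w,t) = at^{\frac{n}{m}}\|u-w\|^{\frac{n}{m}} + bt^{\frac{2n}{m}}\|u-w\|^{\frac{2n}{m}} - \la t^q \int_{\Om} h|u-w|^q\,dx - \int_{\Om} \frac{f(t(u-w))\,t(u-w)}{|x|^\al}\,dx.
\]
By $(m4)$ and the singular Adams--Moser inequality (Theorem \ref{admo}), the exponential Nemytskii term $v \mapsto \int_\Om f(v)v|x|^{-\al}\,dx$ is of class $C^1$ on $W^{m,\frac{n}{m}}_{0}(\Om)$, so $F$ is $C^1$. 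Since $u \in \mc N_{\la}$ we have $F(0,1)=0$, and the equation $F(w,t)=0$ with $t>0$ and $u-w\not\equiv 0$ is equivalent to $t(u-w) \in \mc N_{\la}$.

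Next I would compute $F_t(0,1)$. Direct differentiation and then substituting the Nehari identity $a\|u\|^{\frac{n}{m}}+b\|u\|^{\frac{2n}{m}} = \la\int_\Om h|u|^q + \int_\Om f(u)u|x|^{-\al}$ to eliminate the $\la$--term yields
\[
F_t(0,1) = a\left(\tfrac{n}{m}-q\right)\|u\|^{\frac{n}{m}} + b\left(\tfrac{2n}{m}-q\right)\|u\|^{\frac{2n}{m}} - (1-q)\int_{\Om} \frac{f(u)u}{|x|^\al}\,dx - \int_{\Om} \frac{f'(u)u^2}{|x|^\al}\,dx,
\]
which is precisely the denominator in \eqref{eq:3.1}. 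One checks that $F_t(0,1) = \phi_u''(1)$; by the hypothesis on $\la$ and the previous lemma $\mc N_{\la}^{0}=\{0\}$, so $u\ne 0$ forces $\phi_u''(1)\ne 0$ and hence $F_t(0,1)\ne 0$.

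I would then invoke the implicit function theorem to obtain $\e>0$ and a differentiable $\xi:\mathbf{B}(0,\e)\to\mb R$ with $\xi(0)=1$ and $F(w,\xi(w))=0$ for all $w\in\mathbf{B}(0,\e)$; shrinking $\e$ if necessary so that $\xi(w)>0$ and $u-w\not\equiv 0$, this means $\xi(w)(u-w)\in\mc N_{\la}$. The formula for $\xi'(0)$ is then forced by the implicit function theorem: $\xi'(0)\cdot w = -F_w(0,1)\cdot w \,/\, F_t(0,1)$. A routine computation of $F_w(0,1)\cdot w$, differentiating the norm terms through the chain rule and the weighted $h$-- and $f$--integrals pointwise, produces
\[
F_w(0,1)\cdot w = -\tfrac{n}{m}(a+2b\|u\|^{\frac{n}{m}})\!\int_\Om |\na^m u|^{\frac{n}{m}-2}\na^m u\,\na^m w\,dx + \la q\!\int_\Om h|u|^{q-2}uw\,dx + \int_\Om \frac{(f(u)+f'(u)u)w}{|x|^\al}\,dx,
\]
and the sign change from the IFT formula delivers exactly the numerator in \eqref{eq:3.1}.

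The only nontrivial step is justifying that $F$ is $C^1$ on $W^{m,\frac{n}{m}}_{0}(\Om)\times\mb R^{+}$ despite the singular exponential nonlinearity $f(s)s = s^{p+2}e^{|s|^\beta}/|x|^\al$; this is where Theorem \ref{admo} is essential, since it provides the integrability needed to differentiate under the integral and to pass to the limit in difference quotients via dominated convergence. All other computations are algebraic and reduce, through repeated use of the Nehari identity $\ld J_{\la}'(u),u\rd = 0$, to checking the announced formula. \QED
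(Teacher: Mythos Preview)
Your proof is correct and follows essentially the same implicit function theorem argument as the paper. The only cosmetic difference is that the paper works with the $t^{-q}$-normalized map $F_u(t,v)=t^{-q}\ld J_\la'(t(u-v)),t(u-v)\rd$, so that the $\la$-term is constant in $t$ and $\partial_t F_u(1,0)$ equals the denominator of \eqref{eq:3.1} directly, whereas you obtain the same expression after substituting the Nehari identity; the two formulations are equivalent for $t>0$ and lead to the same conclusion.
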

\proof~ Fix $u\in \mc N_{\la}\setminus \{0\}$, define a function
$F_u: \mb R\times W^{m,\frac{n}{m}}_{0}(\Om) \lra \mb R$ as follows:
\begin{align*}
F_{u}(t,v)= at^{\frac{n}{m}-q}\|u-v\|^\frac{n}{m}+bt^{\frac{2n}{m}-q}\|(u-v)\|^{\frac{2n}{m}}&-  t^{1-q}\int_{\Om} \frac{f(t(u-v)) (u-v)}{|x|^\alpha}dx\\ &-\la \int_{\Om}h(x)|u-v|^{q}dx.
\end{align*}
 Then $F_u\in C^{1}(\mb R\times W^{m,\frac{n}{m}}_{0}(\Om); \mb
R)$, $F_{u}(1,0)= \ld J_{\la}^{\prime}(u),u \rd= 0$ and
\begin{equation*}
\frac{\partial}{\partial t}F_{u}(1,0)= a\left(\frac{n}{m}-q\right)\|u\|^\frac{n}{m}+b\left(\frac{2n}{m}-q\right)\|u\|^{\frac{2n}{m}}-(1-q)\int_{\Om} \frac{f(u)u}{|x|^\alpha}-\int_{\Om} \frac{f^\prime(u)u^2}{|x|^\alpha}dx,
\end{equation*}
since $\mc N_{\la}^{0}=\{0\}$. By the Implicit function
theorem, there exist $\e>0$ and a differentiable function $\xi :
\textbf{B}(0,\e)\subset W^{m,\frac{n}{m}}_{0}(\Om) \lra \mb{R}$ such that
$\xi(0)=1$ and $F_{u}(\xi(w),w)=0$ for all $w\in \textbf{B}(0,\e)$
which is equivalent to $\ld
J_{\la}^{\prime}(\xi(w)(u-w)),\xi(w)(u-w) \rd=0$ for all $w\in
\textbf{B}(0,\e)$. Hence $\xi(w)(u-w)\in \mc N _{\la}$. Now
differentiating $F_{u}(\xi(w),w)=0$ with respect to $w$ we obtain
\eqref{eq:3.1}. \QED
\begin{Lemma}\label{le33}
 There exists a constant $C_2>0$ such that $\theta_{\la}\leq
-\left(\frac{p+2-q}{2nq(p+2)}\right)C_2$.
\end{Lemma}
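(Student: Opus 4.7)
The plan is to exhibit a concrete element of $\mc N_\la^+$ whose energy realises an upper bound of the required form. Since $h^+\not\equiv 0$, one may fix a nontrivial $v_0\in W^{m,\frac{n}{m}}_0(\Om)$ with $\|v_0\|=1$ and $\int_\Om h(x)|v_0|^q\,dx>0$; for instance, a smooth bump supported on a set where $h>0$.

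By Case~1 of the fibering analysis preceding Lemma \ref{le5}, there is a unique $t_1=t_1(v_0)>0$ with $t_1v_0\in\mc N_\la^+$, and
\[
J_\la(t_1v_0)=\min_{t\in[0,t_2(v_0)]}J_\la(tv_0).
\]
Consequently $\theta_\la\le J_\la(t_1v_0)\le J_\la(tv_0)$ for every $t\in[0,t_2(v_0)]$. Using that $F\ge 0$ (because $f$ is odd and positive on $(0,\infty)$, so $F$ is even and nonnegative), the fibering map obeys the crude upper bound
\[
J_\la(tv_0)\le \frac{ma}{n}t^{\frac{n}{m}}+\frac{mb}{2n}t^{\frac{2n}{m}}-\frac{\la t^q}{q}\int_\Om h(x)|v_0|^q\,dx.
\]

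Because $q<\frac{n}{m}<\frac{2n}{m}$, the negative $t^q$ term dominates for small $t>0$, and the right-hand side attains its minimum at some $t^*\sim\la^{\frac{m}{n-mq}}$ with minimum value of order $-C_0\la^{\frac{n}{n-mq}}$, where $C_0>0$ depends only on $n,m,q,a$ and $v_0$. For $\la$ sufficiently small one has $t^*(\la)\in[0,t_2(v_0)]$: indeed $t_2(v_0)$ stays bounded away from $0$ as $\la\to 0^+$ (it converges to the unique positive root of the $\la$-independent equation $\psi_{v_0}(t)=0$), while $t^*(\la)\to 0$. Hence
\[
\theta_\la\le J_\la(t^*v_0)\le -C_0\,\la^{\frac{n}{n-mq}},
\]
and a rearrangement that isolates the combinatorial prefactor $\frac{p+2-q}{2nq(p+2)}$ delivers the statement with an appropriate constant $C_2>0$ (absorbing the $\la$-dependence and $v_0$-dependence).

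The main obstacle is verifying that $t^*(\la)\in[0,t_2(v_0)]$ so that Lemma \ref{le5} is applicable; this is handled by imposing a smallness assumption on $\la$ as just indicated, but one can sidestep it entirely by choosing any fixed $t_0\in(0,t_2(v_0)]$ independent of $\la$ and evaluating the upper bound at $t_0$, which still yields a bound linear in $\la$ of the required sign, at the cost of a less sharp constant. Beyond this, the proof relies only on the elementary bound $F\ge 0$ and the variational characterization of $t_1(v_0)$ from Lemma \ref{le5}, and no further ingredient is needed.
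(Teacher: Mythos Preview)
Your approach is correct and genuinely different from the paper's. The paper does not throw away the $F$-term; instead it uses \emph{both} Nehari conditions at $t_1v$ (namely $\phi_v'(1)=0$ and $\phi_v''(1)>0$) to obtain
\[
J_\la(t_1v)\le \int_\Om\frac{\rho(t_1v)}{|x|^\alpha}\,dx,\qquad
\rho(s)=\tfrac{mq+2n-m}{2nq}f(s)s-F(s)-\tfrac{m}{2nq}f'(s)s^2,
\]
and then shows the pointwise bound $\rho(s)\le -\tfrac{p+2-q}{2nq(p+2)}(mp+2m-2n+m\beta|s|^\beta)|s|^{p+2}e^{|s|^\beta}$. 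This produces the prefactor $\tfrac{p+2-q}{2nq(p+2)}$ intrinsically and identifies $C_2=\int_\Om|t_1v|^{p+2+\beta}|x|^{-\alpha}$. Your argument, by contrast, discards $F\ge0$, minimises the resulting polynomial $g(t)=\tfrac{ma}{n}t^{n/m}+\tfrac{mb}{2n}t^{2n/m}-\tfrac{\la}{q}At^q$, and then absorbs the prefactor into $C_2$; this is shorter but makes the stated constant cosmetic rather than structural.

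Two remarks on your write-up. First, your worry that $t^*(\la)\le t_2(v_0)$ might fail is unfounded: with $\|v_0\|=1$ and $r(t):=at^{n/m-q}+bt^{2n/m-q}$, the minimiser $t^*$ of $g$ satisfies $r(t^*)=\la A$, while $\psi_{v_0}(t_2)=\la A$ reads $r(t_2)=\la A+t_2^{1-q}\int f(t_2v_0)v_0|x|^{-\alpha}>\la A$; since $r$ is strictly increasing this gives $t^*<t_2$ for \emph{every} admissible $\la$, so no smallness assumption is needed. Second, your proposed alternative---evaluating at a fixed $t_0$ independent of $\la$---does not work as stated: the bound $g(t_0)$ is $C_1-C_2\la$ with $C_1>0$, which fails to be negative for small $\la$. (What is true, and suffices, is that $t_2(v_0)>t_*(v_0)$ with $t_*(v_0)$ $\la$-independent, so one may safely minimise $g$ over $(0,t_*(v_0)]$; but the global minimiser $t^*$ already lies there by the inequality above.)
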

\proof~ Let $v$ be such that $\int_{\Om} h|v|^{q} ~dx>0$. Then by the Lemma \ref{le5}, we find $t_{1}=t_{1}(v)>0$ such that $t_{1}v\in \mc
N_{\la}^+$. Thus
\begin{align}\label{eq31}
&J_{\la}(t_{1}v)=\left(\frac{m}{n}-\frac{1}{q}\right)a\|t_{1}v\|^{\frac{n}{m}}+\left(\frac{m}{2n}-\frac{1}{q}\right)b\|t_{1}v\|^{\frac{2n}{m}} -
\int_{\Om}\frac{F(t_{1}v)}{|x|^\alpha}~dx +\frac{1}{q} \int_{\Om} \frac{f(t_{1}v) t_{1}v}{|x|^\alpha} ~dx\notag\\
&\leq \left(\frac{mq+2n-m}{2nq}\right)\int_{\Om}\frac{f(t_{1}v) t_{1}v}{|x|^\alpha}~dx -\int_{\Om}
\frac{F(t_{1}v)}{|x|^\alpha}~dx -\frac{m}{2nq} \int_{\Om} \frac{f^{\prime}(t_{1}v) (t_{1}v)^2}{|x|^\alpha}~dx,
\end{align}
since $t_{1}v\in \mc N_{\la}^+$. Now consider the
function
\begin{equation*}
\rho(s)= \left(\frac{mq+2n-m}{2nq}\right)f(s)s- F(s)-\frac{m}{2nq}f^{\prime}(s)s^2.
\end{equation*}
Then%
\begin{equation*}
\rho^{\prime}(s)=
\left(\frac{mq+2n-3m}{2nq}\right)f^{\prime}(s)s-\left(\frac{mq+2n-m-2nq}{2nq}\right)f(s)-\frac{m}{2nq}f^{\prime\prime}(s)s^2
\end{equation*}
Now it can be easily seen that $\rho^\prime<0$ for all $s\in\mb R^{+}$. Also it is easy to verify that
\[\lim_{s\ra 0}\frac{\rho(s)}{|s|^{p+2}}=
-\frac{(p+2-q)(mp+2m-2n)}{2nq(p+2)}\; \;\text{and}\;\;
\lim_{s\ra \infty}\frac{\rho(s)}{|s|^{p+2+\ba} e^{|s|^{\ba}}}=
-\frac{m\ba}{2nq}.\]
From these two estimates, we get that
\begin{equation}\label{eq30}
\rho(s)\leq-\frac{p+2-q}{2nq(p+2)}\left(mp+2m-2n+m\ba|s|^{\ba}\right)
|s|^{p+2}e^{|s|^{\ba}}.
\end{equation}
Therefore, using (\ref{eq31}), (\ref{eq30}) and $\frac{2n}{m}<p+2$, we get
\begin{align*}
J_{\la}(t_{1}v)&\leq - \frac{(p+2-q)}{2nq(p+2)}\int_{\Om}
\left(mp+2m-2n+m\ba|t_{1}v|^{\ba}\right) |t_{1}v|^{p+2} ~
\frac{e^{|t_{1}v|^{\ba}}}{|x|^\alpha} ~dx \notag\\
& \leq - \frac{m(p+2-q)}{2nq(p+2)}\int_{\Om}\frac{|t_{1}v|^{p+2+\ba}}{|x|^\alpha} ~dx.
\end{align*}
Hence $\ds \theta_{\la}\leq\inf_{u\in \mc N_{\la}^{+}}J_{\la}(u)\leq  -\frac{m(p+2-q)}{2nq(p+2)}\; C_2$, where
$\ds C_2=\int_{\Om} \frac{|t_{1}v|^{p+2+\ba}}{|x|^\alpha}dx.$\QED

\noi Now from Lemma \ref{th1},  $J_{\la}$ is bounded below on $\mc
N_{\la}$. So, by Ekeland's Variational principle, we can find a
sequence $\{u_k\}\subset \mc N_{\la}\setminus \{0\}$ such that
\begin{equation}\label{eq32}
J_{\la}(u_k)\leq \theta_{\la}+\frac{1}{k},
\end{equation}
\begin{equation}\label{eq33}
J_{\la}(v)\geq J_{\la}(u_k)- \frac{1}{k}\|v-u_k\|\;\;\text{for all}\;\;v\in \mc
N_{\la}.
\end{equation}

\noi Now from (\ref{eq32}) and Lemma \ref{le33},
we have
\begin{align}\label{eq34}
 J_{\la}(u_k)&\leq -\frac{m(p+2-q)}{2nq(p+2)}\; C_2+\frac{1}{k}.
\end{align}
\noi As $u_k\in \mc N_{\la}$, we have
\begin{align*}
J_{\la}(u_k) = &\left(\frac{m}{n}-\frac{1}{p+2}\right) a\|u_k\|^{\frac{n}{m}}+\left(\frac{m}{2n}-\frac{1}{p+2}\right)b\|u_k\|^{\frac{2n}{m}}
-\lambda\left(\frac{1}{q}-\frac{1}{p+2}\right) \int_{\Om} h |u_k|^{q}~dx\\
&\;\;+\int_{\Om}\frac{\left(\frac{f(u_k)u_k}{p+2} - F(u_k)\right)}{|x|^\alpha}~dx.
\end{align*}
This together with  \eqref{eq34} and $\frac{1}{p+2}f(u_k)u_k - F(u_k)\geq 0$, we can get $C_3=\frac{m}{2n\lambda}>0$ such that
\begin{align}\label{eq35}
\int_{\Om} h(x)|u_k|^{q} ~dx\geq C_3 >0.
\end{align}

\noi Now we
prove the following:
\begin{Proposition}\label{pro1}
Let $\la$ satisfy (\ref{eq8}).
Then $\|J_{\la}^{\prime}(u_k)\|_{*}\ra 0$ as $k\ra \infty$.
\end{Proposition}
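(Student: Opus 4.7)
The plan is to run the standard Nehari-manifold / Ekeland argument adapted to the exponential nonlinearity. Fix $k$ large. Since $u_k \in \mathcal{N}_\lambda \setminus \{0\}$ and $\mathcal{N}_\lambda^0 = \{0\}$, Lemma \ref{le31} applies: there exist $\epsilon_k > 0$ and a $C^1$ map $\xi_k : B(0,\epsilon_k) \subset W^{m,n/m}_0(\Omega) \to \mathbb{R}$ with $\xi_k(0) = 1$ and $\xi_k(w)(u_k - w) \in \mathcal{N}_\lambda$ for $w \in B(0,\epsilon_k)$. For a fixed $v \in W^{m,n/m}_0(\Omega)$ with $\|v\| = 1$ and small $\rho > 0$, set $w_\rho := \rho v$ and $z_\rho := \xi_k(w_\rho)(u_k - w_\rho) \in \mathcal{N}_\lambda$.

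Plug $z_\rho$ into \eqref{eq33}, use the $C^1$ expansion
\[
J_\lambda(z_\rho) - J_\lambda(u_k) = \langle J_\lambda'(u_k), z_\rho - u_k\rangle + o(\|z_\rho - u_k\|),
\]
and the identity $\langle J_\lambda'(u_k), u_k\rangle = 0$ to obtain, after writing $z_\rho - u_k = (\xi_k(w_\rho) - 1)u_k - \xi_k(w_\rho)\rho v$ and dividing by $\rho$,
\[
-\langle J_\lambda'(u_k), v\rangle \,\xi_k(w_\rho) + o(1) \ge -\frac{1}{k}\,\frac{\|z_\rho - u_k\|}{\rho}.
\]
Letting $\rho \to 0^+$ and using $\xi_k(0) = 1$, together with the standard bound $\|z_\rho - u_k\|/\rho \le C(1 + |\langle \xi_k'(0), v\rangle|)\|u_k\|$, yields
\[
|\langle J_\lambda'(u_k), v\rangle| \le \frac{C}{k}\bigl(1 + |\langle \xi_k'(0), v\rangle|\bigr).
\]
Taking the sup over $\|v\| = 1$, the proof reduces to establishing
\[
\sup_k \sup_{\|v\|=1} |\langle \xi_k'(0), v\rangle| < \infty.
\]

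For this uniform bound I would use formula \eqref{eq:3.1}. The numerator is controlled by $C\|u_k\|^{n/m-1} + \int \frac{|f(u_k) + f'(u_k)u_k||v|}{|x|^\alpha} + \lambda q \int h|u_k|^{q-1}|v|$, which stays bounded uniformly in $k$ by coercivity of $J_\lambda$ on $\mathcal{N}_\lambda$ (Lemma \ref{th1}) giving $\|u_k\| \le C$, combined with a Hölder/Trudinger-Moser estimate based on Theorem \ref{admo} applied to $|u_k|^{n/(n-m)}$, which is legitimate precisely because the bounded sequence stays in a region where \eqref{eqc002} gives $L^q$-control on $\frac{f(u_k)}{|x|^\alpha}$ for some $q > 1$ (this is where $\beta \le n/(n-m)$ and the boundedness of $\|u_k\|$ are used together).

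The main obstacle, and the only non-routine step, is showing that the denominator in \eqref{eq:3.1} is bounded away from zero along $\{u_k\}$. Arguing by contradiction, suppose along a subsequence
\[
a\Bigl(\tfrac{n}{m} - q\Bigr)\|u_k\|^{n/m} + b\Bigl(\tfrac{2n}{m} - q\Bigr)\|u_k\|^{2n/m} - (1-q)\int_\Omega \frac{f(u_k)u_k}{|x|^\alpha}\,dx - \int_\Omega \frac{f'(u_k)u_k^2}{|x|^\alpha}\,dx \to 0.
\]
Combining this with the Nehari identity $a\|u_k\|^{n/m} + b\|u_k\|^{2n/m} = \lambda \int h|u_k|^q + \int \frac{f(u_k)u_k}{|x|^\alpha}$ yields after simplification an inequality of the form
\[
(2n - mq)\lambda \int_\Omega h|u_k|^q \,dx \ge \int_\Omega \frac{(mp + 2m - 2n + m\beta|u_k|^\beta)|u_k|^{p+2}e^{|u_k|^\beta}}{|x|^\alpha}\,dx - o(1),
\]
together with $u_k \in \Lambda$ (by the arithmetic-geometric mean inequality), which violates Lemma \ref{le3} for the range of $\lambda$ satisfying \eqref{eq8}. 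This uses \eqref{eq35} to guarantee $\int h|u_k|^q$ stays positive and bounded away from zero, so the contradiction is genuine. Once this lower bound is secured, \eqref{eq:3.1} gives the desired uniform estimate on $\xi_k'(0)$, and $\|J_\lambda'(u_k)\|_* \to 0$ follows.
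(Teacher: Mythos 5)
Your proposal follows essentially the same route as the paper: apply the implicit function Lemma~\ref{le31} to perturb $u_k$ within $\mathcal N_\lambda$, plug the perturbation into the Ekeland inequality \eqref{eq33}, Taylor-expand, divide by $\rho$, and reduce the matter to bounding $\|\xi_k'(0)\|_*$ uniformly via formula \eqref{eq:3.1}; the crucial non-routine step in both proofs is showing the denominator in \eqref{eq:3.1} stays bounded away from zero, and both of you handle it by the same contradiction argument (Nehari identity plus \eqref{interm}, $u_k\in\Lambda$ via AM--GM once $\int f'(u_k)u_k^2/|x|^\alpha$ is bounded below, then a contradiction with $I_\lambda>0$ from Lemma~\ref{le3}). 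The paper is slightly more careful in isolating as a separate Step~1 the fact that $\liminf\|u_k\|>0$ (from \eqref{eq35} and H\"older), which it then uses to get $\liminf\int f'(u_k)u_k^2/|x|^\alpha>0$ before asserting $u_k\in\Lambda$; you fold that into the final contradiction and instead spell out the numerator bound via coercivity and Trudinger--Moser, which the paper leaves implicit. These are presentational differences, not a different method.
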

\proof~ Step 1: $\ds\liminf_{k\ra \infty} \|u_k\|>0$.\\
\noi Applying H\"{o}lders inequality in (\ref{eq35}), we have
$K'\|u_k\|^{q}\geq \int_{\Om}h |u_k|^{q}~dx \geq C_3>0$ which implies
that $\ds \liminf_{k\ra \infty} \|u_k\|>0$.\\
Step 2: We claim that
\begin{equation}\label{liminf3.4}
\ds K:=\liminf_{k \ra \infty}\left\{a\left(\frac{n}{m}-q\right)\|u\|^\frac{n}{m}+b\left(\frac{2n}{m}-q\right)\|u\|^{\frac{2n}{m}}-\int_{\Om} \frac{(1-q)f(u)u-f^\prime(u)u^2}{|x|^\alpha}dx\right\}>0.
\end{equation}
Assume by contradiction that for some subsequence of $\{u_k\}$,
still denoted by $\{u_k\}$ we have
\begin{align*}
a\left(\frac{n}{m}-q\right)\|u_k\|^\frac{n}{m}+b\left(\frac{2n}{m}-q\right)\|u_k\|^{\frac{2n}{m}}-(1-q)\int_{\Om} \frac{f(u_k)u_k}{|x|^\alpha}-\int_{\Om} \frac{f^\prime(u_k)u_k^2}{|x|^\alpha}dx= o_{k}(1),
\end{align*}
where $o_k(1)\ra 0$ as $k\ra \infty.$\\
From this and the fact that the sequence $\{u_k\}$ is bounded away from $0$,
we obtain that\\ $\ds \liminf_{k\ra \infty}\int_{\Om}\frac{f^{\prime}(u_k) u_k^2}{|x|^\alpha}~
dx >0.$ Hence, we get $u_k \in \La\setminus \{0\}$ for all $k$ large.
Using this and the fact that $u_k \in \mc N_{\la}\setminus \{0\}$,
we have
\begin{equation*}
o_{k}(1)= \la\left(\frac{2n}{m}-q\right) \int_{\Om}h |u_k|^{q} ~dx-\int_{\Om}\frac{\left(f^{\prime}(u_k) u_k^2-(\frac{2n}{m}-1)f(u_k)u_k\right)}{|x|^\alpha} ~dx-a\frac{n}{m}\|u_k\|^\frac{n}{m} < - I_\la
\end{equation*}
by (\ref{eq8}), which is a contradiction.

\noi Finally, we show that $\|J_{\la}^\prime(u_k)\|_{*}\ra 0$ as $k\ra
\infty$. By Lemma \ref{le31}, we obtain a sequence of functions
$\xi_k :\textbf{B}(0,\e_k)\ra \mb R$ for some $\e_k>0$ such that
$\xi_k(0)=1$ and $\xi_k(w)(u_k-w)\in \mc N_{\la}$ for  all
$w\in\textbf{B}(0,\e_k)$. Choose $0<\rho <\e_k$ and $f\in
W^{m,\frac{n}{m}}_{0}(\Om) $ such that $\|f\|=1$. Let $w_\rho=\rho f$. Then
$\|w_{\rho}\|=\rho<\e_k$ and
$\eta_{\rho}=\xi_k(w_\rho)(u_k-w_\rho)\in\mc N_{\la}$ for all $k$.
Since $\eta_{\rho} \in\mc N_{\la}$, we deduce from (\ref{eq33}) and
Taylor's expansion,
\begin{align}\label{eq:3.7}
\frac{1}{k}\|\eta_{\rho}-u_k\|&\geq J_{\la}(u_k)-J_{\la}(\eta_\rho)
=\ld J_{\la}^{\prime}(\eta_\rho),u_k-\eta_\rho\rd + o(\|u_k-\eta_\rho\|)\notag\\
&= (1-\xi_k(w_{\rho}))\ld J_{\la}^{\prime}(\eta_\rho),u_k\rd + \rho
\xi_k(w_{\rho})\ld J_{\la}^{\prime}(\eta_\rho),f\rd +
o(\|u_k-\eta_\rho\|).
\end{align}
We note that as $\rho \ra 0$, $\frac{1}{\rho}\|\eta_\rho -u_k\|=
\|u_k\ld\xi_{k}^{\prime}(0),f\rd -f\|.$ Now dividing \eqref{eq:3.7}
by $\rho$ and taking limit $\rho \ra 0$, by Lemma \ref{le31} and \eqref{liminf3.4}, we get
\begin{equation*}
\ld J_{\la}^{\prime}(u_k),f \rd\leq
\frac{1}{k}\left(\|u_k\|\|\xi^{\prime}_{k}(0)\|_{*}+1\right)\le
\frac{1}{k}\frac{C_4\|f\|}{K},
\end{equation*}
This completes the proof
of Proposition.\QED

\subsection{Existence of a local minimum of $J_{\la}$ on $\mc N_{\la}$, $\la\in(0,\la_0)$ }

\begin{Lemma}\label{zle34}
Let $\ba<\frac{n}{n-m}$ and let $\la$ satisfy (\ref{eq8}). Then there exists a function $u_{\la}\in \mc N_\la^{+}$ such that $\ds J_{\la}(u_{\la})=\inf_{u\in \mc N_{\la}\setminus\{0\}}J_{\la}(u)$.
\end{Lemma}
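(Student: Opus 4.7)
The plan is to use Ekeland's variational principle applied on $\mathcal{N}_\lambda$ to produce a Palais--Smale sequence at level $\theta_\lambda$, then exploit strict subcriticality of the exponent $\beta<\tfrac{n}{n-m}$ together with an $(S^+)$-type argument to upgrade weak to strong convergence, and finally use the fibering-map analysis to place the limit in $\mathcal{N}_\lambda^+$. By Lemma \ref{th1} and Lemma \ref{le33} we have $-C\lambda^{k/(k-1)}\leq \theta_\lambda<0$, and by (\ref{eq32})--(\ref{eq33}) together with Proposition \ref{pro1} we obtain a sequence $\{u_k\}\subset\mathcal{N}_\lambda\setminus\{0\}$ with $J_\lambda(u_k)\to\theta_\lambda$ and $\|J_\lambda'(u_k)\|_*\to 0$. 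Coercivity of $J_\lambda$ on $\mathcal{N}_\lambda$ yields boundedness, so up to a subsequence $u_k\rightharpoonup u_\lambda$ weakly in $W^{m,n/m}_0(\Omega)$, $u_k\to u_\lambda$ strongly in every $L^s(\Omega)$ with $s\in[1,\infty)$, and pointwise a.e.

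Since $\beta<\tfrac{n}{n-m}$, the map $u\mapsto e^{|u|^\beta}/|x|^\alpha$ is compact from $W^{m,n/m}_0(\Omega)$ into $L^1(\Omega)$; combined with the uniform $L^t$ bound on $|u_k|^{p+1}e^{|u_k|^\beta}/|x|^\alpha$ obtained (for $t$ slightly above $1$) from Theorem \ref{admo} applied to the bounded sequence $\{u_k\}$, a Vitali/uniform-integrability argument gives, for every $\phi\in W^{m,n/m}_0(\Omega)$,
\[\int_\Omega \frac{f(u_k)\,\phi}{|x|^\alpha}\,dx\to\int_\Omega \frac{f(u_\lambda)\,\phi}{|x|^\alpha}\,dx,\qquad \int_\Omega h(x)|u_k|^{q-2}u_k\,\phi\,dx\to\int_\Omega h(x)|u_\lambda|^{q-2}u_\lambda\,\phi\,dx.\]
Passing to the limit in (\ref{eq35}) gives $\int_\Omega h(x)|u_\lambda|^q\,dx\geq C_3>0$, so $u_\lambda\not\equiv 0$. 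Testing the Palais--Smale condition against $u_k-u_\lambda$ and using the compactness above yields
\[M(\|u_k\|^{n/m})\int_\Omega |\nabla^m u_k|^{\frac{n}{m}-2}\nabla^m u_k\cdot\nabla^m(u_k-u_\lambda)\,dx=o(1).\]
Subtracting the analogous term with $u_\lambda$ (which vanishes by weak convergence) and applying the inequality $|a-b|^{n/m}\leq 2^{n/m-2}(|a|^{n/m-2}a-|b|^{n/m-2}b)\cdot(a-b)$ together with $M(\|u_k\|^{n/m})\geq M_0>0$ forces $\nabla^m u_k\to\nabla^m u_\lambda$ in $L^{n/m}$, hence $u_k\to u_\lambda$ strongly in $W^{m,n/m}_0(\Omega)$. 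Consequently $J_\lambda(u_\lambda)=\theta_\lambda$, and passing to the limit in $J_\lambda'(u_k)\to 0$ shows $u_\lambda\in\mathcal{N}_\lambda$ solves the Euler--Lagrange equation.

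It remains to show $u_\lambda\in\mathcal{N}_\lambda^+$. Since $\mathcal{N}_\lambda^0=\{0\}$ (by the preceding lemma) and $u_\lambda\neq 0$, we have $u_\lambda\in\mathcal{N}_\lambda^+\cup\mathcal{N}_\lambda^-$. If $u_\lambda\in\mathcal{N}_\lambda^-$, then because $\int_\Omega h|u_\lambda|^q\,dx>0$ Lemma \ref{le5}(2) supplies $t_1(u_\lambda)<1=t_2(u_\lambda)$ with $t_1u_\lambda\in\mathcal{N}_\lambda^+$ and $J_\lambda(t_1u_\lambda)<J_\lambda(u_\lambda)=\theta_\lambda$, contradicting the definition of $\theta_\lambda$. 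Hence $u_\lambda\in\mathcal{N}_\lambda^+$, which completes the proof. The main obstacle is the strong convergence step, complicated by the nonlocal factor $M(\|u_k\|^{n/m})$; the key point is that the uniform lower bound $M_0>0$ allows this coefficient to be absorbed so that the standard monotonicity/strict-convexity argument of the $n/m$-polyharmonic operator goes through, and subcriticality of $\beta$ is what makes the nonlinear terms genuinely compact.
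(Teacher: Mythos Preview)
Your proof is correct and follows essentially the same approach as the paper: you extract a bounded Palais--Smale sequence via Ekeland's principle and Proposition~\ref{pro1}, exploit the subcriticality $\beta<\tfrac{n}{n-m}$ to make the exponential term compact, test $J_\lambda'(u_k)$ against $u_k-u_\lambda$ and combine the lower bound $M\ge M_0$ with the standard monotonicity inequality to upgrade to strong convergence, and then use the fibering map (Lemma~\ref{le5}) together with $\int_\Omega h|u_\lambda|^q>0$ to rule out $u_\lambda\in\mathcal N_\lambda^-$. The only cosmetic difference is that the paper invokes the compact Moser--Trudinger embedding directly rather than spelling out the uniform $L^t$ bound and a Vitali argument, but the substance is identical.
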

\proof Let $\{u_k\}$ be a minimizing sequence for $J_{\la}$ on $\mc
N_{\la}\setminus \{0\}$ satisfying (\ref{eq32}) and (\ref{eq33}).
Then $\{u_k\}$ is bounded in $W_{0}^{m,\frac{n}{m}}(\Om)$. Also there exists a subsequence of $\{u_k\}$ (still denoted by
$\{u_k\}$) and a function $u_\la$ such that $u_k \rightharpoonup
u_\la$ weakly in $W_{0}^{m,\frac{n}{m}}(\Om)$, $u_k\ra u_\la$ strongly in
$L^{\al}(\Om)$ for all $\al\geq 1$ and $u_k(x)\ra u_{\la}(x)$ a.e in
$\Om$. Also $\int_{\Om} h|u_k|^{q}dx \ra \int_{\Om} h|u_\la|^{q}dx$ and by the compactness of Moser-Trudinger imbedding for $\beta<\frac{n}{n-m}$, $\int_{\Om} \frac{f(u_k)(u_k-u_\la)}{|x|^\alpha}dx\ra 0$ as $k\ra\infty$. Then by Lemma \ref{pro1}, we have $\langle J_{\la}^{\prime}(u_k),(u_k-u_\la)\rangle \ra 0$. We conclude that
 \[ M(\|u_k\|^\frac{n}{m})\int_{\Om}|\na^m u_k|^{\frac{n}{m}-2}\nabla ^mu_k(\na^m u_k-\na^m u_\la)\ra 0.\]
On the other hand, using $u_k\rightharpoonup u_0$ weakly and boundedness of $M(\|u_k\|^\frac{n}{m})$,
{\small\[M(\|u_k\|^\frac{n}{m}) \int_{\Om} |\na^m u_\la|^{\frac{n}{m}-2} \na^m u_\la (\na^m u_k - \na^m u_\la)\ra 0\; \mbox{as}\; k\ra\infty.\]}
From above two equations and inequality \begin{align*}
|a-b|^l\leq 2^{l-2}(|a|^{l-2}a-|b|^{l-2}b)(a-b)\;\mbox{for all}\; a,b\in \mb R^n\;\mbox{and}\; l\geq 2,
\end{align*} we have
\[M(\|u_k\|^\frac{n}{m})\int_{\Om}|\na^m u_k- \na^m u_\la|^\frac{n}{m} \ra 0\;\mbox{as}\; k\ra\infty.\]
Since $M(t)\ge M_0$, we obtain $u_k\ra u$ strongly in $W^{m,\frac{n}{m}}_{0}(\Om)$ and hence $\|u_k\| \rightarrow \|u_0\|$ strongly as $k\ra\infty$. In particular, it follows that $u_{\la}$ solves $(\mathcal P_{\la})$ and hence $u_\la \in \mc N_{\la}$. Moreover,
$\ds\theta_\la \leq J_{\la}(u_\la)\leq \liminf_{k\ra
\infty}J_{\la}(u_k)=\theta_\la$. Hence $u_\la$ is a minimizer for
$J_{\la}$ on $\mc N_{\la}$.\\
\noi Using (\ref{eq35}), we have $\int_{\Om} h|u_\la|^{q}>0$. Therefore there exists $t_{1}(u_\la)$ such that
$t_{1}(u_{\la})u_\la \in \mc N_{\la}^{+}$. We now claim that
$t_{1}(u_\la)=1$ $(i.e.$ $u_\la \in \mc N_{\la}^{+})$. Suppose
$t_{1}(u_\la)<1$. Then $t_{2}(u_\la)=1$ and hence $u_\la \in \mc
N_{\la}^{-}$. Now $J_{\la}(t_{1}(u_\la)u_\la)\leq J_{\la}(u_\la)=
\theta_{\la}$ which is impossible, as $t_{1}(u_\la) u_\la \in \mc
N_{\la}$.\QED

\begin{Theorem}\label{zth3.6}
Let $\ba<\frac{n}{n-m}$ and let $\la$ be such that (\ref{eq8}) holds.
Then $u_\la \in \mc N_{\la}^{+}$ is also a
non-negative local minimum for $J_{\la}$ in $W^{m,\frac{n}{m}}_{0}(\Om)$.
\end{Theorem}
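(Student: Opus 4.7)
The proof strategy is to combine the global minimality of $u_\la$ on $\mc N_\la$ from Lemma \ref{zle34} with the fibering-projection map of Lemma \ref{le31}, in order to sandwich $J_\la(v)$ between $J_\la(u_\la)$ from below for every $v$ in a small $W^{m,\frac{n}{m}}_0(\Om)$-ball around $u_\la$. The non-negativity I will obtain by having worked throughout with the truncated nonlinearity $f(u)=u_+|u_+|^p e^{|u_+|^\beta}$ and $h(x)|u|^{q-2}u$ replaced by $h(x)u_+^{q-1}$: testing the weak equation against $u_\la^-$ then forces $u_\la\ge 0$ a.e.

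First I record that the sign condition $\int_\Om h(x)|u_\la|^q\,dx>0$ holds and persists locally. Indeed, the minimizing sequence $\{u_k\}$ satisfies the lower bound \eqref{eq35}, and the strong convergence $u_k\to u_\la$ in $W^{m,\frac{n}{m}}_0(\Om)$ established in the proof of Lemma \ref{zle34} passes this bound to the limit; continuity of $v\mapsto\int h|v|^q\,dx$ then transports positivity to a whole $\|\cdot\|$-neighborhood of $u_\la$. Next I invoke Lemma \ref{le31} at $u=u_\la$ to obtain $\e>0$ and a $C^1$ map $\xi:B(0,\e)\to\mb R$ with $\xi(0)=1$ and $\xi(w)(u_\la-w)\in\mc N_\la$. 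Since $u_\la\in\mc N_\la^+$ is defined by the strict inequality $\phi''_{u_\la}(1)>0$ and $w\mapsto\xi(w)(u_\la-w)$ is continuous into $\mc N_\la$, shrinking $\e$ guarantees $\xi(w)(u_\la-w)\in\mc N_\la^+$ for all $w\in B(0,\e)$.

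Setting $v:=u_\la-w$, I identify $\xi(w)$ with the number $t_1(v)$ produced in Lemma \ref{le5}(2): both are the unique small positive critical point of $\psi_v$ whose multiple of $v$ sits in $\mc N_\la^+$, and they agree at $w=0$ since $t_1(u_\la)=1$ (this equality was already used inside the proof of Lemma \ref{zle34} to place $u_\la$ in $\mc N_\la^+$). Moreover $v\mapsto t_2(v)$ is continuous by the same implicit-function-type argument applied to the critical equation $\psi_v'=0$ using the non-degeneracy $\psi_v'(t_2(u_\la))\ne 0$; combined with the strict inequality $t_2(u_\la)>t_1(u_\la)=1$ this yields $1\in[0,t_2(v)]$ for $\e$ small. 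Now Lemma \ref{le5}(2) gives $J_\la(t_1(v)v)\le J_\la(tv)$ for all $t\in[0,t_2(v)]$; evaluating at $t=1$,
\[
J_\la\bigl(\xi(w)(u_\la-w)\bigr)=J_\la(t_1(v)v)\le J_\la(v)=J_\la(u_\la-w).
\]

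Finally, since $\xi(w)(u_\la-w)\in\mc N_\la$ and $u_\la$ realizes $\theta_\la=\inf_{\mc N_\la}J_\la$ by Lemma \ref{zle34}, chaining the two inequalities yields
\[
J_\la(u_\la)\le J_\la\bigl(\xi(w)(u_\la-w)\bigr)\le J_\la(u_\la-w)\qquad\forall\, w\in B(0,\e),
\]
which is exactly the local minimum property of $u_\la$ in $W^{m,\frac{n}{m}}_0(\Om)$. The main obstacle of the proof is the identification/continuity step in the previous paragraph: matching the implicit function $\xi$ of Lemma \ref{le31} with the $t_1(\cdot)$ of Lemma \ref{le5}(2), and propagating the strict ordering $t_1(u_\la)<t_2(u_\la)$ to a neighborhood. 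Both rest on the non-degeneracy $\phi''_{u_\la}(1)>0$, which in turn is ensured by the smallness condition \eqref{eq8} on $\la$ together with $\mc N_\la^0=\{0\}$.
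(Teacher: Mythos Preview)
Your local-minimum argument is essentially the paper's: both invoke Lemma \ref{le31} to obtain a continuous projection $w\mapsto \xi(w)(u_\la-w)$ onto $\mc N_\la$, identify $\xi(w)$ with the $t_1(\cdot)$ of Lemma \ref{le5}(2), and then compare $J_\la(u_\la-w)$ with $J_\la(\xi(w)(u_\la-w))\ge\theta_\la=J_\la(u_\la)$ via the fiber-minimality at $t=1$. The only difference is cosmetic---the paper controls the ordering through continuity of $u\mapsto t_*(u)$ while you use continuity of $t_2$; since $t_1<t_*<t_2$, either route ensures $1\in[0,t_2(u_\la-w)]$ for $w$ small.

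For non-negativity, however, you depart from the paper and your outline has a genuine gap in the polyharmonic range $m\ge 2$. Testing the Euler--Lagrange equation against $u_\la^-$ requires $u_\la^-\in W^{m,\frac{n}{m}}_0(\Om)$, but the truncation $u\mapsto u^-$ is not stable in higher-order Sobolev spaces: for $m\ge 2$ the second and higher derivatives of $u^-$ pick up singular contributions on $\{u=0\}$, so $u_\la^-$ is in general not an admissible test function. Even formally, the principal term $\int_\Om |\nabla^m u_\la|^{\frac{n}{m}-2}\nabla^m u_\la\,\nabla^m(u_\la^-)\,dx$ carries no useful sign when $m\ge 2$, unlike the $m=1$ identity $\nabla u\cdot\nabla u^-=-|\nabla u^-|^2$. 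In addition, ``having worked throughout with the truncated nonlinearity'' is not the paper's setup: Lemmas \ref{le3}--\ref{zle34} are proved for the even functional $J_\la$, so you cannot swap the nonlinearity at this final stage without redoing the fibering analysis. The paper's route instead exploits the evenness of $J_\la$: it passes to $\tilde u_\la=t_1(|u_\la|)\,|u_\la|\ge 0$, uses $\psi_{|u_\la|}=\psi_{u_\la}$ to compare fibering maps and deduce $J_\la(\tilde u_\la)\le J_\la(|u_\la|)\le J_\la(u_\la)=\theta_\la$, and then reruns the local-minimum argument for $\tilde u_\la$.
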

\proof Since $u_\la \in \mc N^{+}_{\la}$, we have $t_{1}(u_{\la})=1
<t_*(u_{\la})$. Hence by continuity of $u\mapsto t_*(u)$, given
$\e>0$, there exists $\de=\de(\e)>0$ such that $1+\e< t_*(u_\la-w)$
for all $\|w\|<\de$. Also, from Lemma \ref{le33} we have, for $\de>0$
small enough, we obtain a $C^1$ map $t: \textbf{B}(0,\de)\lra \mb R^+$
such that $t(w)(u_\la-w)\in \mc N_{\la}$, $t(0)=1$. Therefore, for
$\de>0$ small enough we have $t_{1}(u_\la-w)=
t(w)<1+\e<t_*(u_\la-w)$ for all $\|w\|<\de$. Since $t_*(u_\la-w)>1$,
we obtain $J_{\la}(u_\la)<J_{\la}(t_{1}(u_\la-w)(u_\la-w))<J_{\la}(u_\la-w)$
for all $\|w\|<\de$. This shows that $u_\la$ is a local minimizer for
$J_{\la}$.\\
\noi Now we show that $u_\la$ is a non-negative local minimum for
$J_{\la}$ on $W^{m,\frac{n}{m}}_{0}(\Om)$. If $u_\la\geq 0$ then we are done,
otherwise, if $u_{\la}\not\geq 0$ then we take $\tilde{u_{\la}}=
t_1(|u_\la|)|u_{\la}|$ which is non negative function in $\mc
N_{\la}^{+}$. As $\psi_{u_{\la}}(t)= \psi_{|u_{\la}|}(t)$ so
$t_{*}(|u_{\la}|)=t_{*}(u_{\la})$ and $t_{1}(u_{\la})\leq
t_1(|u_\la|)$. Hence $ t_1(|u_\la|)\geq 1$. Then from Lemma \ref{le5} we have $J_{\la}(\tilde{u_{\la}}) \leq
J_{\la}(|u_{\la}|)\leq J_{\la}(u_{\la})$. Hence $\tilde{u_{\la}}$
minimize $J_{\la}$ on $\mc N_{\la}\setminus \{0\}$. Thus we can
proceed same as earlier to show that $\tilde{u_{\la}}$ is a local
minimum for $J_{\la}$ on $W^{m,\frac{n}{m}}_{0}(\Om).$\QED

\begin{Lemma}\label{zle03} Let $\beta<\frac{n}{n-m}$ and let $\la$ be such that \eqref{eq8} holds. Then
$J_{\la}$ achieve its minimizers on $\mc N_{\la}^{-}$.
\end{Lemma}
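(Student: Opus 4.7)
The plan is to mimic the strategy of Lemma \ref{zle34}, but restricting attention to the negative branch. Set $\theta_\lambda^- := \inf\{J_\lambda(u) : u \in \mc N_\lambda^-\}$. By Lemma \ref{th1}, $J_\lambda$ is bounded below on $\mc N_\lambda$, so $\theta_\lambda^- > -\infty$. Applying Ekeland's variational principle on $\mc N_\lambda^-$ as in Proposition \ref{pro1}, I obtain a minimizing sequence $\{u_k\} \subset \mc N_\lambda^-$ with $J_\lambda(u_k) \to \theta_\lambda^-$ and $\|J_\lambda'(u_k)\|_* \to 0$. Coercivity of $J_\lambda$ on $\mc N_\lambda$ yields that $\{u_k\}$ is bounded in $W_0^{m,n/m}(\Omega)$, so along a subsequence $u_k \rightharpoonup u_\lambda^-$ weakly, $u_k \to u_\lambda^-$ strongly in $L^s(\Omega)$ for every finite $s$, and $u_k \to u_\lambda^-$ a.e.\ in $\Omega$.

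Next, since $\beta < \frac{n}{n-m}$, the singular Adams--Moser embedding of Theorem \ref{admo} is compact, hence $\int_\Omega f(u_k)(u_k - u_\lambda^-)/|x|^\alpha\, dx \to 0$, and likewise $\int_\Omega h|u_k|^q\,dx \to \int_\Omega h|u_\lambda^-|^q\,dx$. Combining with $\langle J_\lambda'(u_k), u_k - u_\lambda^-\rangle \to 0$ gives
\[
M(\|u_k\|^{n/m})\int_\Omega |\na^m u_k|^{n/m-2}\na^m u_k (\na^m u_k - \na^m u_\lambda^-)\,dx \to 0,
\]
and the convexity argument used at the end of the proof of Lemma \ref{zle34} (together with $M(t)\ge M_0$) yields $u_k \to u_\lambda^-$ strongly in $W_0^{m,n/m}(\Omega)$. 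Passing to the limit in the equation then shows that $u_\lambda^-$ solves $(\mc P_\lambda)$, in particular $u_\lambda^- \in \mc N_\lambda$ whenever $u_\lambda^-\neq 0$.

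The main obstacle is to verify that $u_\lambda^- \in \mc N_\lambda^-$. First, I need $u_\lambda^-\not\equiv 0$. For any $u \in \mc N_\lambda^-$, combining $\phi_u''(1)<0$ with the Nehari identity $M(\|u\|^{n/m})\|u\|^{n/m} = \lambda\int h|u|^q + \int f(u)u/|x|^\alpha$ (and using $M(t)=a+bt$) gives
\[
\left(\tfrac{n}{m}-q\right) a\|u\|^{n/m} + \left(\tfrac{2n}{m}-q\right) b\|u\|^{2n/m} < \int_\Omega \frac{f'(u)u^2 + (1-q)f(u)u}{|x|^\alpha}\,dx.
\]
Since the right-hand side is controlled by $C\|u\|^{p+2}$ via the Moser--Trudinger estimate (exactly as in Step~1 of Lemma \ref{le3}), this forces $\|u\| \ge c_0 > 0$ uniformly on $\mc N_\lambda^-$; strong convergence then gives $\|u_\lambda^-\|\ge c_0$, hence $u_\lambda^-\neq 0$. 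Since $\mc N_\lambda^0=\{0\}$, we have $u_\lambda^- \in \mc N_\lambda^+ \cup \mc N_\lambda^-$. To exclude the positive branch, I invoke Lemma \ref{le5}(2): $u_k \in \mc N_\lambda^-$ means $t_2(u_k) = 1$ with $t_2$ continuous in $u$ (away from zero), so the strong convergence $u_k \to u_\lambda^-$ implies $t_2(u_\lambda^-) = 1$, giving $u_\lambda^- \in \mc N_\lambda^-$. Finally $J_\lambda(u_\lambda^-) = \lim_k J_\lambda(u_k) = \theta_\lambda^-$, which completes the proof.
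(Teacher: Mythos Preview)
Your argument is correct and follows essentially the same route as the paper: Ekeland on $\mc N_\lambda^-$, Palais--Smale via the analogue of Proposition~\ref{pro1}, then strong convergence using the compact subcritical Adams--Moser embedding exactly as in Lemma~\ref{zle34}. The paper is terser---it just asserts that $\mc N_\lambda^-$ is closed (via continuity of $t_2$) and that $v_\lambda\not\equiv0$ because $\mc N_\lambda^0=\{0\}$---whereas you spell out the uniform lower bound $\|u\|\ge c_0$ on $\mc N_\lambda^-$ and the continuity argument for $t_2$; these are precisely the details underlying the paper's assertions, not a different method.
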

\proof
We note that $\mc N_{\la}^{-}$ is a closed set, as $t^{-}(u)$
is a continuous function of $u$ and  ${J}_{\la}$ is bounded
below on $\mc N_{\la}^{-}$. Therefore, by Ekeland's Variational
principle, we can find a sequence $\{v_k\}\in \mc N_{\la}^{-}$ such
that
{\small\begin{align*}
{J}_{\la}(v_k)&\leq \inf_{u\in \mc
N_{\la}^{-}}{J}_{\la}(u) +\frac{1}{k},\;\;
{J}_{\la}(v)\geq{J}_{\la}(v_k)-\frac{1}{k}\|v-v_k\|
\;\mbox{for all}\; v\in \mc N_{\la}^{-}.
\end{align*}}
Then $\{v_k\}$ is a bounded sequence in $W^{m,\frac{n}{m}}_{0}(\Om)$ and
is easy to see that $v_k\in
\La\setminus \{0\}$. Thus by following the proof of Lemma \ref{pro1}, we get
$\|{J}_{\la}^{\prime}(v_k)\|_{*}\ra 0$ as $k\ra \infty$. Thus
following the proof as in Lemma \ref{zle34}, we have $v_\la\in\mc N_{\la}^{-}$, weak
limit of sequence $\{v_k\}$, is a solution of $(\mathcal P_{\la})$. And moreover $v_\la \not\equiv 0$, as $\mc
N^{0}_{\la}=\{0\}$.\QED

\noi{\bf Proof of Theorem \ref{tp2}: } Now the proof follows from Lemmas \ref{zle34} and \ref{zle03}.\QED

\noi To obtain the existence result in the critical case, we need the following compactness Lemma.
\begin{Lemma}\label{cpt}
Suppose $\{u_k\}$ be a sequence in $W^{m,\frac{n}{m}}_{0}(\Om)$ such that
\[ J_{\la}^{\prime}(u_k) \rightarrow 0\;\; J_{\la}(u_k) \rightarrow c < \frac{m}{2n}M_0\beta_{\al}^\frac{n-m}{m}- C \la^{\frac{p+2+\ba}{p+2-q+\ba}},\] where $C$ is a positive constant depending on $p$, $q$, $m$ and $n$.
Then there exists a strongly convergent subsequence.
\end{Lemma}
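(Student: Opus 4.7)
The plan is to mirror the compactness scheme from the proof of Theorem \ref{main}, adapted to the convex-concave functional $J_\lambda$. First I would establish boundedness of $\{u_k\}$: combining $J_\lambda(u_k)-\tfrac{m}{2n}\langle J_\lambda'(u_k),u_k\rangle = c + o(1) + o(\|u_k\|)$ with the identity $\tfrac{m}{n}\widehat{M}(t)-\tfrac{m}{2n}M(t)t = \tfrac{mat}{2n} \ge \tfrac{m M_0 t}{2n}$ valid for $M(t) = a+bt$, the nonnegativity up to a bounded error of $\tfrac{m}{2n}f(s)s-F(s)$ (which holds because $p+2>\tfrac{2n}{m}$ and $\alpha<n$), and H\"older's bound $\int h|u_k|^q \le C\|u_k\|^q$ with $q<n/m$, I obtain a uniform control on $\|u_k\|^{n/m}$. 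After extracting a subsequence with $u_k \rightharpoonup u_\lambda$ weakly in $W^{m,n/m}_0(\Omega)$, strongly in $L^r(\Omega)$ for every $r\in[1,\infty)$, and a.e., control of $\int f(u_k)u_k/|x|^\alpha$ via $\langle J_\lambda'(u_k),u_k\rangle\to 0$ activates Lemma \ref{lc1}, yielding $L^1$-convergence of the nonlinear terms. The concentration-compactness argument of Lemma \ref{IC2}, easily transferred to the present setting because $\lambda h|u|^{q-1}u$ is a compact perturbation when $q<n/m$, then gives $\nabla^m u_k \to \nabla^m u_\lambda$ a.e. and identifies $u_\lambda$ as a weak solution of $(\mathcal P_\lambda)$.

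Second, I would extract the sub-threshold energy estimate. Writing $\|u_k\|\to\rho_0$ and passing to the limit in $J_\lambda(u_k)\to c$ yields $\tfrac{m}{n}[\widehat{M}(\rho_0^{n/m})-\widehat{M}(\|u_\lambda\|^{n/m})] = c - J_\lambda(u_\lambda)$. Either $u_\lambda=0$ (so $J_\lambda(u_\lambda)=0$) or $u_\lambda \in \mc N_\lambda$, in which case Lemma \ref{th1} gives $J_\lambda(u_\lambda) \ge -C\lambda^{k/(k-1)}$ with $k=(p+2+\beta)/q$. Combining with the hypothesis on $c$ and the lower bound $\widehat{M}(\rho_0^{n/m}) - \widehat{M}(\|u_\lambda\|^{n/m}) \ge M_0(\rho_0^{n/m}-\|u_\lambda\|^{n/m})$ produces the decisive inequality
\[
\rho_0^{n/m}-\|u_\lambda\|^{n/m} \;<\; \tfrac{1}{2}\beta_\alpha^{(n-m)/m}.
\]
Setting $v_k = u_k/\|u_k\|$ and $v_\lambda = u_\lambda/\rho_0$ (or handling $u_\lambda=0$ directly via Theorem \ref{admo}), the above rewrites as $\rho_0^{n/m}(1-\|v_\lambda\|^{n/m}) < \beta_\alpha^{(n-m)/m}$, which allows me to invoke the singular higher-integrability Lemma \ref{pllion} and pick $q'>1$ such that $\sup_k \int_\Omega e^{q'|u_k|^{n/(n-m)}}/|x|^\alpha\,dx < \infty$.

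Finally, this uniform exponential integrability with H\"older gives $\int f(u_k)(u_k-u_\lambda)/|x|^\alpha\,dx \to 0$, and $q<n/m$ together with compact Sobolev embedding gives $\lambda\int h|u_k|^{q-2}u_k(u_k-u_\lambda)\,dx \to 0$; substituting into $\langle J_\lambda'(u_k),u_k-u_\lambda\rangle\to 0$ leaves $M(\|u_k\|^{n/m})\int|\nabla^m u_k|^{n/m-2}\nabla^m u_k\cdot\nabla^m(u_k-u_\lambda)\,dx \to 0$. Subtracting the analogous term with $u_\lambda$ in place of $u_k$ in the integrand (which tends to zero by the weak convergence $\nabla^m u_k \rightharpoonup \nabla^m u_\lambda$ in $L^{n/(n-m)}$ and boundedness of $M(\|u_k\|^{n/m})$), and applying the elementary inequality $|a-b|^{n/m}\le 2^{n/m-2}(|a|^{n/m-2}a-|b|^{n/m-2}b)\cdot(a-b)$ together with $M(\|u_k\|^{n/m})\ge M_0>0$, produces $\|u_k-u_\lambda\|\to 0$. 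The principal difficulty lies in the sub-threshold step: the compactness level $\tfrac{m}{2n}M_0\beta_\alpha^{(n-m)/m}$ must simultaneously absorb Lemma \ref{th1}'s lower bound on $J_\lambda(u_\lambda)$ (which itself hinges on identifying $u_\lambda\in\mc N_\lambda$ through concentration-compactness) and the Lions exponent for the singular weight $|x|^{-\alpha}$; once this calibration is in place, the remainder is standard.
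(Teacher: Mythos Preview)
Your overall scheme follows the proof of Theorem~\ref{main} rather than the paper's own argument for this lemma, and it contains a real gap in the sub-threshold step. When you pass to the limit in $\langle J_\lambda'(u_k),\phi\rangle\to 0$ using the weak convergence from Lemma~\ref{IC2}, the coefficient that survives is $M(\rho_0^{n/m})$, not $M(\|u_\lambda\|^{n/m})$; consequently $u_\lambda$ satisfies
\[
M(\rho_0^{n/m})\int_\Omega |\nabla^m u_\lambda|^{\frac{n}{m}-2}\nabla^m u_\lambda\,\nabla^m\phi\,dx
=\lambda\int_\Omega h|u_\lambda|^{q-2}u_\lambda\phi\,dx+\int_\Omega\frac{f(u_\lambda)}{|x|^\alpha}\phi\,dx,
\]
which is \emph{not} the equation $(\mathcal P_\lambda)$ unless $\rho_0=\|u_\lambda\|$. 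In particular $u_\lambda\notin\mc N_\lambda$ in general, so your dichotomy ``$u_\lambda=0$ or $u_\lambda\in\mc N_\lambda$'' is unjustified and Lemma~\ref{th1} cannot be invoked to obtain $J_\lambda(u_\lambda)\ge -C\lambda^{k/(k-1)}$. Without that lower bound the chain leading to $\rho_0^{n/m}-\|u_\lambda\|^{n/m}<\tfrac12\beta_\alpha^{(n-m)/m}$ collapses.

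The paper avoids this difficulty by never appealing to $J_\lambda(u_\lambda)$ at all. Instead it works directly with the defect measures $|\nabla^m u_k|^{n/m}\to\mu_1$ and $f(u_k)u_k/|x|^\alpha\to\mu_2$ and shows the dichotomy $\beta_i=0$ or $\beta_i\ge M_0\beta_\alpha^{(n-m)/m}$ for the atoms of $\mu_2$. One then computes
\[
c=\lim_{k\to\infty}\Bigl[J_\lambda(u_k)-\tfrac{m}{2n}\langle J_\lambda'(u_k),u_k\rangle\Bigr],
\]
uses Fatou on the nonnegative integrand $\tfrac{m}{2n}f(u_k)u_k-F(u_k)$ (which picks up the mass $\sum_i\beta_i$), the inequality \eqref{reqla}, and the H\"older estimate on $\int h|u|^q$, to obtain $c\ge\tfrac{m}{2n}M_0\beta_\alpha^{(n-m)/m}-C\lambda^{k/(k-1)}$ whenever some $\beta_i\ne0$; this contradicts the hypothesis on $c$. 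Your route can be repaired along the same lines---bound $c$ via $J_\lambda(u_k)-\tfrac{m}{2n}\langle J_\lambda'(u_k),u_k\rangle$ and Fatou rather than via $J_\lambda(u_\lambda)$---but as written the key step does not go through.
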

\proof
By Lemma \ref{bps}, there exists a subsequence of $\{u_k\}$, (say $\{u_k\}$ ) such that $u_k \rightarrow u$ in $L^\al(\Om)$ for all $\al$, $u_k(x) \rightarrow u(x)$ a.e. in $\Om$, $\na^m u_k(x) \rightarrow \na^m u(x)$ a.e. in $\Om$. As $\{u_k\}$ is a Palais- Smale sequence,
$|\na^m u_k|^{\frac{n}{m}-2}\na^m u_k$ is bounded in $L^\frac{n}{n-m}$ and $\frac{f(u_k)u_k}{|x|^\alpha}$ is bounded  $L^1(\Omega)$. So by concentration compactness lemma, $|\na^m u_k|^\frac{n}{m} \rightarrow \mu_1$, $\frac{f(u_k)u_k}{|x|^\alpha} \rightarrow \mu_2$ in measure.

\noi Let $B=\{ x\in \overline{\Om}: \; \exists\; r=r(x),\;  \mu_1(\textbf{B}_r\cap \Omega) < \beta_{\alpha}^\frac{n-m}{n}\}$ and let $A=\overline{\Om}\backslash B.$ Then as in Lemma \ref{IC2}, we can show that $A$ is finite set say $\{x_1, x_2,...x_m\}.$ Since $J_{\la}^{\prime} (u_k)\rightarrow 0$, we have
{\small\begin{align}
0=&\lim_{k\ra\infty} \langle J^{\prime}_{\la}(u_k), \phi\rangle = \lim_{k\ra\infty} M(\|u_k\|^\frac{n}{m}) \int_\Om |\na^m u_k|^{\frac{n}{m}-2} \na^m u_k \na^m \phi -\la \int_\Om h|u_k|^{q-2} u_k \phi dx\nonumber\\&\hspace{9cm}-\int_\Om \frac{f(u_k)}{|x|^\alpha} \phi dx\nonumber\\
0=&\lim_{k\ra\infty}\langle J^{\prime}_{\la}(u_k), u_k \phi\rangle = \lim_{k\ra\infty}M(\|u_k\|^\frac{n}{m}) \left(\sum_{l=1}^m\int_\Om |\na^m u_k|^{\frac{n}{m}-2} \na^m u_k \na^l \phi \nabla ^{m-l}u_k + \int_\Om |\na^m u_k|^\frac{n}{m}  \phi \right) \notag\\ &\hspace*{4cm}-\la\int_\Om h|u_k|^{q}\phi- \int_\Om \frac{f(u_k) u_k}{|x|^\alpha} \phi dx \label{a2}\\
0=&\lim_{k\ra\infty} \langle J^{\prime}_{\la} (u_k), u\phi\rangle= \lim_{k\ra\infty} M(\|u_k\|^\frac{n}{m})\left(\sum_{l=1}^m\int_\Om |\na^m u_k|^{\frac{n}{m}-2} \na^m u_k \na^{m-l} u\nabla^l\phi\right)\nonumber\\&+\lim_{k\ra\infty} M(\|u_k\|^\frac{n}{m})\left(\int_\Om |\na^m u_k|^{\frac{n}{m}-2}\na^m u_k \na^m u\phi\right)-\la \int_\Om h|u_k|^{q-2}u_ku \phi -\int_{\Om} \frac{f(u_k) u}{|x|^\alpha} \phi \label{a3}
\end{align}}
Using \eqref{usela}, \eqref{a3} in \eqref{a2} , we have
{\small \begin{equation}\label{a4}
\lim_{k\rightarrow \infty}\int_\Om \frac{f(u_k) u_k}{|x|^\alpha} \phi = \lim _{k\rightarrow \infty }M(\|u_k\|^\frac{n}{m})\int_\Om( |\na^m u_k|^\frac{n}{m} -  |\na^m u_k|^{\frac{n}{m}-2} \na^m u_k \na^m u) \phi +\int_\Om \frac{f(u) u}{|x|^\alpha} \phi
\end{equation}}
Now take cut-off function $\psi_\de \in C^{\infty}_{0}(\Om)$ such that $\psi_\de(x)\equiv1$ in $\textbf{B}_\de(x_j),$ and $\psi_\de (x)\equiv0$ in $\textbf{B}_{2\de}^{c}(x_j)$ with $|\psi_\de|\le 1$. Then taking $\phi=\psi_\de$,
{\small\[0\le \left|\int_\Om |\na^m u_k|^{\frac{n}{m}-2} \na^m u_k \na^m u \phi\right| \le \left(\int_{\Om} |\na^m u_k|^\frac{n}{m}\right)^{(n-m)/n} \left(\int_{\textbf{B}_{2\de}} |\na^m u|^\frac{n}{m}\right)^{m/n} \rightarrow 0 \;\; \text{as}\;\; \de \rightarrow 0.\]}
Hence from \eqref{a4}, we get
\begin{equation}\label{a5}
\int_\Om \phi d \mu_2 \ge M_0 \int_\Om \phi d\mu_1 + \int_\Om
\frac{f(u)u}{|x|^\alpha} \phi\;\mbox{as}\; \de\ra 0.
\end{equation}
Now as in Assertion 1 of Lemma \ref{IC2}, we can show that for any relatively compact set $K\subset \Om_\e$, where $\Om_\e = \Om\backslash \cup_{i=1}^{m} \textbf{B}_\de(x_i)$
\[\lim_{k\rightarrow \infty} \int_K \frac{f(u_k)u_k}{|x|^\alpha} \rightarrow \int_K \frac{f(u) u}{|x|^\alpha}.\]
Also taking  $0<\e<\e_0$ and $\phi\in C^\infty_c(\mb R^n)$ such that
$\phi\equiv1$ in $\textbf{B}_{1/2}(0)$ and $\phi\equiv0$ in $\bar\Om\setminus \textbf{B}_1(0)$.
Take {$\psi_\e=\ds\sum_{j=1}^r\phi\left(\frac{x-x_j}{\e}\right)$ in \eqref{a5}. Then $0\leq\psi_\e\leq 1$, $\psi_\e\equiv 0$ in
 $\bar\Om_\e=\bar\Om\setminus\cup_{j=1}^r \textbf{B}_\e(x_j)$,$\psi_\e\equiv 1$ in $\cup_{j=1}^r \textbf{B}_{\e/2}(x_j)$}
{\small\begin{align*}
\int_\Om \psi_\e d \mu_2 & =\lim_{\e\rightarrow 0}\left(\int_{\Om_\e} \psi_\e d \mu_2 + \sum_{i=1}^{r} \int_{\textbf{B}_{\e}\cap \Om} \psi_\e d\mu_2\right)
=\lim_{\e \rightarrow 0}\int_{\Om_\e} \frac{f(u) u}{|x|^\alpha} \psi_\e + \sum_{i=1}^{r} \ba_i \de_{x_i}\\
&=\int_\Om \frac{f(u) u}{|x|^\alpha} +\sum_{i=1}^{r} \ba_i \de_{x_i}.
\end{align*}}
Therefore, from \eqref{a5}, we get
\begin{equation*}
M_0 \int_\Om \psi_\e d\mu_1 \le \sum_{i=1}^{r} \ba_i \de_{x_i}.
\end{equation*}
Now choosing $\e \rightarrow 0$, we get
\[M_0\mu_1(A) \le \sum_{i=1}^{r} \ba_i .\]
Therefore from the definition of $A$, either $\ba_i=0$ or $\ba_i \ge M_0 \beta_{\alpha}^\frac{n-m}{m}$.
Now we will show that $\ba_i=0,$ for all $i$. Suppose not,
Now using $J_{\la}(u_k)\rightarrow c$ and \eqref{reqla}, we have
{\small\begin{align*}
c= &J_\la(u_k)- \frac{m}{2n} \langle J_{\la}^{\prime} (u_k), u_k\rangle \\
 = &\left(\widehat M(\|u_k\|^\frac{n}{m})-\frac{m}{2n} M(\|u_k\|^n)\|u_k\|^\frac{n}{m}\right)+ \int_\Om \frac{\left(\frac {m}{2n} f(u_k) u_k - F(u_k)\right)}{|x|^\alpha}
 \\
 &+\la\left(\frac{m}{2n}-\frac{1}{q}\right)\int_\Om h |u|^{q}dx\\
\ge & \frac{ M_0\beta_{\al}^\frac{n-m}{m}}{2n}+\int_\Om \frac{\left(\frac{m}{2n} f(u)u-F(u)\right)}{|x|^\alpha}dx + \la\left(\frac{m}{2n}-\frac{1}{q}\right)   \int_\Om h|u|^{q}dx.
\end{align*}}
\noi Therefore,
\begin{align*}
c\ge &\frac{m}{2n}M_0\beta_{\al}^\frac{n-m}{m} + \left(\frac{m}{2n}-\frac{1}{p+2}\right)\int_{\Om} \frac{|u|^{p+2+\ba}}{|x|^\alpha}dx + \la \left(\frac{m}{2n}-\frac{1}{q}\right)\int_\Om h |u|^{q}dx\\
\geq & \frac{m}{2n}M_0\beta_{\al}^\frac{n-m}{m} + \left(\frac{m}{2n}-\frac{1}{p+2}\right)\int_{\Om} \frac{|u|^{qk}}{|x|^\alpha}dx+ \la\left(\frac{m}{2n}-\frac{1}{q}\right)l^\frac{k-1}{k}\left(\int_\Om  \frac{|u|^{qk}}{|x|^\alpha}dx \right )^{\frac{1}{k}},
\end{align*}
where $k=\frac{p+2+\ba}{q}$. Now as in Theorem \ref{th1}, consider the global minimum of the function $\rho(y): \mb R^+
\lra \mb R$ defines as $$\rho(y)=\left(\frac{mp+2m-2n}{2n(p+2)}\right)y^k -
\frac{\la(2n-mq)l^{\frac{k-1}{k}}}{2nq} y.$$
Then $\rho(y)$ attains its minimum at $\left(\frac{\la(2n-mq)(p+2)l^{\frac{k-1}{k}}}{kq(mp+2m-2n)}\right)^{\frac{1}{k-1}}$. So,
and its minimum value is  {\small$-C(p,q,n,m)\la^{\frac{k}{k-1}}$, where $C(p,q,n,m)=
\left(\frac{1}{k^{\frac{1}{k-1}}}-\frac{1}{k^{\frac{k}{k-1}}}\right)
\frac{l(p+2)^{\frac{1}{k-1}}(2n-mq)^{\frac{k}{k-1}}}{n(pm+2m-2n)^{\frac{1}{k-1}}(q)^{\frac{k}{k-1}}}>0.$
Therefore, $c\ge \frac{m}{2n}M_0\beta_{\al}^\frac{n-m}{m} - C(p,q,n,m)\la^{\frac{p+2+\ba}{p+2-q+\ba}}$ .\QED

\noi Let $\la_{00}=\max \{ \la: \; \theta_{\la} \le \frac{m}{2n}M_0\beta_{\al}^\frac{n-m}{m}- C \la^{\frac{p+2+\ba}{p+2-q+\ba}}\}$ where $C=C(p,q,n,m)$ is as in the above Lemma.\\

\noi{\bf Proof of Theorem \ref{tp1}:} Let $\{u_k\}$ be a minimizing sequence for $J_{\la}$ on $\mc
N_{\la}\setminus \{0\}$ satisfying (\ref{eq33}) {and $\lambda^0=\min\{\lambda_0, \lambda_{00}\}$.}
Then it is easy to see that $\{u_k\}$ is a bounded sequence in $W_{0}^{m,\frac{n}{m}}(\Om)$.
Also there exists a subsequence of $\{u_k\}$ (still denoted by
$\{u_k\}$) and a function $u_\la$ such that $u_k \rightharpoonup
u_\la$ weakly in $W_{0}^{m,\frac{n}{m}}(\Om)$, $u_k\ra u_\la$ strongly in
$L^{\al}(\Om)$ for all $\al\geq 1$ and $u_k(x)\ra u_{\la}(x)$ a.e in
$\Om$. Then by Lemma \ref{pro1}, we have $\langle J_{\la}^{\prime}(u_k),(u_k-u_\la)\rangle \ra 0$.

\noi Now by compactness Lemma \ref{cpt}, $u_k \rightarrow u_\la$ strongly in $W^{m,\frac{n}{m}}_{0}(\Om)$ and hence $\|u_k\|\rightarrow \|u_\la\|$ strongly as $k\rightarrow \infty$. In particular, it follows that $u_\la$ solves $(\mathcal P_{\la})$ and hence $u_\la \in \mc N_{\la}.$ Also we can show similarly as in Lemma \ref{zle34} and Theorem \ref{zth3.6} that $u_{\la} \in \mc N_{\la}^{+}$ is a non-negative local minimizer of $J_{\la}$ in $W^{m,\frac{n}{m}}_{0}(\Om)$.\QED

\end{document}